\documentclass[11pt]{amsart}
\usepackage{amsmath,amsthm,amssymb,mathtools,yhmath}
\usepackage{graphicx,adjustbox}
\usepackage{stmaryrd}
\usepackage{amsfonts}
\usepackage{enumerate, url}
\usepackage[margin=1in]{geometry}
\usepackage{caption}
\usepackage{algorithm}
\usepackage{algpseudocode}
\usepackage{mleftright}
\usepackage{hyperref}
\usepackage{tikz}
\usepackage{tabu}
\usepackage{float}
\usepackage{indentfirst}
\usepackage{mathdots} 
\usepackage[all]{xy}
\usepackage{subfigure}
\usepackage{tikz-cd}  
\usetikzlibrary{positioning}
\usepackage{xcolor}
\hypersetup{
    colorlinks,
    linkcolor={red!50!black},
    citecolor={blue!50!black},
    urlcolor={blue!80!black}
}
\usepackage{hyperref}
\usepackage{tabu}
\usepackage{adjustbox}

\newtheorem{theorem}{Theorem}[section]
\newtheorem{proposition}[theorem]{Proposition}
\newtheorem{proposition/definition}[theorem]{Proposition/Definition}
\newtheorem{lemma}[theorem]{Lemma}
\newtheorem{corollary}[theorem]{Corollary}

\theoremstyle{definition}

\newtheorem{example}[theorem]{Example}

\newtheorem{problem}[theorem]{Problem}

\theoremstyle{remark}

\newtheorem{remark}[theorem]{Remark}


\newcommand{\Rmnum}[1]{\expandafter\@slowromancap\Romannumeral #1@} 
\newcommand{\tp}{{\scriptscriptstyle\mathsf{T}}}

\let\O\undefined

\DeclareMathOperator{\O}{O}

\DeclareMathOperator{\Vol}{Vol}

\DeclareMathOperator{\hess}{Hess}

\DeclareMathOperator{\End}{End}

\DeclareMathOperator{\rank}{rank}
\DeclareMathOperator{\diag}{diag}

\DeclareMathOperator{\sign}{sign}

\DeclareMathOperator{\grad}{grad}
\DeclareMathOperator{\Hess}{Hess}
\DeclareMathOperator{\spa}{span}
\DeclareMathOperator{\Det}{Det}
\begin{document}
\title{The sparseness of g-convex functions}
\author[Y.~Wang]{Yu Wang}
\address{KLMM, Academy of Mathematics and Systems Science, Chinese Academy of Sciences, Beijing 100190, China}
\email{wangyu2020@amss.ac.cn}
\author[K.~Ye]{Ke Ye }
\address{KLMM, Academy of Mathematics and Systems Science, Chinese Academy of Sciences, Beijing 100190, China}
\email{keyk@amss.ac.cn}
\date{\today}
\begin{abstract}
The g-convexity of functions on manifolds is a generalization of the convexity of functions on $\mathbb{R}^n$.  It plays an essential role in both differential geometry and non-convex optimization theory.  This paper is concerned with g-convex smooth functions on manifolds.   We establish criteria for the existence of a Riemannian metric (or connection) with respect to which a given function is g-convex.  Using these criteria,  we obtain three sparseness results for g-convex functions: \emph{(1)} The set of g-convex functions on a compact manifold is nowhere dense in the space of smooth functions.  \emph{(2)} Most polynomials on $\mathbb{R}^n$ that is g-convex with respect to some geodesically complete connection has at most one critical point.  \emph{(3)} The density of g-convex univariate (resp.  quadratic,  monomial,  additively separable) polynomials asymptotically decreases to zero.  
\end{abstract}
\maketitle
\section{Introduction}
Convex functions on Euclidean spaces lie at the core of various branches of mathematics.  The study of such functions ranges over optimization theory \cite{rockafellar-1970a,boyd2004,NY2018},  convex geometry \cite{MR1242973,HDW},  functional analysis \cite{Fenchel49,  KZ05},  combinatorics \cite{Kalai95, Grunbaum03} and algebraic geometry \cite{CLS11,MS15}.  On a manifold,  convex functions are generalized to g-convex functions and they play an increasingly important role in both pure and applied mathematics.  For instance,  the existence of a non-constant g-convex function on a Riemannian manifold governs its global geometry and topology \cite{BO69,CG72,Yau74}.  The g-convexity of the norm function on a Lie group is essential in geometric invariant theory and algebraic complexity theory  \cite{KN79,BFGOWW19}.  In the context of Riemannian optimization \cite{EAS99,Ab2007,YWL22,BN2023},  g-convex functions were first studied 30 years ago \cite{Rapcsak91,UC1994}.  However,  only recently has the community become aware of their great importance \cite{Wiesel12,Bavcak14,SH15,SVY18}.  The key observation is: \emph{functions that are not convex can be g-convex}.  Examples include:
\begin{itemize}
\item[$\diamond$] Rosenbrock banana \cite[Section~3.6]{UC1994}: $f(x)= a(x_2-x_1^2)^2+(b-x_1)^2$ defined on $\mathbb{R}^2$,  where $a,b$ are positive real numbers.
\item[$\diamond$] Reformulation of Brascsamp-Lieb function \cite{SH15,SVY18}: $f(X) = \sum_{i=1}^n a_i \log \det (A_j X A_j^\tp) - \log \det (X)$ defined on the cone $\mathsf{S}^2_{++}(\mathbb{R}^q)$ consisting of $q \times q$ positive definite matrices,  where $A_1,\dots,  A_n$ are $p \times q$ matrices and $a_1,\dots,  a_n$ are positive real numbers.
\item[$\diamond$] Logarithm of a positive polynomial \cite{SV17}: $f(x) = \log p(x_1,\dots,  x_n) $ defined on $(0,\infty)^n$,  where $p$ is a polynomial with non-negative coefficients.
\item[$\diamond$] Karcher mean \cite{SH15}:  $f(X) = \sum_{i=1}^p \lVert \log (X^{-1/2} A_i X^{-1/2}) \rVert_F^2$ defined on $\mathsf{S}^2_{++}(\mathbb{R}^q)$,  where $A_1$,$\dots$,  $A_p$ are $q\times q$ positive definite matrices.
\item[$\diamond$] The orbit norm \cite{KN79,BFGOWW19}: $f(t) =  \lVert t \cdot v \rVert^2$ on  torus $T$,  where $v$ is a vector in some representation $\mathbb{V}$ of $T$.
\end{itemize}
Thus,  a non-convex optimization problem may be efficiently solved by algorithms developed for convex optimization,  as long as the objective function is g-convex\cite{ZW13,ZS2016,LSCJ17}.  Guided by this principle,  the following fundamental problem arises naturally in the literature. See,  for example,  \cite{pp05},  \cite[Chapter~4]{UC1994} and \cite[Section~5.4]{Nisheeth18}.
\begin{problem}\label{prob} 
Given a function on a manifold,  can we prove the existence/non-existence of a Riemannian metric,  such that the function is g-convex?
\end{problem}
As mentioned in \cite[Section~5.4]{Nisheeth18},  it is relatively easy to verify the g-convexity of a function if the Riemannian metric is specified .  However,  Problem~\ref{prob} is much more difficult as there are infinitely many Riemannian metrics on a given manifold.  For instance,  each smooth function $\lambda$ on $\mathbb{R}$ determines a Riemannian metric $e^{\lambda(x)} dx^2$ on $\mathbb{R}$.  Some attempts were made to solve Problem~\ref{prob},  but only a few stringent sufficient conditions were obtained \cite{UC1994,pp05,pripoae2013generalized,Nisheeth18}.
\subsection*{Main results}
This paper consists of two parts.  The first part is concerned with Problem~\ref{prob} in general.  We prove two necessary conditions (cf. ~Propositions \ref{prop:necessary condition} and \ref{prop:nec-cond2}) and two sufficient conditions (cf.  Propositions~\ref{thm:sufficient0} and \ref{sufficient1}) for a smooth function to be g-convex with respect to some connection.  As a consequence of our criteria,  we are able to partially address Problem~\ref{prob} (cf.~Theorem~\ref{thm:sparseness}).  In particular,  our result reveals that under some assumptions,  \emph{most functions can not be g-convex with respect to any Riemannian metric}.  Since a connection is not necessarily associated to a (pseudo-)Riemannian metric,  we further establish criteria (cf.~Corollary~\ref{sufficient_condition_for_LC} and Proposition~\ref{suffient_necessary_condition_for_LC}) to check when this indeed occurs.  

The second part of the paper concentrates on Problem~\ref{prob} for polynomial functions on $\mathbb{R}^n$.  There are three reasons for us to restrict our considerations.  Firstly,  by Hopf-Rinow Theorem \cite[Theorem~6.13]{lee1997},  any g-convex function on a compact Riemannian manifold must be constant.  This solves Problem~\ref{prob} for compact manifolds.  Unfortunately,  even the existence of a non-constant g-convex function on a non-compact Riemannian manifold remains a mystery to differential geometers for over 50 years \cite{BO69,Wu71,CG72,Yau74},  not to mention Problem~\ref{prob}.  The former problem can be solved for manifolds of positive sectional curvature \cite{GW74,GW76},  since such a manifold is diffeomorphic to some Euclidean space \cite{GM69}.  Thus,  it is reasonable to investigate Problem~\ref{prob} for $\mathbb{R}^n$.  Secondly,  equipping $\mathbb{R}^n$ with a Riemannian metric is an instructive approach to analyze algorithms for convex optimization problems \cite{NT02,LV17,LV18}.  Thirdly,  polynomials are arguably the most commonly used functions in optimization theory and related fields \cite{Edwards85,Nesterov00,Lasserre00,Jolliffe02,CLMW11}.  Using the criteria we established in the first part,  we completely characterize g-convex univariate (resp.  quadratic,  monomial,  additively separable) polynomials in Theorem~\ref{snc_for_dim1} (resp.  \ref{condition_of_degree_two},  \ref{monomial},  \ref{snc_for_seperable_case}).  Moreover,  we estimate the density of such polynomials in an appropriate sense (cf.~Theorems~\ref{prob_dim_1},  \ref{theorem for quadratic polynomial},  \ref{thm:DD3} and \ref{snc_for_seperable_case_univariate}).  Surprisingly,  \emph{the density of such polynomials asymptotically decreases to zero,  although they consist of a full-dimensional subset}.
\section{Preliminaries}
\subsection{Notations}
First we fix some notations that will be frequently used in the sequel.  We denote 
\[
\mathsf{S}^2(\mathbb{R}^n) \coloneqq \lbrace
X \in \mathbb{R}^{n\times n}:  X^\tp = X
\rbrace,\quad 
\mathsf{S}^2_+(\mathbb{R}^n) \coloneqq \lbrace
X \in \mathsf{S}^2 (\mathbb{R}^n):  X \succeq 0
\rbrace.
\]
Let $\mathbb{R}[x_1,\dots,x_n]$ be the space of polynomials in $n$ variables.  We denote by $\mathbb{R}[x_1,\dots,x_n]_{\leq d}$ the subset of $\mathbb{R}[x_1,\dots,x_n]$ consisting of all polynomials of degree at most $d$.  We define 
\[
A_{n,d} \coloneqq \lbrace
f \in \mathbb{R}[x_1,\dots,  x_n]_{\le d}: \text{$f$ is convex with respect to some connection}
\rbrace.
\]

For a smooth manifold $\mathcal{M}$ and $x\in \mathcal{M}$,  we denote by $\mathbb{T}_x \mathcal{M}$ (resp.  $\mathbb{T} \mathcal{M}$) the tangent space (resp.  tangent bundle) of $\mathcal{M}$.  Moreover,  the space of all smooth maps from $\mathcal{M}$ to another smooth manifold $\mathcal{N}$ is denoted by $\mathcal{C}^{\infty}(\mathcal{M},  \mathcal{N})$.  If in particular that $\mathcal{N} = \mathbb{R}$,  we abbreviate $\mathcal{C}^{\infty}(\mathcal{M},  \mathcal{N})$ as $\mathcal{C}^{\infty}(\mathcal{M})$.
\subsection{Differential geometry}
We provide some basics of differential geometry in this subsection and refer interested readers to standard references \cite{Lee2003,  DC1976,  lee1997,  KN1996vol1} for more details.  
\subsubsection{Connection} Let $\mathcal{M}$ be a smooth manifold and let $\mathfrak{X}(\mathcal{M})$ be the space of vector fields on $M$.  A \emph{connection} on $\mathcal{M}$ is a smooth map 
\[
\nabla: \mathfrak{X}(\mathcal{M}) \times\mathfrak{X}(\mathcal{M}) \to \mathfrak{X}(\mathcal{M}),\quad \nabla_{X} Y \coloneqq \nabla(X,Y)
\]
such that for any  $f_1,f_2,  f\in \mathcal{C}^{\infty}(\mathcal{M})$,  $X_1,X_2,Y\in \mathfrak{X}(\mathcal{M})$ and $a\in \mathbb{R}$,  we have
\begin{align*}
\nabla_{f_1X_1+f_2X_2}Y &=f_1\nabla_{X_1}Y+f_2\nabla_{X_2}Y,\quad \nabla_{X}(aY)=a\nabla_{X}Y,  \\
\nabla_{X}(fY) &=X(f)Y+f\nabla_{X}Y,\quad \nabla_X Y-\nabla_Y X=[X,Y].
\end{align*}
Any connection $\nabla$ is naturally extended for differential forms and higher order tensor fields on $\mathcal{M}$.  Thus,  for each integer $k \ge 1$,  it makes sense to define $\nabla^{k}$ as the operator obtained by applying $\nabla$ $k$-times.
\begin{lemma}[gluing lemma]\cite[proposition4.5]{lee1997}\label{lem:gluing}
Let $\{U_i\}_{i\in I}$ be an open covering of $\mathcal{M}$ and let $\{\rho_i\}_{i\in I}$ be a partition of unity subordinate to $\{U_i\}_{i\in I}$.  Suppose that for each $i\in I$,  $\nabla_i$ is a connection on $U_i$.  Then $\nabla:=\sum \rho_i \widetilde{\nabla}_i$ is a connection on $M$,  where for each $i\in I$,  $\widetilde{\nabla}_i$ is defined by 
\[
\widetilde{\nabla}_i (X,  Y)_p = \begin{cases}
\nabla_i (X,  Y)_p &\text{~if~}p\in U_i\\
0&\text{~otherwise~}
\end{cases}.
\]
\end{lemma}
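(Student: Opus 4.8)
The statement asserts that $\nabla=\sum_i\rho_i\widetilde\nabla_i$ satisfies the four defining axioms of a connection recorded in the preliminaries (including the torsion-free identity $\nabla_XY-\nabla_YX=[X,Y]$, which this paper folds into the definition), together with the implicit requirement that $\nabla(X,Y)\in\mathfrak{X}(\mathcal M)$. The plan is to verify each axiom pointwise. The organizing principle throughout is that although each individual $\widetilde\nabla_i$ is \emph{not} a connection on all of $\mathcal M$, the normalization $\sum_i\rho_i\equiv 1$ of a partition of unity repairs its defects exactly.

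First I would check well-definedness and smoothness. Since $\operatorname{supp}\rho_i\subset U_i$, the field $\rho_i\widetilde\nabla_i(X,Y)$ equals $\rho_i\,\nabla_i(X|_{U_i},Y|_{U_i})$ on $U_i$ and vanishes on the open set $\mathcal M\setminus\operatorname{supp}\rho_i$. These two open sets cover $\mathcal M$ and the two expressions agree on the overlap $U_i\setminus\operatorname{supp}\rho_i$ (where $\rho_i=0$), so each summand is a globally defined smooth vector field. By local finiteness of the partition of unity the sum is, near every point, a finite sum of smooth vector fields, hence $\nabla(X,Y)\in\mathfrak{X}(\mathcal M)$.

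Next, the two tensorial axioms—$\mathcal C^\infty$-linearity in the first argument and $\mathbb R$-linearity in the second—I expect to pass through the combination with no trouble. Each $\widetilde\nabla_i$ already satisfies these identities pointwise (on $U_i$ they are inherited from $\nabla_i$, and off $U_i$ both sides vanish identically), and each identity is preserved under multiplication by the function $\rho_i$ and under summation. So these require only routine bookkeeping.

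The main obstacle is the Leibniz rule $\nabla_X(fY)=X(f)Y+f\nabla_XY$, together with the analogous torsion-free identity; this is precisely where $\widetilde\nabla_i$ fails to be a connection. Indeed, for $p\notin U_i$ one has $\widetilde\nabla_i(X,fY)_p=0$ while $X(f)(p)Y_p+f(p)\widetilde\nabla_i(X,Y)_p=X(f)(p)Y_p$, so an anomalous term $X(f)Y$ is present. However, in $\sum_i\rho_i(p)\,\widetilde\nabla_i(X,fY)_p$ only indices with $p\in\operatorname{supp}\rho_i\subset U_i$ contribute, and for those the Leibniz rule of $\nabla_i$ applies; collecting terms yields $\big(\sum_i\rho_i(p)\big)X(f)(p)Y_p+f(p)\sum_i\rho_i(p)\widetilde\nabla_i(X,Y)_p$, which by $\sum_i\rho_i(p)=1$ becomes exactly $X(f)(p)Y_p+f(p)\nabla(X,Y)_p$. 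The torsion-free identity follows by the same mechanism: for each $i$ with $p\in U_i$ one has $\widetilde\nabla_i(X,Y)_p-\widetilde\nabla_i(Y,X)_p=[X,Y]_p$, so the weighted sum returns $\big(\sum_i\rho_i(p)\big)[X,Y]_p=[X,Y]_p$. Thus all four axioms hold, and the single fact $\sum_i\rho_i=1$ is what does the essential work in the two non-tensorial axioms.
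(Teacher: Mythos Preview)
The paper does not give its own proof of this lemma; it simply cites \cite[Proposition~4.5]{lee1997} and states the result without argument. Your proof is correct and is precisely the standard verification one would find in that reference: check well-definedness and smoothness via $\operatorname{supp}\rho_i\subset U_i$ and local finiteness, observe that the $\mathcal{C}^\infty$- and $\mathbb{R}$-linearity axioms pass termwise, and then use $\sum_i\rho_i=1$ to recover the Leibniz rule and the torsion-free identity from the corresponding properties of each $\nabla_i$. There is nothing to compare here, since the paper defers entirely to the cited source.
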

The \emph{curvature tensor} of $\nabla$ is defined as
\[
R: \mathfrak{X}(\mathcal{M})  \times  \mathfrak{X}(\mathcal{M}) \times  \mathfrak{X}(\mathcal{M}) \to \mathfrak{X}(\mathcal{M}),\quad 
R(X,Y,Z):=\nabla_X\nabla_Y Z-\nabla_X\nabla_Y Z-\nabla_{[X,Y]}Z.
\]
In particular,  for any $X,  Y \in \mathfrak{X}(\mathcal{M})$,  we have a linear map 
\[
R(X,Y,\cdot):  \mathfrak{X}(\mathcal{M})  \to \mathfrak{X}(\mathcal{M}),\quad 
Z \mapsto R(X,Y,Z).
\]
Let $\gamma: (0,1) \to \mathcal{M}$ be a smooth curve on $\mathcal{M}$.  If $\nabla_{\dot{\gamma}(t)}\dot{\gamma}(t) = 0$,  then $\gamma$ is called a \emph{geodesic},  where $\dot{\gamma}$ denotes the tangent vector field of $\gamma$.  For each $f\in \mathcal{C}^{\infty}(\mathcal{M})$,  the \emph{Hessian} of $f$ with respect to $\nabla$ is 
\[
\Hess_{\nabla} f: \mathfrak{X}(\mathcal{M})  \times  \mathfrak{X}(\mathcal{M}) \to \mathcal{C}^{\infty}(\mathcal{M}),\quad \Hess_{\nabla} f(X,Y) \coloneqq X (Y(f)) - (\nabla_X Y) (f).
\]
\subsubsection{Riemannian geometry}
If $(\mathcal{M}, \mathsf{g})$ is a pseudo-Riemannian manifold,  the \emph{Levi-Civita connection} of $\mathsf{g}$ is the unique connection $\nabla$ on $\mathcal{M}$ such that 
\[
Z(\mathsf{g}(X,Y))= \mathsf{g} ( \nabla_Z X,Y )+ \mathsf{g} (X,\nabla_Z Y)
\]
for any $X,Y,Z \in \mathfrak{X}(\mathcal{M})$.  To indicate the dependence of $\nabla$ on $\mathsf{g}$,  we decorate it as $\nabla_{\mathsf{g}}$.   Given $f\in \mathcal{C}^{\infty}(\mathcal{M})$,  the \emph{pseudo-Riemannian gradient} of $f$ is the unique $\grad_{\mathsf{g}} f \in \mathfrak{X}(\mathcal{M})$ such that 
\[
\mathsf{g}( \grad_{\mathsf{g}} f ,  X) = X(f)
\]
for any $X \in \mathfrak{X}(\mathcal{M})$.  The \emph{pseudo-Riemannian Hessian} of $f$ is $\Hess_{\nabla_{\mathsf{g}}} f$, abbreviated as $\Hess_{\mathsf{g}}f$. Moreover,  if we equip $\mathbb{R}^n$ with the standard Euclidean metric,  then we denote by $\nabla_{\mathsf{e}}$,  $\grad_{\mathsf{e}} f$ and $\Hess_{\mathsf{e}} f$ the corresponding connection,  gradient and Hessian,  respectively.  In this case,  $\grad_{\mathsf{e}} f$ and $\Hess_{\mathsf{e}} f$ coincide with their counterparts in calculus.

\subsubsection{Calculations in a local chart}
Let $\mathcal{M}$ be an $n$-dimensional manifold with a connection $\nabla$.  In a local chart $x = (x_1,\dots,  x_n):\mathbb{R}^n \to U$ of $\mathcal{M}$,  there exist $\Gamma^k_{ij} \in \mathcal{C}^{\infty}(\mathcal{M})$,  $1 \le i,j,k \le n$,  which are called the \emph{Christoffel symbols} of $\nabla$,  such that 
\[
\nabla_{\frac{\partial}{\partial x_i}} \frac{\partial}{\partial x_j} = \sum_{k=1}^n \Gamma^k_{ij}  \frac{\partial}{\partial x_k}.
\]
According to \cite[Lemma~4.4]{lee1997},  $\nabla$ is uniquely determined by $\Gamma^k_{ij}$'s.  Clearly,  $\Gamma^k_{ij} = \Gamma^k_{ji}$ for all $1 \le i,j,k, \le n$.

The \emph{curvature tensor} $R$ of $\nabla$ in $U$ is determined by 
\begin{equation}\label{eq_for_curvature1}
R \left( \frac{\partial }{\partial x_i},\frac{\partial }{\partial x_j},\frac{\partial }{\partial x_k} \right) = \sum_{l=1}^n R_{ijk}^l\frac{\partial }{\partial x_l},\quad
    R_{ijk}^l \coloneqq \frac{\partial \Gamma_{jk}^l}{\partial x_i}-\frac{\partial \Gamma_{ik}^l}{\partial x_j}+\sum_{t=1}^n (\Gamma_{jk}^t\Gamma_{it}^l-\Gamma_{ik}^t\Gamma_{jt}^l)
\end{equation}
for $1 \le i,j,k \le n$.  

For each $f\in \mathcal{C}^{\infty}(U)$,  we may represent $\Hess_{\nabla} f$ as an $n\times n$ matrix whose $(i,j)$-th element is $\Hess_\nabla f (\partial /\partial x_i, \partial/\partial x_j)
$,  $1 \le i,j \le n$.  
\begin{lemma}\cite[Section~1.3]{UC1994}\label{lem:Hessian translation}
Let $\nabla$ be a connection on an open subset $U\subseteq \mathbb{R}^n$ and let $\{\Gamma_{ij}^k: 1 \le i,j,k \le n\}$ be the corresponding Christoffel symbols.  For any $f\in \mathcal{C}^{\infty}(\mathbb{R}^n)$,  we have 
\[ 
\Hess_\nabla f \left( \frac{\partial }{\partial x_i},\frac{\partial }{\partial x_j} \right) = \frac{\partial^2 f}{\partial x_i \partial x_j} - \sum_k \Gamma_{ij}^k\frac{\partial f}{\partial x_k},\quad 1 \le i \le j \le n.
\]
\end{lemma}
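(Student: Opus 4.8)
The plan is to evaluate the defining expression for $\Hess_\nabla f$ directly on the coordinate vector fields $\partial/\partial x_i$ and $\partial/\partial x_j$ and then simplify the two resulting terms using the local description of $\nabla$ through its Christoffel symbols. Recall from the definition that $\Hess_\nabla f(X,Y) = X\big(Y(f)\big) - (\nabla_X Y)(f)$ for any $X,Y \in \mathfrak{X}(\mathcal{M})$. Setting $X = \partial/\partial x_i$ and $Y = \partial/\partial x_j$ reduces the claim to computing these two pieces separately, so the whole argument amounts to unwinding definitions.

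First I would treat the term $X\big(Y(f)\big)$. Since a coordinate vector field acts on a smooth function as the corresponding partial derivative, we have $(\partial/\partial x_j)(f) = \partial f/\partial x_j$, and applying $\partial/\partial x_i$ once more yields $\partial^2 f/(\partial x_i \partial x_j)$, which is the first summand of the claimed formula. Next I would expand the correction term $(\nabla_X Y)(f)$. By the definition of the Christoffel symbols, $\nabla_{\partial/\partial x_i}(\partial/\partial x_j) = \sum_{k} \Gamma^k_{ij}\,\partial/\partial x_k$. Applying this vector field to $f$, and using that each $\partial/\partial x_k$ differentiates $f$ to $\partial f/\partial x_k$ while the scalar coefficients $\Gamma^k_{ij} \in \mathcal{C}^\infty(\mathcal{M})$ factor out of the derivation, gives $(\nabla_{\partial/\partial x_i}\partial/\partial x_j)(f) = \sum_{k}\Gamma^k_{ij}\,\partial f/\partial x_k$. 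Subtracting this from the first term produces exactly the asserted identity.

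I do not expect a genuine obstacle here, as the computation is a routine application of the definitions; the only point deserving care is to keep straight that the connection enters solely through its action in the first argument, the Leibniz rule in the second slot of $\nabla$ playing no role since $Y = \partial/\partial x_j$ is a fixed coordinate field. The restriction to $1 \le i \le j \le n$ in the statement is harmless: both the mixed partials $\partial^2 f/(\partial x_i\partial x_j)$ and the Christoffel symbols $\Gamma^k_{ij}$ are symmetric in $i,j$ (the latter by the torsion-free condition $\nabla_X Y - \nabla_Y X = [X,Y]$ together with $[\partial/\partial x_i,\partial/\partial x_j]=0$, as already noted), so the formula in fact holds for every index pair.
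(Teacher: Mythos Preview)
Your argument is correct and is exactly the standard verification: evaluate the definition $\Hess_\nabla f(X,Y)=X(Y(f))-(\nabla_XY)(f)$ on coordinate fields and use $\nabla_{\partial/\partial x_i}\partial/\partial x_j=\sum_k\Gamma^k_{ij}\partial/\partial x_k$. The paper does not supply a proof of this lemma but merely cites \cite[Section~1.3]{UC1994}, so there is nothing to compare; your computation is precisely what that reference records.
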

If $\nabla = \nabla_{\mathsf{g}}$ for some pseudo-Riemannian metric $\mathsf{g}$,  then in $U$ we can write $\mathsf{g} =\sum_{i,j=1}^n \mathsf{g}_{ij} d x_i \otimes d x_j$ and the \emph{Riemannian gradient} of $f$ together with its derivatives can be calculated by
\begin{equation}\label{eq_LC_connection}
\grad_{\mathsf{g}} f =\sum_{i,j=1}^n  \frac{\partial f}{\partial x_i} \mathsf{g}^{ij}\frac{\partial }{\partial x_j}, \quad
    \frac{\partial \mathsf{g}_{ij}}{\partial x_k} =\sum_{l=1}^n  \Gamma_{ik}^lg_{jl}+\sum_{l=1}^n  \Gamma_{jk}^l \mathsf{g}_{il}. 
\end{equation}
where $(\mathsf{g}^{ij})_{i,j=1}^n$ is the inverse of the matrix $(\mathsf{g}_{ij})_{i,j=1}^n$.
\subsection{G-convex functions}
References for this subsection are \cite{Ab2007,BN2023,boyd2004,UC1994,rockafellar-1970a,NY2018}.  Let $\mathcal{M}$ be a manifold with a connection $\nabla$.  A subset $U\subseteq \mathcal{M}$ is called a \emph{g-convex subset} of $\mathcal{M}$ with respect to $\nabla$ if for any $x,y\in U$, the geodesic connecting $x,y$ (if it exists) is contained in $U$.  A function $f: U\to \mathbb{R}$ is \emph{g-convex} with respect to $\nabla$ if for any $x,y\in U$ with a geodesic $\gamma:[0,1]\to U$ such that $\gamma(0)= x$,  $\gamma(1) = y$,  the function $f\circ \gamma: [0,1] \to \mathbb{R}$ is convex in the usual sense.  If $\nabla = \nabla_{\mathsf{g}}$ for some pseudo-Riemannian metric $\mathsf{g}$,  then such $f$ is said to be \emph{g-convex} with respect to $\mathsf{g}$.  

Let $\mathcal{M}$ be a manifold with a connection $\nabla$.  The following theorem characterizes g-convex functions on $\mathcal{M}$ with respect to $\nabla$.
\begin{theorem}\cite[Subsections 3.5 \& 3.6]{UC1994}\label{thm: g-convex}
Assume $U$ is a g-convex subset of $\mathcal{M}$.  For $f \in \mathcal{C}^{\infty}(U)$,  the following are equivalent:
    \begin{enumerate}[(a)]
        \item $f$ is g-convex with respect to $\nabla$.
        \item $f(x)+\dot{\gamma}(f(x))\leq f(y)$ for any $x,y\in U$,  where $\gamma$ is the geodesic connecting $x$ and $y$.  \label{thm: g-convex:item2}
        \item $\Hess_\nabla f(x)\succeq 0$ for all $x\in U$. \label{thm: g-convex:item3}
    \end{enumerate}
\end{theorem}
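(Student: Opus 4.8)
The plan is to reduce all three conditions to the classical first- and second-order characterizations of convexity for a single-variable function, applied to the restriction $g \coloneqq f \circ \gamma$ of $f$ along an arbitrary geodesic $\gamma \colon [0,1] \to U$. The entire argument hinges on two identities for the derivatives of $g$, namely
\[
g'(t) = \dot\gamma(t)(f) \qquad \text{and} \qquad g''(t) = \Hess_\nabla f\bigl(\dot\gamma(t), \dot\gamma(t)\bigr).
\]
The first is immediate from the chain rule. For the second—which I expect to be the technical heart of the proof—I would extend $\dot\gamma$ to a vector field $V$ on a neighborhood of $\gamma(t)$ with $V|_\gamma = \dot\gamma$, compute $g''(t) = V(Vf)|_{\gamma(t)}$ by differentiating $g'(t) = (Vf)(\gamma(t))$ along the curve, and then invoke the geodesic equation $\nabla_{\dot\gamma}\dot\gamma = 0$ together with the definition $\Hess_\nabla f(X,Y) = X(Y(f)) - (\nabla_X Y)(f)$ to conclude $g''(t) = V(Vf) - (\nabla_V V)f|_{\gamma(t)} = \Hess_\nabla f(\dot\gamma,\dot\gamma)|_{\gamma(t)}$. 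Two points require care here: that $\Hess_\nabla f$ is a genuine $(0,2)$-tensor, so its value $\Hess_\nabla f(\dot\gamma,\dot\gamma)$ depends only on $\dot\gamma$ at the point and is independent of the extension $V$; and that it is symmetric, a consequence of the torsion-free identity $\nabla_X Y - \nabla_Y X = [X,Y]$, so that the condition $\Hess_\nabla f \succeq 0$ in (c) is well posed.

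Granting these identities, I would prove (a) $\Leftrightarrow$ (c) as follows. A smooth function $g$ on $[0,1]$ is convex if and only if $g'' \ge 0$ everywhere. If $f$ is g-convex then $g = f\circ\gamma$ is convex for every geodesic $\gamma$, so $\Hess_\nabla f(\dot\gamma(t),\dot\gamma(t)) = g''(t) \ge 0$; since for every $p \in U$ and every $v \in \mathbb{T}_p\mathcal{M}$ there is a geodesic with $\gamma(0)=p$ and $\dot\gamma(0)=v$ (existence from the geodesic ODE determined by the Christoffel symbols), this yields $\Hess_\nabla f(p)(v,v)\ge 0$ for all $v$, i.e.\ $\Hess_\nabla f(p) \succeq 0$. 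Conversely, if $\Hess_\nabla f \succeq 0$ on $U$ then $g''(t) = \Hess_\nabla f(\dot\gamma(t),\dot\gamma(t)) \ge 0$ for every geodesic, so each $g$ is convex and $f$ is g-convex.

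For (a) $\Leftrightarrow$ (b) I would use the first-order characterization: a differentiable $g$ on an interval is convex if and only if $g(t) \ge g(s) + g'(s)(t-s)$ for all $s,t$. The direction (a) $\Rightarrow$ (b) is the special case $s=0$, $t=1$, since $g(0)=f(x)$, $g(1)=f(y)$, and $g'(0)=\dot\gamma(0)(f)$. For (b) $\Rightarrow$ (a), fix a geodesic $\gamma$ and any $s,t \in [0,1]$; the reparametrization $\tilde\gamma(\tau) = \gamma(s+\tau(t-s))$ is the geodesic joining $\gamma(s)$ to $\gamma(t)$ with $\dot{\tilde\gamma}(0) = (t-s)\dot\gamma(s)$, so applying (b) to $x=\gamma(s)$, $y=\gamma(t)$ gives $g(s) + (t-s)g'(s) \le g(t)$. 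As this holds for all $s,t$, the first-order criterion makes $g$ convex, hence $f$ is g-convex.

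The main obstacle, as flagged above, is the clean derivation of $g'' = \Hess_\nabla f(\dot\gamma,\dot\gamma)$, because $\dot\gamma$ is a priori defined only along $\gamma$; the tensoriality and symmetry of $\Hess_\nabla f$ are exactly what make the extension argument legitimate and the statement of (c) meaningful. Everything else is a transcription of the one-variable convexity criteria through these two identities.
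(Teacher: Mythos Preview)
Your argument is correct and is the standard route to this result: reduce along each geodesic to the one-variable first- and second-order characterizations of convexity via the identities $g'(t)=\dot\gamma(t)(f)$ and $g''(t)=\Hess_\nabla f(\dot\gamma(t),\dot\gamma(t))$, the latter justified by the geodesic equation and the tensoriality (and symmetry) of $\Hess_\nabla f$. There is nothing to compare against, however, because the paper does not supply its own proof of this theorem; it is quoted as a known result with a citation to \cite[Subsections~3.5 \& 3.6]{UC1994}, and your write-up is essentially what one finds there.
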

\section{Criteria for g-convexity}\label{sec:criteria}
This section is concerned with criteria for a function to be g-convex with respect to some connection.  We provide two necessary conditions and two sufficient conditions. 
\begin{proposition}[Necessary condition for g-convexity I]\label{prop:necessary condition}
If $f\in \mathcal{C}^{\infty}(\mathbb{R}^n)$ is g-convex with respect to some connection,  then $\Hess_{\textsf{e}}(f)$ is positive semidefinite at every critical point of $f$.
\end{proposition}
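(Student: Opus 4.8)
The plan is to reduce the statement to the coordinate characterization of the connection Hessian provided by Lemma~\ref{lem:Hessian translation}, exploiting the fact that the discrepancy between $\Hess_\nabla f$ and $\Hess_{\mathsf{e}} f$ is a linear combination of the first-order partial derivatives of $f$, all of which vanish at a critical point. First I would observe that $\mathbb{R}^n$, being the whole manifold, is trivially a g-convex subset of itself with respect to any connection $\nabla$, since every geodesic of $\mathbb{R}^n$ lies in $\mathbb{R}^n$. Hence the hypothesis that $f$ is g-convex with respect to some $\nabla$, together with the implication (a) $\Rightarrow$ (c) of Theorem~\ref{thm: g-convex}, yields $\Hess_\nabla f(x) \succeq 0$ for every $x \in \mathbb{R}^n$, where the Hessian is computed in the standard global chart.

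Next, working in that chart with Christoffel symbols $\Gamma_{ij}^k$ of $\nabla$, I would invoke Lemma~\ref{lem:Hessian translation}, which gives
\[
\Hess_\nabla f\left( \frac{\partial}{\partial x_i}, \frac{\partial}{\partial x_j}\right) = \frac{\partial^2 f}{\partial x_i \partial x_j} - \sum_{k=1}^n \Gamma_{ij}^k \frac{\partial f}{\partial x_k}.
\]
Let $p$ be a critical point of $f$, so that $\tfrac{\partial f}{\partial x_k}(p) = 0$ for all $k$. Evaluating the identity above at $p$ annihilates the entire Christoffel correction term, leaving the matrix identity $\Hess_\nabla f(p) = \Hess_{\mathsf{e}} f(p)$. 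Combining this with the positive semidefiniteness of $\Hess_\nabla f(p)$ from the previous step gives $\Hess_{\mathsf{e}} f(p) \succeq 0$, as desired.

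The argument is essentially immediate once the right characterization is in place, so I do not expect a serious obstacle; the entire content is the elementary observation that the connection-dependent correction is proportional to $\grad_{\mathsf{e}} f$ and therefore disappears exactly at critical points. The only point requiring care is the justification that Theorem~\ref{thm: g-convex} may be applied on all of $\mathbb{R}^n$. One may either note (as above) that the ambient manifold is vacuously g-convex with respect to $\nabla$, or, if a fully self-contained route is preferred, replace the appeal to the theorem by a direct second-derivative test: for a unit vector $v$, take a geodesic $\gamma$ with $\gamma(0)=p$ and $\dot\gamma(0)=v$, use g-convexity along a short reparametrized segment to conclude $(f\circ\gamma)''(0)\ge 0$, and compute $(f\circ\gamma)''(0)=v^{\tp}\Hess_{\mathsf{e}} f(p)\,v$ using the geodesic equation $\ddot\gamma_k=-\sum_{i,j}\Gamma_{ij}^k\dot\gamma_i\dot\gamma_j$, the Christoffel terms once more dropping out because $p$ is critical.
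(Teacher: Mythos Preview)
Your proof is correct and follows essentially the same route as the paper: apply Lemma~\ref{lem:Hessian translation} to express $\Hess_\nabla f$ as $\Hess_{\mathsf{e}} f$ plus a Christoffel correction that is linear in the $\partial f/\partial x_k$, observe that this correction vanishes at a critical point, and invoke Theorem~\ref{thm: g-convex}\eqref{thm: g-convex:item3} to conclude. Your added remarks on why $\mathbb{R}^n$ is g-convex and the alternative direct second-derivative computation are fine but not needed for the paper's argument.
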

\begin{proof}
Let $\nabla$ be a connection on $\mathbb{R}^n$ with respect to which $f$ is  g-convex.  By Lemma~\ref{lem:Hessian translation},  we have 
\[
\Hess_{\textsf{e}} f \left( \frac{\partial }{\partial x_i},\frac{\partial }{\partial x_j} \right) = \frac{\partial^2 f}{\partial x_i \partial x_j} = \Hess_\nabla f \left( \frac{\partial }{\partial x_i},\frac{\partial }{\partial x_j} \right) + \sum_k \Gamma_{ij}^k\frac{\partial f}{\partial x_k},\quad 1 \le i \le j \le n.
\]
Therefore,  if $p$ is a critical point of $f$,  then $\Hess_{\textsf{e}}(f)_p = \Hess_{\nabla}(f)_p \succeq 0$ by condition~\eqref{thm: g-convex:item3} in Theorem~\ref{thm: g-convex} .
\end{proof}
Next we consider sufficient conditions.
\begin{proposition}[Sufficient condition for g-convexity I]\label{thm:sufficient0}
Let $\mathcal{M}$ be an $n$-dimensional manifold and let $f \in \mathcal{C}^{\infty}(\mathcal{M})$.  If $f$ has no critical points,  then for any symmetric $\mathcal{C}^\infty(M)$-bilinear map $A: \mathfrak{X}(\mathcal{M}) \times \mathfrak{X}(\mathcal{M}) \to \mathcal{C}^{\infty}(\mathcal{M})$,  there exists a connection $\nabla$ such that $\Hess_{\nabla}(f) = A$.  In particular,  any smooth function with no critical point is g-convex with respect to some connection.
\end{proposition}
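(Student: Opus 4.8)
The plan is to reduce the global statement to a pointwise linear problem in each coordinate chart, solve it explicitly using the hypothesis that $f$ has no critical point, and then patch the local connections together with a partition of unity via Lemma~\ref{lem:gluing}. Concretely, in a chart $x = (x_1,\dots,x_n)$ on an open set $U \subseteq \mathcal{M}$, write $A_{ij} := A(\partial/\partial x_i, \partial/\partial x_j)$ and recall from Lemma~\ref{lem:Hessian translation} that a connection with Christoffel symbols $\Gamma_{ij}^k$ satisfies $\Hess_\nabla f(\partial/\partial x_i, \partial/\partial x_j) = \partial^2 f/\partial x_i\partial x_j - \sum_k \Gamma_{ij}^k \,\partial f/\partial x_k$. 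Hence the requirement $\Hess_\nabla f = A$ on $U$ is equivalent to the system
\[
\sum_{k=1}^n \Gamma_{ij}^k \frac{\partial f}{\partial x_k} = \frac{\partial^2 f}{\partial x_i \partial x_j} - A_{ij}, \qquad 1 \le i \le j \le n,
\]
which is a single linear equation in the unknowns $\Gamma_{ij}^1,\dots,\Gamma_{ij}^n$ for each pair $(i,j)$.

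First I would solve this system locally. Since $f$ has no critical point, the vector $\grad_{\mathsf{e}} f = (\partial f/\partial x_1,\dots,\partial f/\partial x_n)$ is nowhere zero on $U$, so $\lVert \grad_{\mathsf{e}} f\rVert^2 = \sum_l (\partial f/\partial x_l)^2$ is a positive smooth function there. One can therefore take the explicit solution
\[
\Gamma_{ij}^k := \frac{1}{\lVert \grad_{\mathsf{e}} f\rVert^2}\left( \frac{\partial^2 f}{\partial x_i \partial x_j} - A_{ij}\right)\frac{\partial f}{\partial x_k},
\]
which is smooth on $U$ and symmetric in $i,j$, because both $\partial^2 f/\partial x_i\partial x_j$ and $A_{ij}$ are. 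These symbols define a torsion-free connection $\nabla_U$ on $U$, and substituting back into Lemma~\ref{lem:Hessian translation} shows $\Hess_{\nabla_U} f = A$ on $U$.

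Next I would choose an atlas $\{U_\alpha\}$ of coordinate charts, construct $\nabla_\alpha := \nabla_{U_\alpha}$ on each as above, pick a partition of unity $\{\rho_\alpha\}$ subordinate to $\{U_\alpha\}$, and set $\nabla := \sum_\alpha \rho_\alpha \widetilde{\nabla}_\alpha$ as in Lemma~\ref{lem:gluing}, which is a connection on $\mathcal{M}$. The remaining, and most delicate, step is to verify that this glued connection still satisfies $\Hess_\nabla f = A$, since the $\nabla_\alpha$ disagree on overlaps and the Christoffel symbols do not transform tensorially. The point is that the Hessian depends affinely on the connection: for any vector fields $X,Y$,
\[
\Hess_\nabla f(X,Y) = X(Y f) - (\nabla_X Y) f = X(Yf) - \sum_\alpha \rho_\alpha\, \big((\widetilde{\nabla}_\alpha)_X Y\big) f .
\]
Using $\sum_\alpha \rho_\alpha = 1$ to redistribute the connection-independent term $X(Yf)$, this equals $\sum_\alpha \rho_\alpha\big[X(Yf) - ((\widetilde{\nabla}_\alpha)_X Y)f\big]$; at any point $p$ the only surviving summands have $\rho_\alpha(p) \neq 0$, hence $p \in U_\alpha$, where the bracket is exactly $\Hess_{\nabla_\alpha} f(X,Y)_p = A(X,Y)_p$. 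Thus $\Hess_\nabla f(X,Y) = \big(\sum_\alpha \rho_\alpha\big) A(X,Y) = A(X,Y)$, as desired.

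Finally, for the ``in particular'' assertion I would apply the above with $A = 0$, which is symmetric, $\mathcal{C}^\infty(\mathcal{M})$-bilinear, and trivially positive semidefinite. The resulting connection satisfies $\Hess_\nabla f \equiv 0 \succeq 0$, so $f$ is g-convex with respect to $\nabla$ by the equivalence (c) in Theorem~\ref{thm: g-convex}. I expect the genuine content of the argument to lie entirely in the gluing verification; the local solvability is immediate from the no-critical-point hypothesis, and the only care needed is to confirm that the identity $\Hess_\nabla f = A$ is preserved under the convex combination, torsion-freeness being automatic from Lemma~\ref{lem:gluing}.
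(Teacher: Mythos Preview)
Your proof is correct and follows essentially the same route as the paper: solve the linear system in each chart by an explicit formula and glue via Lemma~\ref{lem:gluing}. The only cosmetic differences are that the paper pulls back an auxiliary global Riemannian metric $\mathsf{g}$ into each chart (writing $\Gamma_{ij}^k = (f_{ij}-a_{ij})\,\mathsf{g}^{kl}f_l / \lVert \grad_{\mathsf{g}} f\rVert^2 + S_{ij}^k$ with a free homogeneous term), whereas you simply use the chart-Euclidean metric and take $S_{ij}^k=0$; and you spell out the affine-in-$\nabla$ verification of the gluing step, which the paper leaves to the reader.
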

\begin{proof}
Let $\mathsf{g}$ be a Riemannian metric on $M$ and let $\{U_\alpha \}_{\alpha \in \Lambda}$ be an open cover of $M$ such that each $U_\alpha$ admits a coordinate map $x^{\alpha}: U_\alpha \to \mathbb{R}^n$.  Here $\Lambda$ is some index set and the existence of $\mathsf{g}$ is guaranteed by \cite[Proposition~13.3]{Lee2003}.  We pick a smooth partition of unity $\{\phi_{\alpha}\}_{\alpha \in \Lambda}$ subordinate to $\{U_\alpha \}_{\alpha \in \Lambda}$ and define $A^\alpha \coloneqq A|_{U_{\alpha}}$ for each $\alpha \in \Lambda$.  By definition,  we have $\sum_{\alpha \in \Lambda}\phi_\alpha A^\alpha = A$.  We observe that if there is a  connection $\nabla^\alpha$ such that $\Hess_{\nabla^\alpha}(f|_{U_{\alpha}}) = A^{\alpha}$ for each $\alpha \in \Lambda$,  then $\nabla \coloneqq \sum_{\alpha \in \Lambda} \phi_\alpha \nabla^\alpha$ is a connection on $M$ satisfying $\Hess_{\nabla}(f) = A$ by Lemmas~\ref{lem:gluing} and \ref{lem:Hessian translation}.

Therefore,  it is sufficient to assume that $M = U \subseteq \mathbb{R}^n$ is an open subset. In this situation,  we may identify $A$ with $(a_{ij})_{i,j=1}^n \in \mathcal{C}^{\infty}(U,\mathsf{S}^2(\mathbb{R}^n))$ where
\[
a_{ij} \coloneqq A\left( \dfrac{\partial}{\partial x_i},\dfrac{\partial }{\partial x_j} \right) \in \mathcal{C}^{\infty}(U),\quad 1 \le i,  j \le n.
\]
We denote $f_i \coloneqq \partial f / \partial x_i$ for $1\le i \le n$ and consider the following system of homogeneous linear equations:
\begin{equation}\label{thm:sufficient0:Sij}
\sum_{k=1}^n f_k S_{ij}^k  =0,  \quad    S_{ij}^k - S_{ji}^k =0,\quad 1 \le i,j \le n.
\end{equation}
For each solution $\{S^k_{ij}: 1 \le i,j,k \le n\}$ of \eqref{thm:sufficient0:Sij},  we define 
\begin{equation}\label{thm:sufficient0:Gammaij}
\Gamma_{ij}^k \coloneqq \sum_{l=1}^n \frac{(f_{ij}-a_{ij}) \mathsf{g}^{kl}f_l}{\lVert \grad_{\mathsf{g}} (f) \rVert^2}+S_{ij}^k,
\end{equation}
where $f_{ij} \coloneqq \partial^2 f / \partial x_i \partial x_j$ for $1 \le i, j \le n$.  Since $f$ has no critical point,  $\lVert \grad_{\mathsf{g}}(f)(x)\rVert > 0$.  Hence $\Gamma^k_{ij}$'s are well-defined.  It is straightforward to verify that
    \begin{equation}\label{thm:sufficient0:eq}
\Gamma^k_{ij} = \Gamma^k_{ji},\quad \frac{\partial^2 f}{\partial x_i\partial x_j}-\sum_{k=1}^n \Gamma_{ij}^k \frac{\partial f}{\partial x_k}=a_{ij}\quad  1 \le i,j \le n.
\end{equation}
Let $\nabla$ be the connection determined by $\{ \Gamma^k_{ij}: 1 \le i,j,k \le n \}$.  Then Lemma~\ref{lem:Hessian translation} together with \eqref{thm:sufficient0:eq} implies $\Hess_{\nabla}(f) = A$ and this completes the proof.
\end{proof}

\begin{remark}\label{rmk:sufficient0}
Some remarks on Proposition~\ref{thm:sufficient0} are in order:
\begin{itemize}
\item[$\diamond$] Proposition~\ref{thm:sufficient0} generalizes \cite[Theorem~6.6, Chapter~3]{UC1994},  which is concerned with the case $A = 0$. 
\item[$\diamond$] If $f\in \mathcal{C}^{k}(\mathcal{M})$ and $A $ is differentiable up to order $k-2$ for some integer $k\ge 2$,  then the construction in the proof of Proposition~\ref{thm:sufficient0} provides us a connection of order $k-2$.
\item[$\diamond$] It is clear that the connection constructed in the proof of Proposition~\ref{thm:sufficient0} is not unique,  even if we take $A = 0$. 
\end{itemize}
\end{remark}

It is proved in \cite[Theorem~6]{pp05} that every $f$ with a unique critical point which is a minimum must be g-convex with respect to some connection.  This is generalized to a large extent by the proposition that follows.
\begin{proposition}[Sufficient condition for g-convexity II]\label{sufficient1}
Let $(\mathcal{M},\mathsf{g})$ be a Riemannian manifold and let $f$ be a smooth function on $\mathcal{M}$.  Suppose that for any critical point $x\in \mathcal{M}$ of $f$,  there exists a neighbourhood $U$ of $x$ such that  $\hess_{\mathsf{g}} f (y) \succeq 0$ for all $y \in U$.  Then there exists a connection $\nabla$ such that $f$ is g-convex with respect to $\nabla$.
\end{proposition}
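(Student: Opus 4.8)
The plan is to build the desired connection by using the Levi-Civita connection of $\mathsf{g}$ near the critical points of $f$ (where its Hessian is already positive semidefinite by hypothesis, and where the freedom of Proposition~\ref{thm:sufficient0} is unavailable), using the full flexibility of Proposition~\ref{thm:sufficient0} away from the critical points (where $f$ has no critical point, so its Hessian can be prescribed arbitrarily), and then gluing the two connections together via a partition of unity as in Lemma~\ref{lem:gluing}. The conceptual engine of the argument, which I would isolate first, is that the Hessian operator $f \mapsto \Hess_{\nabla} f$ is affine-linear in the Christoffel symbols, so that the partition-of-unity gluing descends to a \emph{convex combination} at the level of Hessians, and convex combinations of positive semidefinite forms remain positive semidefinite.

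Concretely, let $C \coloneqq \{x \in \mathcal{M} : \grad_{\mathsf{g}} f(x) = 0\}$ be the critical set, which is closed since $\grad_{\mathsf{g}} f$ is continuous. For each $x \in C$ the hypothesis furnishes an open neighbourhood $U_x$ on which $\Hess_{\mathsf{g}} f \succeq 0$; I would set $W \coloneqq \bigcup_{x \in C} U_x$, an open set containing $C$ on which $\Hess_{\nabla_{\mathsf{g}}} f = \Hess_{\mathsf{g}} f \succeq 0$. On the complementary open set $V \coloneqq \mathcal{M} \setminus C$ the function $f$ has no critical point, so Proposition~\ref{thm:sufficient0} (applied to the manifold $V$ and to $A \coloneqq \mathsf{g}|_V$) yields a connection $\nabla_V$ on $V$ with $\Hess_{\nabla_V} f = \mathsf{g}|_V \succ 0$. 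Since $C \subseteq W$, the pair $\{W, V\}$ is an open cover of $\mathcal{M}$.

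Next I would fix a partition of unity $\{\rho_W, \rho_V\}$ subordinate to $\{W, V\}$ and define $\nabla \coloneqq \rho_W \widetilde{\nabla}_{\mathsf{g}} + \rho_V \widetilde{\nabla}_V$, which is a connection on $\mathcal{M}$ by Lemma~\ref{lem:gluing}. In any local chart the Christoffel symbols of $\nabla$ are the $\rho$-weighted averages of those of $\nabla_{\mathsf{g}}$ and $\nabla_V$; substituting these into Lemma~\ref{lem:Hessian translation} and using $\rho_W + \rho_V = 1$ to split the pure second-derivative term accordingly yields the pointwise identity
\[
\Hess_{\nabla} f = \rho_W \, \Hess_{\nabla_{\mathsf{g}}} f + \rho_V \, \Hess_{\nabla_V} f .
\]
At every point this is a convex combination of two positive semidefinite forms, hence positive semidefinite, so $\Hess_{\nabla} f \succeq 0$ on all of $\mathcal{M}$. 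Condition~\eqref{thm: g-convex:item3} in Theorem~\ref{thm: g-convex} then shows that $f$ is g-convex with respect to $\nabla$.

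The verification of the displayed Hessian identity is a routine substitution, and the stability of positive semidefiniteness under convex combinations is elementary. The one step that genuinely demands care is confirming that gluing connections by a partition of unity produces \emph{exactly} this convex combination of Hessians, with no spurious tensorial corrections. This succeeds precisely because the difference of two connections is a tensor and the weights $\rho_W, \rho_V$ sum to one, so the non-tensorial Leibniz terms cancel—the same mechanism that makes $\nabla$ a connection in Lemma~\ref{lem:gluing} is exactly what lets positive semidefiniteness survive the gluing.
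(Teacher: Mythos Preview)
Your proof is correct and follows essentially the same approach as the paper: use $\nabla_{\mathsf{g}}$ near the critical set, invoke Proposition~\ref{thm:sufficient0} on the complement, and glue via Lemma~\ref{lem:gluing}, noting that the Hessian of the glued connection is the corresponding convex combination of Hessians. Your version is in fact slightly cleaner than the paper's, since you collapse the family $\{U_x\}_{x\in C}$ into a single open set $W$ (all of these carry the same connection $\nabla_{\mathsf{g}}$ anyway), and you make explicit both the choice $A=\mathsf{g}|_V$ and the Hessian identity that the paper leaves as ``straightforward to verify.''
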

\begin{proof}
Let $C_f$ be the set of critical points of $f$ and let $V \coloneqq \mathcal{M} \setminus C_f$.  For each $x\in C_f$,  we choose and fix an open neighbourhood $U_x$ on which $\hess_{\mathsf{g}}f$ is positive semidefinite. 

By definition,  we have $C_f = df^{-1}(0)$ where $df$ is the differential of $f$ regarded as a smooth map $df: \mathcal{M} \to \mathbb{T}^\ast \mathcal{M}$ and $0$ denotes the zero section of the vector bundle $\mathbb{T}^\ast \mathcal{M}$.  In particular,  we may conclude that $V$ is an open subset of $\mathcal{M}$.  Thus,  the family $\{U_x: x\in C_f\}\cup \{V\}$ is an open covering of $M$.  Let $\{\eta_x:  x\in C_f \} \cup \{\eta_{V}\}$ be a partition of unity  subordinate to $\{U_x: x\in C_f\}\cup \{V\}$.

On each $U_x$,  we define a connection $\nabla_x \coloneqq \nabla_{\mathsf{g}}|_{U_x}$.  Then $\hess_{\nabla_x}f(y) \succeq 0 $ for all $y\in U_x$.  Since $V$ is an open submanifold of $\mathcal{M}$ on which $f$ has no critical points,  Proposition~\ref{thm:sufficient0} implies the existence of a connection $\nabla_{V}$ on $V$ such that  $\hess_{\nabla_V}f(y) \succeq 0 $ for all $y\in V$.  By the Lemma \ref{lem:gluing},  $\{\nabla_x:  x\in C_f\}$ together with $\nabla_V$ defines an  connection $\nabla$ on $\mathcal{M}$.  By the non-negativity of $\eta_x$ and $\eta_V$,  it is straightforward to verify that $\hess_{\nabla}f (y) \succeq 0$ for every $y\in M$ and this completes the proof.
\end{proof}
\begin{example}\label{ex:necessary}
In Proposition~\ref{sufficient1},  the condition that $\hess_{\mathsf{g}} f \succeq 0$ in  \emph{a neighbourhood} of $x\in C_f$ is essential.  It is not sufficient to just require that $\hess_{\mathsf{g}} f(x) \succeq 0$.  For instance,  the function $f(x,y) =x^2y^2$ has $C_f = \{0\}\times \mathbb{R}\cup \mathbb{R}\times \{0\}$ and its Euclidean Hessian matrix is $\Hess_{\textsf{e}}  f = \begin{bsmallmatrix}
        2y^2 & 4xy\\
        4xy & 2x^2
\end{bsmallmatrix}$,  which is positive semidefinite on $C_f$ and is indefinite on $\mathbb{R}^2 \setminus C_f$.  However,  $f$ is not  g-convex with respect to any connection according to  Theorem~\ref{monomial}.  We notice that this example also indicates that the converse of Proposition~\ref{prop:necessary condition} is false.  

We also notice that this condition is not a necessary condition.  For instance,  we have $C_f = \{0\} \times \mathbb{R}$ for $f(x,y) =x^2e^y$ .  It is observed in \cite{pp05} that $f$ is  g-convex with respect to the connection defined by 
\[
\Gamma_{ij}^k = \begin{cases}
1 &\text{~if $(i,j,k)=  (1,2,1)$ or $(2,1,1)$ }, \\
0 &\text{~otherwise}.
\end{cases}
\]
Nonetheless,  its Euclidean Hessian matrix $\Hess_{\textsf{e}}  f  =\begin{bsmallmatrix}
        2e^y & 2xe^y\\
         2xe^y & x^2e^y
    \end{bsmallmatrix}$ is positive semidefinite on $C_f$ and is indefinite on $\mathbb{R}^2 \setminus C_f$.  
\end{example}
For a better illustration,  we summarize criteria and their relations discussed above in Figure~\ref{fig:criteria},  where we assume that $f$ is a smooth function on an open subset of $\mathbb{R}^n$.
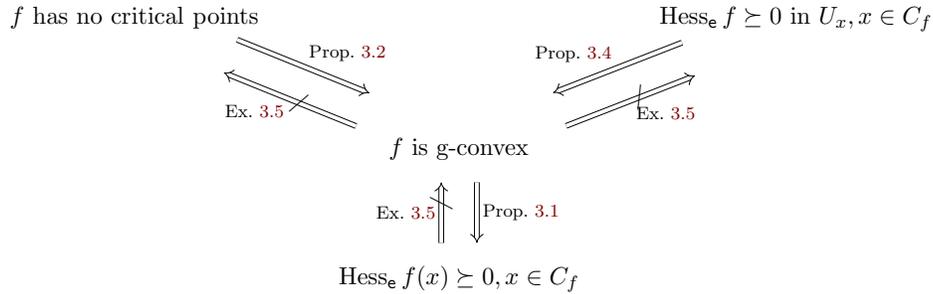
\begin{figure}[!htbp]
\adjustbox{scale=0.85,center}{
\begin{tikzcd}
	{f\text{~has no critical points}} && {\hess_{\textsf{e}} f \succeq 0\text{~in~}U_x, x\in C_f} \\
	\\
	& {f \text{~is  g-convex}} \\
	\\
	& {\hess_{\textsf{e}} f(x) \succeq 0, x \in C_f}
	\arrow["{\text{Prop.~\ref{thm:sufficient0}}}", shift left=3, shorten <=22pt, shorten >=22pt, Rightarrow, from=1-1, to=3-2]
	\arrow["{\text{Prop.~\ref{sufficient1}}}"', shift right=3, shorten <=23pt, shorten >=23pt, Rightarrow, from=1-3, to=3-2]
	\arrow["{\text{Ex.~\ref{ex:necessary}}}", "\not"{marking}, shift left=3, shorten <=22pt, shorten >=22pt, Rightarrow, from=3-2, to=1-1]
	\arrow["{\text{Ex.~\ref{ex:necessary}}}"', "\not"{marking}, shift right=3, shorten <=23pt, shorten >=23pt, Rightarrow, from=3-2, to=1-3]
	\arrow["{\text{Prop.~\ref{prop:necessary condition}}}", shift left=3, shorten <=6pt, shorten >=6pt, Rightarrow, from=3-2, to=5-2]
	\arrow["{\text{Ex.~\ref{ex:necessary}}}", "\not"{marking}, shift left=3, shorten <=6pt, shorten >=6pt, Rightarrow, from=5-2, to=3-2]
\end{tikzcd}}
\caption{Criteria for  g-convexity}
\label{fig:criteria}
\end{figure}

In the following we prove a necessary condition for a function to be  g-convex with respect to some geodesically complete connection,  which might be of independent interest.  Here a connection is \emph{geodesically complete} if every geodesic extends infinitely. 
\begin{proposition}[Necessary condition for g-convexity II]\label{prop:nec-cond2}
Suppose that $f\in \mathcal{C}^{\infty}(\mathcal{M})$ is  g-convex with respect to some geodesically complete connection.  If $C_f$ is discrete,  then $|C_f| \le 1$.
\end{proposition}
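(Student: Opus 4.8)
The plan is to argue by contradiction. Suppose $f$ is g-convex with respect to a geodesically complete connection $\nabla$, that $C_f$ is discrete, and yet $C_f$ contains two distinct points $p\neq q$; I will produce an entire geodesic arc of critical points, contradicting discreteness. The starting observation is that g-convexity controls $f$ along geodesics emanating from a critical point. Indeed, if $p\in C_f$ and $\gamma$ is any geodesic with $\gamma(0)=p$, then by definition $f\circ\gamma$ is convex (Theorem~\ref{thm: g-convex}), while $(f\circ\gamma)'(0)=df_p(\dot\gamma(0))=0$ because $p$ is critical; hence $t=0$ is a global minimum of $f\circ\gamma$ and $f(\gamma(t))\geq f(p)$ for every $t$. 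Since $\exp_p$ is a local diffeomorphism at the origin, this already shows that every critical point is a \emph{local} minimum of $f$, and because $C_f$ is discrete, each such minimum is in fact strict and isolated.

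Next I would compare the two critical points. Using geodesic completeness, every geodesic from $p$ is defined on all of $\mathbb{R}$, so $f\geq f(p)$ holds on the whole set $\exp_p(\mathbb{T}_p\mathcal{M})$ of points reachable from $p$. Suppose $q$ is reachable by a geodesic $\gamma\colon[0,1]\to\mathcal{M}$ with $\gamma(0)=p$, $\gamma(1)=q$. If $f(q)>f(p)$, then the convex function $f\circ\gamma$ rises strictly from its minimum at $t=0$, hence is strictly increasing near $t=1$; consequently $q$ has nearby points (at parameters slightly less than $1$) of strictly smaller value, contradicting that $q$ is a local minimum. Therefore $f(q)=f(p)$, and $f\circ\gamma$, being convex with both endpoint values equal to its minimum, is constant. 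Running the same argument along the reversed geodesic from $q$ confirms $f(p)=f(q)=:m$ and $f\equiv m$ along $\gamma$.

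To conclude, I would upgrade $m$ to the global minimum of $f$, which makes the interior of $\gamma$ consist of genuine critical points. Granting that each critical point is a global minimizer and that $p,q$ are joined by a geodesic, every interior point $\gamma(t_0)$ satisfies $f(\gamma(t_0))=m=\min f$ and is therefore itself a global minimizer, hence a critical point. Since $p\neq q$, the arc $\gamma([0,1])$ is non-degenerate, so $C_f$ contains a continuum and cannot be discrete — the desired contradiction, forcing $|C_f|\leq 1$.

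I expect the main obstacle to be exactly the geometric input invoked above: that geodesic completeness guarantees both that each critical point is a \emph{global} minimizer and that two critical points can be joined by a single geodesic, i.e.\ that the exponential map is surjective on the (connected) manifold. This is automatic in the Riemannian category by Hopf--Rinow, but Hopf--Rinow fails for general affine (and even pseudo-Riemannian) connections, so this step cannot be taken for granted and is precisely where the completeness hypothesis must be exploited. I would attack it either by showing that the reachable set $\exp_p(\mathbb{T}_p\mathcal{M})$ exhausts $\mathcal{M}$, or by combining the local-minimum property with the discreteness of $C_f$ to rule out critical points lying in distinct geodesic components; reachability is not transitive, so care is needed to avoid concatenating geodesics illegitimately.
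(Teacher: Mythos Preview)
Your approach is essentially the same as the paper's, only unpacked. The paper argues in three lines: (i) every critical point is a global minimum (citing \cite[Theorem~7.5, Chapter~3]{UC1994}, which is precisely your computation $(f\circ\gamma)'(0)=0$ plus convexity plus completeness), so $C_f$ is the minimum sublevel set $\{f\le f_0\}$; (ii) sublevel sets of g-convex functions are g-convex subsets (citing \cite[Theorem~3.4, Chapter~3]{UC1994}); (iii) a g-convex subset is path-connected, and a discrete path-connected set has at most one point. Your contradiction via a geodesic arc of critical points is the concrete content of step~(iii): once $p$ and $q$ are both global minimizers, any geodesic joining them lies in the minimum level set and furnishes a continuum of critical points.

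The obstacle you flag---surjectivity of $\exp_p$ and existence of a geodesic joining $p$ to $q$ for a merely affine, geodesically complete connection---is real, and it is exactly the place where the paper leans on the cited results from \cite{UC1994} rather than arguing from scratch. In other words, the paper does not resolve this issue internally either; it outsources both ``critical $\Rightarrow$ global minimum'' and ``g-convex set $\Rightarrow$ path-connected'' to Udri\c{s}te's book, whose setting supplies the needed connectivity. Your instinct to worry about Hopf--Rinow failing outside the Riemannian category is well placed; what you are missing relative to the paper is simply the willingness to invoke those two results as black boxes. If you accept them, your argument is complete and matches the paper's.
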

\begin{proof}
We notice that by the argument in the proof of \cite[Theorem~7.5,  Chapter~3]{UC1994},  each $c \in C_f$ is a global minimal point.  Let $f_0 \coloneqq \min_{x\in \mathcal{M}} f(x)$.  Then $C_f = \{x \in \mathcal{M}:  f(x) \le f_0\}$ is a level set.  According to \cite[Theorem~3.4, Chapter~3]{UC1994},  we conclude that $C_f$  must be g-convex.  In particular,  it is path-connected.  Since $C_f$ is discrete and $M$ is Hausdorff,  this implies that $C_f$ is either empty or is a singleton.
\end{proof}
\begin{theorem}]\label{thm:sparseness}
Let $\mathcal{M}$ be a smooth manifold.  We denote by $A(\mathcal{M})$ (resp.  $A^{\mathsf{c}}(\mathcal{M})$) the subset of $\mathcal{C}^{\infty}(\mathcal{M})$ consisting of functions that are  g-convex with respect to some  (resp.  geodesically complete ) connection.  We have:
\begin{enumerate}[(a)]
\item If $f\in \mathcal{C}^{\infty}(\mathcal{M})$ is g-convex with respect to some complete Riemannian metric and $\Vol(\mathcal{M}) < \infty$,  then $f$ is a constant.  \label{thm:sparseness-1}
\item If $\mathcal{M}$ is compact,  then $A(\mathcal{M})$ is a nowhere dense subset of $\mathcal{C}^{\infty}(\mathcal{M})$ in compact-open topology.
\label{thm:sparseness-2}
\item If $\mathcal{M} = \mathbb{R}^n$,  then there is an open dense subset $U \subsetneq \mathbb{R}[x_1,\dots,  x_n]_{\le d}$ such that any $f \in U \cap A^{\mathsf{c}}(\mathbb{R}^n)$ has at most $1$ critical point.
\label{thm:sparseness-3}  
\end{enumerate}
\end{theorem}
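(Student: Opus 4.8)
The plan is to handle the three parts separately, in each case reducing g-convexity to the Hessian criterion of Theorem~\ref{thm: g-convex} and exploiting the fact that at a critical point the connection-dependent terms of the Hessian vanish. The single most useful observation throughout is that if $df_p = 0$, then $(\nabla_X Y)(f)(p) = df_p(\nabla_X Y) = 0$, so $\Hess_\nabla f(p)(X,Y) = X(Y(f))(p)$ is \emph{independent of the connection} $\nabla$ and agrees with the metric Hessian.

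For part~\eqref{thm:sparseness-1}, since $f$ is g-convex with respect to the Levi-Civita connection $\nabla_{\mathsf{g}}$ of a complete metric $\mathsf{g}$, Theorem~\ref{thm: g-convex} gives $\Hess_{\mathsf{g}} f \succeq 0$ everywhere; taking the $\mathsf{g}$-trace yields $\Delta_{\mathsf{g}} f = \tr_{\mathsf{g}} \Hess_{\mathsf{g}} f \ge 0$, so $f$ is subharmonic. I would then invoke the Liouville-type property of complete Riemannian manifolds of finite volume to force $\Delta_{\mathsf{g}} f \equiv 0$: concretely, $\int_{\mathcal{M}} \Delta_{\mathsf{g}} f \, dV_{\mathsf{g}} = 0$ (a Stokes/Gaffney theorem on complete manifolds, once the requisite integrability of $\grad_{\mathsf{g}} f$ is arranged) combined with $\Delta_{\mathsf{g}} f \ge 0$. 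A positive semidefinite Hessian of vanishing trace must vanish, so $\Hess_{\mathsf{g}} f \equiv 0$, i.e. $\grad_{\mathsf{g}} f$ is parallel. A nonzero parallel gradient field splits $\mathcal{M}$ locally as a Riemannian product with an $\mathbb{R}$-factor along which $f$ is affine, and completeness promotes this to a global structure forcing $\Vol(\mathcal{M}) = \infty$, contradicting the hypothesis. Hence $\grad_{\mathsf{g}} f \equiv 0$ and $f$ is constant.

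For part~\eqref{thm:sparseness-2}, let $f \in A(\mathcal{M})$. By the boxed observation above and Theorem~\ref{thm: g-convex}, the intrinsic Hessian satisfies $\Hess f(p) \succeq 0$ at every critical point $p$. Since $\mathcal{M}$ is compact, $f$ attains a global maximum at some $p$, where $\Hess f(p) \preceq 0$; therefore $\Hess f(p) = 0$, so the maximum is degenerate. Let $U_0 \subseteq \mathcal{C}^{\infty}(\mathcal{M})$ be the set of functions whose global maximum is attained at a point with negative-definite Hessian. Then $U_0 \cap A(\mathcal{M}) = \emptyset$. I would show $U_0$ is open and dense in the compact-open $\mathcal{C}^\infty$ topology: density by a standard local Morse perturbation supported near the maximum set, and openness by a compactness argument (a $\mathcal{C}^2$-small perturbation retains a nondegenerate maximum near $p$ and stays strictly below it elsewhere). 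Consequently $A(\mathcal{M}) \subseteq \mathcal{C}^{\infty}(\mathcal{M}) \setminus U_0$, a closed set with empty interior, and since any subset of a nowhere dense set is nowhere dense, $A(\mathcal{M})$ is nowhere dense.

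For part~\eqref{thm:sparseness-3}, take $U$ to be the set of Morse polynomials in $\mathbb{R}[x_1,\dots,x_n]_{\le d}$, those whose critical points are all nondegenerate. The critical points are the common zeros of $\partial f/\partial x_1,\dots,\partial f/\partial x_n$, and the presence of a degenerate critical point is cut out by the vanishing of a discriminant, a nonzero polynomial in the coefficients; hence $U$ is Zariski-open, so Euclidean open and dense, and proper (for $n \ge 2$ the polynomial $x_1^2$ has a hyperplane of critical points and lies outside $U$). Morse polynomials have only finitely many, hence isolated, critical points, so $C_f$ is discrete for $f \in U$; if moreover $f \in A^{\mathsf{c}}(\mathbb{R}^n)$, then $f$ is g-convex with respect to a geodesically complete connection and $C_f$ is discrete, so Proposition~\ref{prop:nec-cond2} gives $|C_f| \le 1$. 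The main obstacle is the analytic core of part~\eqref{thm:sparseness-1}, namely establishing $\Delta_{\mathsf{g}} f \equiv 0$: the finite-volume Liouville property requires justifying integrability of $\grad_{\mathsf{g}} f$ from the geometry, which is delicate precisely because g-convex functions tend to be unbounded above along geodesics. By contrast, parts~\eqref{thm:sparseness-2} and~\eqref{thm:sparseness-3} are routine once the connection-independence of the Hessian at critical points and the Morse/discriminant genericity are in hand.
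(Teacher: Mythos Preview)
Your proofs of parts~\eqref{thm:sparseness-2} and~\eqref{thm:sparseness-3} are correct and close in spirit to the paper's. For~\eqref{thm:sparseness-2} the paper argues slightly differently: rather than inferring $\Hess f(x_0)=0$ at a maximizer via Proposition~\ref{prop:necessary condition}, it runs short geodesics through $x_0$ and uses convexity of $f\circ\gamma$ to force $f$ to be constant on a neighbourhood of $x_0$, contradicting finiteness of $C_f$; hence no Morse function lies in $A(\mathcal{M})$, and since Morse functions form an open dense set one concludes. Your approach through the set $U_0$ of functions with a nondegenerate global maximum is a legitimate variant and arguably tidier, since it never invokes geodesics. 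For~\eqref{thm:sparseness-3} your argument and the paper's are essentially identical; the paper phrases the Zariski-open condition as ``$f$ has $(d-1)^n$ distinct complex critical points'' rather than ``$f$ is Morse'', but these cut out the same generic locus and both feed into Proposition~\ref{prop:nec-cond2} in the same way.

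The real gap is in part~\eqref{thm:sparseness-1}, and you correctly flag it yourself. The step $\int_{\mathcal{M}}\Delta_{\mathsf{g}} f\,dV_{\mathsf{g}}=0$ is not available without integrability of $\grad_{\mathsf{g}} f$, and nothing in the hypotheses supplies it: on a complete noncompact manifold of finite volume a g-convex function is typically unbounded with unbounded gradient, so Gaffney-type Stokes theorems simply do not apply. The subsequent splitting step from a parallel gradient also needs more care than a single sentence (de Rham decomposition is only local absent simple connectivity). The paper does not attempt to prove~\eqref{thm:sparseness-1} at all --- it just cites \cite[Proposition~2.2]{BO69}. The Bishop--O'Neill argument is geometric rather than analytic: one shows directly that a nonconstant convex function produces totally convex open sets of infinite volume. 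If you want a self-contained proof you should follow that route; the Laplacian approach, as you suspect, does not close.
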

\begin{proof}
\eqref{thm:sparseness-1} is proved in \cite[Proposition~2.2]{BO69}.  For \eqref{thm:sparseness-2},  we notice that if $f\in \mathcal{C}^{\infty}(\mathcal{M})$ is  g-convex with respect to some  connection with finite $C_f$,  then $f$ is a constant.   Indeed,  the compactness of $\mathcal{M}$ implies the existence of a maximizer $x_0\in \mathcal{M}$ of $f$.  Let $\gamma: [-\delta,  \delta] \to \mathcal{M}$ be a geodesic passing through $x_0 = \gamma(0)$.  Since $f \circ \gamma$ is g-convex,  we have $(f\circ \gamma(-\delta) + f\circ \gamma(\delta))/2 \ge f \circ \gamma(0)$.  The maximality of $f(x_0)$ implies $f\circ \gamma(\delta) = f\circ \gamma(-\delta) = f(x_0)$.  Since $\delta > 0$ can be arbitrarily small,  we conclude that $f$ is a constant around $x_0$,  which contradicts to the assumption that $C_f$ is finite.  Note that for a Morse function $f$ on a compact manifold, critical points are isolate\cite[Corollary 2.3,Chapter 1]{MR1963} and hence $C_f$ is finite. We recall that the set $S$ of Morse functions is dense in $\mathcal{C}^{\infty}(\mathcal{M})$ \cite[Theorem~1.2.5]{AD14},  while the set of constant functions is nowhere dense in $S$.  Hence $A(\mathcal{M})$ is nowhere dense in $\mathcal{C}^{\infty}(\mathcal{M})$.  To prove \eqref{thm:sparseness-3},  we recall that there is a Zariski open subset $U \subseteq \mathbb{R}[x_1,\dots,  x_n]_{\le d}$ such that every $f \in U$ has $(d-1)^n$ complex critical points.  Therefore,  by Proposition~\ref{prop:nec-cond2} we may conclude that $f$ has at most one critical point if $f$ also lies in $A^{\mathsf{c}}$.
\end{proof}
\section{Criteria for Levi-Civita connection}\label{sec:Levi-Civita}
In Section \ref{sec:criteria},  we obtain some criteria for a function to be  g-convex with respect to some  connection.  However,  it is well-known that a connection is not necessarily the Levi-Civita connection determined by a pseudo-Riemannian metric.  Let $\nabla$ be an  connection  on $\mathcal{M}$ and let $(U;x_1,\dots,  x_n)$ be a local chart of $\mathcal{M}$.  Suppose that $\Gamma^k_{ij} \in \mathcal{C}^\infty(M),  1\le i,j,k, \le n$ are Christoffel symbols of $\nabla$ on $U$.  By definition,  $\nabla = \nabla_{\mathsf{g}}$ for some pseudo-Riemannian metric $\mathsf{g}$ if and only if $\mathsf{g}$ is a solution of \eqref{eq_LC_connection}.  We notice that \eqref{eq_LC_connection} has a trivial solution $\mathsf{g}_{ij} = 0, 1 \le i,  j \le n$.  Unfortunately,  this is not a desired solution since $\mathsf{g} = 0$ is not a pseudo-Riemannian metric on $\mathcal{M}$.  In the literature \cite{Alekseevski68,s1973,Bryant87, KN1996vol1,Merkulov99},  the existence of a pseudo-Riemannian metric $\mathsf{g}$ on $\mathcal{M}$ such that $\nabla = \nabla_{\mathsf{g}}$ is discussed in terms of the holonomy group of $\nabla$,  which is not possible to compute in general.  This section is devoted to a brief discussion on practical conditions for the existence of $\mathsf{g}$.

Let $x\in \mathcal{M}$ be a fixed point.  For each integer $k \ge 0$,  we define $\mathfrak{L}_k$ to be the Lie subalgebra of $\End(\mathbb{T}_x \mathcal{M})$ generated by linear maps
\begin{equation}\label{eq:Lk}
X_{v_1,\dots,  v_{j+2}}:  \mathbb{T}_x \mathcal{M} \to \mathbb{T}_x \mathcal{M},\quad X_{v_1,\dots,  v_{j+2}}(u) \coloneqq (\nabla^j R(v_1,v_2))(u, v_3,  \dots,  v_{j+2}),
\end{equation}
where $v_1,\dots,  v_{j +2} \in \mathbb{T}_x \mathcal{M}$ and $0 \le j \le k$.
\begin{proposition}[Stability]\label{prop:LC-criterion}
Let $\nabla$ be an analytic  connection on a connected and simply connected manifold $\mathcal{M}$ and let $x\in \mathcal{M}$ be a fixed point.  Then there exists an integer $(\dim \mathcal{M})^2 \geq k_0 \ge 0$ such that $\mathfrak{L}_{k_0} = \mathfrak{L}_{k}$ for any $k \ge k_0$.  If there exists a non-degenerate symmetric bilinear form $B: \mathbb{T}_x \mathcal{M} \times \mathbb{T}_x \mathcal{M} \to \mathbb{R}$ such that 
\[
B(Xu,  v) + B(u,  Xv)=0
\]
for any $X \in \mathfrak{L}_{k_0}$,  then $\nabla = \nabla_{\mathsf{g}}$ for some pseudo-Riemannian metric $\mathsf{g}$ on $\mathcal{M}$ of the same signature as $B$.
\end{proposition}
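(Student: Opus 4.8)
The plan is to recognize the stabilized Lie algebra $\mathfrak{L}_{k_0}$ as the holonomy algebra of $\nabla$ at $x$, and then to manufacture the desired metric by parallel transport of $B$. I would first dispose of the stabilization claim. The inclusions $\mathfrak{L}_0 \subseteq \mathfrak{L}_1 \subseteq \cdots$ are immediate, since increasing $k$ only adds generators of the form $X_{v_1,\dots,v_{j+2}}$, and all the $\mathfrak{L}_k$ are Lie subalgebras of $\End(\mathbb{T}_x\mathcal{M})$, whose dimension is at most $(\dim\mathcal{M})^2$. The structural input I would borrow from the theory of the infinitesimal holonomy algebra (cf. \cite{KN1996vol1}) is that passing from order $j$ to order $j+1$ amounts to applying one fixed first-order covariant-differentiation operator, via $\nabla^{j+1}R=\nabla(\nabla^jR)$ and the Leibniz rule; hence once $\mathfrak{L}_{k_0}=\mathfrak{L}_{k_0+1}$ the chain is constant for all larger indices. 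Thus the chain strictly increases up to $k_0$ and is constant afterwards, so $k_0\le \dim\mathfrak{L}_{k_0}\le(\dim\mathcal{M})^2$. I write $\mathfrak{L}_\infty\coloneqq\mathfrak{L}_{k_0}=\bigcup_k\mathfrak{L}_k$ for the stable value, which is by definition the infinitesimal holonomy algebra at $x$.

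The crucial step is to identify $\mathfrak{L}_\infty$ with the genuine holonomy algebra. Let $H\coloneqq\operatorname{Hol}_x(\nabla)\subseteq\GL(\mathbb{T}_x\mathcal{M})$ and $\mathfrak{h}\coloneqq\operatorname{Lie}(H)$; since $\mathcal{M}$ is connected and simply connected, $H=H^0$ is connected. I would prove $\mathfrak{h}=\mathfrak{L}_\infty$. The inclusion $\mathfrak{L}_\infty\subseteq\mathfrak{h}$ is the Ambrose--Singer theorem together with its differentiated form, which places all covariant derivatives of the curvature inside $\mathfrak{h}$. For the reverse inclusion I would exploit analyticity: by Ambrose--Singer, $\mathfrak{h}$ is spanned by the parallel transports $P_\gamma^{-1}R_y(\cdot,\cdot)P_\gamma$ of curvature operators at nearby points $y$, and expanding along $\gamma$ in a power series (legitimate because $\nabla$ is analytic) exhibits each such operator as a convergent series whose partial sums lie in the closed finite-dimensional space $\mathfrak{L}_\infty$; hence $\mathfrak{h}\subseteq\mathfrak{L}_\infty$. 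This is precisely Nijenhuis' theorem that for an analytic connection the infinitesimal holonomy algebra coincides with the restricted holonomy algebra.

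With the identification $\mathfrak{L}_{k_0}=\mathfrak{h}$ in hand, the hypothesis $B(Xu,v)+B(u,Xv)=0$ for all $X\in\mathfrak{L}_{k_0}$ says exactly $\mathfrak{h}\subseteq\mathfrak{so}(B)$. Because $H$ is connected, this infinitesimal invariance integrates to $B(hu,hv)=B(u,v)$ for every $h\in H$, so $B$ is holonomy invariant. I would then define $\mathsf{g}$ pointwise by transporting $B$: for $y\in\mathcal{M}$ and a path $\gamma$ from $x$ to $y$ with parallel transport $P_\gamma\colon\mathbb{T}_x\mathcal{M}\to\mathbb{T}_y\mathcal{M}$, set $\mathsf{g}_y(u,v)\coloneqq B(P_\gamma^{-1}u,P_\gamma^{-1}v)$. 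Holonomy invariance of $B$ makes this independent of $\gamma$ (two paths differ by a loop, whose transport lies in $H$), and connectedness of $\mathcal{M}$ guarantees $\mathsf{g}$ is defined everywhere. As the parallel extension of a nondegenerate symmetric form, $\mathsf{g}$ is a smooth (in fact analytic) nondegenerate symmetric $2$-tensor with the same signature as $B$, hence a pseudo-Riemannian metric, and it satisfies $\nabla\mathsf{g}=0$ by construction.

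Finally, parallelism $\nabla\mathsf{g}=0$ is exactly the metric-compatibility condition $Z(\mathsf{g}(X,Y))=\mathsf{g}(\nabla_Z X,Y)+\mathsf{g}(X,\nabla_Z Y)$; since every connection in this paper is torsion-free ($\nabla_XY-\nabla_YX=[X,Y]$), the uniqueness of the Levi-Civita connection yields $\nabla=\nabla_{\mathsf{g}}$, as claimed. I expect the main obstacle to be the identification $\mathfrak{h}=\mathfrak{L}_\infty$ in the second paragraph: this is the one place where analyticity is indispensable (the statement genuinely fails for general smooth $\nabla$), and making the Ambrose--Singer power-series comparison rigorous is the real work, whereas the dimension count and the transport construction are comparatively routine.
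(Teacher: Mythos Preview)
Your proposal is correct and follows the same architecture as the paper's proof: establish stabilization of the chain $(\mathfrak{L}_k)_k$ via the Leibniz rule for $\nabla^{k+2}R$, identify the stable value with the holonomy Lie algebra at $x$ (this is where analyticity and simple connectedness enter, as you correctly flag), and then produce the metric from the holonomy-invariant form $B$. The paper carries out the stabilization step in detail exactly along the lines you sketch, and then simply \emph{cites} Kobayashi--Nomizu for the identification $\mathfrak{L}_{k_0}=\operatorname{Lie}(\operatorname{Hol}_x(\nabla))$ and Schmidt~\cite{s1973} for the existence of $\mathsf{g}$. Your explicit parallel-transport construction of $\mathsf{g}$ is precisely the standard argument behind that citation, so you are unpacking rather than bypassing it; the payoff is that your write-up is self-contained and makes it visible why $\mathsf{g}$ inherits the signature of $B$ and why torsion-freeness finishes the identification $\nabla=\nabla_{\mathsf{g}}$.
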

\begin{proof}
Denote $n \coloneqq \dim \mathcal{M}$.  By definition,  we have $\mathfrak{L}_k\subseteq \mathfrak{L}_l$ whenever $k\leq l$.  Since $\dim \End(\mathbb{T}_x \mathcal{M}) = n^2$,  there exists $1 \le k_0 \le n^2$ such that $\mathfrak{L}_{k_0+1}=\mathfrak{L}_{k_0}$.  We claim that $\mathfrak{L}_{k_0} = \mathfrak{L}_{k}$ for all $k \ge k_0$.  It suffices to prove that $\mathfrak{L}_{k}=\mathfrak{L}_{k+1}$ implies $\mathfrak{L}_{k+2}\subseteq \mathfrak{L}_{k+1}$,  for any positive integer $k$.  By \cite[Lemma~1 of Chapter~3]{KN1996vol1}),  any $X \coloneqq X_{v_1,\dots,v_{k+4}} \in \mathfrak{L}_{k+2}$ can be written as
\begin{align}\label{Lie_alg}
X &=\nabla_{v_{k+4}}((\nabla^{k+1}R(v_1,v_2))(v_3,\dots,v_{k+3})) -\nabla^{k+1}R(\nabla_{v_{k+4}}v_1,v_2)(v_3,\dots,v_{k+3}) \nonumber  \\
    &-\nabla^{k+1}R(v_1,\nabla_{v_{k+4}}v_2)(v_3,\dots,v_{k+3}) - \sum_{i=3}^{k+3}\nabla^{k+1}(R(v_1,v_2))(v_3,\dots,\nabla_{v_{k+4}}v_i,\dots,v_{k+3}). 
\end{align}
We denote $Y \coloneqq \nabla_{v_{k+4}}((\nabla^{k+1}R(v_1,v_2))(v_3,\dots,v_{k+3}))$ and prove that $Y \in \mathfrak{L}_{k+1}$.  By assumption, $(\nabla^{k+1}R(v_1,v_2))(v_3,\dots,v_{k+3})\in \mathfrak{L}_{k+1} =  \mathfrak{L}_{k}$,  hence $(\nabla^{k+1}R(v_1,v_2))(v_3,\dots,v_{k+3})$ is an $\mathbb{R}-$linear combination of endomorphisms of the form $(\nabla^{j}R(u_1,u_2))(u_3,\dots,u_{j+2}))$ for $0\leq j\leq k$.  According to \eqref{Lie_alg},  $\nabla_{v_{k+4}} ( (\nabla^{j} R(u_1,u_2)) (u_3,\dots,u_{j+2})) \in \mathfrak{L}_{k+1}$ which implies $Y \in \mathfrak{L}_{k+1}$. 

Since $\nabla$ is analytic,  \cite[Chapters 2 and 3]{KN1996vol1} implies that $\mathfrak{L} \coloneqq \bigcup_{k=0}^\infty \mathfrak{L}_{k}$ is the Lie algebra of the holonomy group of $\nabla$ at $x$.  As $\mathfrak{L}_{k_0} = \mathfrak{L}_{k}$ for any $k \ge k_0$,  we simply have $\mathfrak{L} = \mathfrak{L}_{k_0}$.  This together with \cite[Theorem]{s1973} indicates the existence of $\mathsf{g}$.
\end{proof}
We first notice that $\nabla$ in Proposition~\ref{prop:LC-criterion} is assumed to be analytic.  This assumption ensures that $\mathfrak{L}_{k_0}$ is the Lie algebra of the holonomy group of $\nabla$ at $x$.  As illustrated by the example in \cite[Section~3]{s1973},  this is not always true for smooth connections.  We also remark that the Lie algebra $\mathfrak{L}$ in the proof of Proposition~\ref{prop:LC-criterion} and \cite[Chapters 2 and 3]{KN1996vol1} is generated by infinitely many linear maps,  but it must be finitely generated since it is finite dimensional.  Proposition~\ref{prop:LC-criterion} explicitly provides a finite set of generators for $\mathfrak{L}$.  Lastly,  Proposition~\ref{prop:LC-criterion} ensures the existence of $\mathsf{g}$.  We refer the interested readers to \cite{s1973} for an explicit construction of $\mathsf{g}$.  The corollary that follows is a direct consequence of Proposition~\ref{prop:LC-criterion},  which will be useful in the sequel.

\begin{corollary}[Sufficient condition for Levi-Civita connection]\label{sufficient_condition_for_LC}
Let $\nabla$ be an analytic  connection on a connected and simply connected manifold $\mathcal{M}$ and let $x\in \mathcal{M}$ be a fixed point.  If $\mathfrak{L}_{k_0} = 0$,  then for any integer $0 \le p \le \dim \mathcal{M} \eqqcolon n$,  there is a pseudo-Riemannian metric $\mathsf{g}$ on $\mathcal{M}$ of signature $(p,n-p)$ such that $\nabla = \nabla_{\mathsf{g}}$.
\end{corollary}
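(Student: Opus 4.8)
The plan is to deduce this directly from Proposition~\ref{prop:LC-criterion} by exploiting the fact that its compatibility condition becomes vacuous when $\mathfrak{L}_{k_0} = 0$. First I would observe that since $\mathfrak{L}_{k_0}$ is the zero subalgebra of $\End(\mathbb{T}_x \mathcal{M})$, its only element is the zero endomorphism $X = 0$. Consequently, for \emph{any} symmetric bilinear form $B$ on $\mathbb{T}_x \mathcal{M}$ and any $u, v \in \mathbb{T}_x \mathcal{M}$, the required identity $B(Xu, v) + B(u, Xv) = 0$ holds trivially, as both summands vanish. Thus the hypothesis of Proposition~\ref{prop:LC-criterion} imposes no constraint on $B$ at all, and we are free to choose $B$ with whatever signature we like.

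Next, I would fix the desired signature. For each integer $0 \le p \le n$, the standard diagonal form with $p$ entries equal to $+1$ and $n - p$ entries equal to $-1$ furnishes a non-degenerate symmetric bilinear form $B$ on $\mathbb{T}_x \mathcal{M}$ of signature $(p, n-p)$. Since this $B$ automatically satisfies the (now vacuous) compatibility condition with $\mathfrak{L}_{k_0}$, Proposition~\ref{prop:LC-criterion} applies verbatim and yields a pseudo-Riemannian metric $\mathsf{g}$ on $\mathcal{M}$ with $\nabla = \nabla_{\mathsf{g}}$ and of the same signature as $B$, namely $(p, n-p)$. Letting $p$ range over $0, \dots, n$ produces a metric of each prescribed signature, which is exactly the assertion.

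There is essentially no obstacle to overcome: the corollary is a formal consequence of Proposition~\ref{prop:LC-criterion}, and the only substantive ingredient is the elementary linear-algebra fact that every signature $(p, n-p)$ is realized by some non-degenerate symmetric bilinear form on an $n$-dimensional space. The single point worth a line of justification is that the standing hypotheses of Proposition~\ref{prop:LC-criterion}---analyticity of $\nabla$ together with connectedness and simple connectedness of $\mathcal{M}$---are inherited unchanged in the corollary, so the proposition may be invoked with no extra preparation.
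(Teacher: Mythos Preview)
Your proposal is correct and matches the paper's treatment: the paper states that the corollary is a direct consequence of Proposition~\ref{prop:LC-criterion} and gives no further argument, and your observation that the compatibility condition becomes vacuous when $\mathfrak{L}_{k_0}=0$ is precisely the intended deduction.
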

We conclude this section by a simple criterion for the non-existence of $\mathsf{g}$ such that $\nabla$ is Levi-Civita with respect to $\mathsf{g}$.
\begin{proposition}[Necessary condition for Levi-Civita connection]\label{suffient_necessary_condition_for_LC}
Let $\nabla$ be a analytic  connection on a manifold $\mathcal{M}$ and let $x\in \mathcal{M}$ be a fixed point.  If $\dim{\mathfrak{L}_k}> (n^2-n)/2$ for some $k$,  then there is no pseudo-Riemannian metric $\mathsf{g}$ on $\mathcal{M}$ such that $\nabla = \nabla_{\mathsf{g}}$.
\end{proposition}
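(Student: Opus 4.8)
The plan is to argue by contraposition: assuming $\nabla = \nabla_{\mathsf{g}}$ for some pseudo-Riemannian metric $\mathsf{g}$, I will show that each $\mathfrak{L}_k$ is forced to lie inside the Lie algebra of skew-symmetric endomorphisms with respect to $B \coloneqq \mathsf{g}_x$, whose dimension is exactly $(n^2-n)/2$. This immediately rules out $\dim \mathfrak{L}_k > (n^2-n)/2$. First I would record the target subalgebra: for the non-degenerate symmetric form $B = \mathsf{g}_x$ on $\mathbb{T}_x\mathcal{M}$, set $\mathfrak{so}(B) \coloneqq \{X \in \End(\mathbb{T}_x\mathcal{M}) : B(Xu, v) + B(u, Xv) = 0 \text{ for all } u, v\}$. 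This is a Lie subalgebra of $\End(\mathbb{T}_x\mathcal{M})$ of dimension $\binom{n}{2} = (n^2-n)/2$, independent of the signature of $B$.

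The heart of the argument is to show that each generator $X_{v_1,\dots,v_{j+2}}$ of $\mathfrak{L}_k$ lies in $\mathfrak{so}(B)$. The case $j = 0$ is the standard antisymmetry of the Riemann curvature of a Levi-Civita connection, $\mathsf{g}(R(v_1,v_2)u, w) = -\mathsf{g}(R(v_1,v_2)w, u)$, so $R(v_1, v_2) \in \mathfrak{so}(\mathsf{g}_x)$. For $j \ge 1$ I would lower an index with $\mathsf{g}$ and exploit that the Levi-Civita connection is metric, i.e. $\nabla \mathsf{g} = 0$. Writing $\tilde R(v_1, v_2, u, w) \coloneqq \mathsf{g}(R(v_1,v_2)u, w)$ and appending the differentiation directions, the parallelism $\nabla \mathsf{g} = 0$ allows covariant differentiation to commute with the metric contraction, so that $\mathsf{g}\big((\nabla^j R(v_1,v_2))(u, v_3,\dots,v_{j+2}), w\big)$ equals the corresponding component of $\nabla^j \tilde R$. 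Since covariant differentiation also commutes with permutation of tensor slots, the antisymmetry of $\tilde R$ in its $u$- and $w$-slots is inherited by $\nabla^j \tilde R$; translating back, the endomorphism $u \mapsto (\nabla^j R(v_1,v_2))(u, v_3, \dots, v_{j+2})$ is $B$-skew, i.e. $X_{v_1,\dots,v_{j+2}} \in \mathfrak{so}(B)$.

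Finally, since $\mathfrak{so}(B)$ is closed under the bracket and contains all the generators, it contains the generated Lie algebra $\mathfrak{L}_k$ for every $k$; hence $\dim \mathfrak{L}_k \le \dim \mathfrak{so}(B) = (n^2 - n)/2$, which contradicts the hypothesis. I expect the one genuinely delicate step to be the bookkeeping in the $j \ge 1$ case, namely verifying that $\nabla\mathsf{g} = 0$ indeed lets us pass the metric through the iterated covariant derivative and that the slot-permutation symmetry survives differentiation; once phrased in index notation as $\nabla^j \tilde R_{\dots u w \dots} = -\nabla^j \tilde R_{\dots w u \dots}$ this becomes routine. It is worth emphasizing that this direction requires neither simple connectivity nor the holonomy interpretation underlying Proposition~\ref{prop:LC-criterion}; it is a purely pointwise, algebraic consequence of $\nabla\mathsf{g}=0$.
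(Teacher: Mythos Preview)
Your proposal is correct and reaches the same conclusion as the paper, but by a more elementary route. The paper establishes the inclusion $\mathfrak{L}_k \subseteq \mathfrak{so}(\mathsf{g}_x)$ by citing Kobayashi--Nomizu, implicitly invoking that the infinitesimal holonomy algebra of a metric connection sits inside the orthogonal algebra of $\mathsf{g}_x$. You instead verify directly that each generator $X_{v_1,\dots,v_{j+2}}$ is $\mathsf{g}_x$-skew, using the antisymmetry $\mathsf{g}(R(v_1,v_2)u,w)=-\mathsf{g}(R(v_1,v_2)w,u)$ together with $\nabla\mathsf{g}=0$ to propagate this antisymmetry through iterated covariant derivatives; since $\mathfrak{so}(\mathsf{g}_x)$ is a Lie subalgebra, the generated algebra stays inside. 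Your argument is self-contained, bypasses the holonomy interpretation entirely, and makes clear that analyticity of $\nabla$ plays no role in this direction---a point the paper's black-box citation obscures. The paper's approach, on the other hand, situates the result within the broader holonomy framework used in Proposition~\ref{prop:LC-criterion}, which gives conceptual unity to the two criteria.
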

\begin{proof}
Suppose $\nabla = \nabla_{\mathsf{g}}$ for some pseudo-Riemannian metric $\mathsf{g}$.  Then for each $x\in \mathcal{M}$,  $\mathsf{g}_x$ is a non-degenerate bilinear form on $\mathbb{T}_x(\mathcal{M})$.  We denote 
\[
\mathfrak{g} \coloneqq \left\lbrace
L \in \End(\mathbb{T}_x(\mathcal{M})): B(Lu,v) + B(u,Lv) =0, \; u,v\in \mathbb{T}_x(\mathcal{M})
\right\rbrace.
\]
By \cite[Chapters 2 and 3]{KN1996vol1},  $\mathfrak{L} \coloneqq \bigcup_{k=0}^\infty \mathfrak{L}_{k} \subseteq \mathfrak{g}$.  Since $\dim \mathfrak{g} = n(n-1)/2$,  this contradicts to the assumption that $\mathfrak{L}_k > n(n-1)/2$.
\end{proof}

\section{g-convex univariate polynomials}
This section is devoted to a discussion on  g-convex univariate polynomials.  To begin with,  we establish the following lemma,  which will be used repeatedly in the rest of this paper.
\begin{lemma}[Local behavior of a non-negative function]\label{key lemma}
Let $f \in \mathcal{C}^{\infty}(\mathbb{R}^n)$ be a non-negative function.  Suppose moreover that $f=x_i^k g$ for some $1 \le i \le n$,  $k\in\mathbb{N}$ and $g\in \mathcal{C}^{\infty}(\mathbb{R}^n)$.  Denote $g_0 \coloneqq  \lim_{x_i\to 0}g \in \mathcal{C}^{\infty}(\mathbb{R}^{n-1})$.  Then we have:
    \begin{enumerate}[(a)]
        \item If $k$ is even, then $g_0$ is non-negative.  \label{key lemma:item1}
        \item If $k$ is odd,  then $g_0 = 0$. \label{key lemma:item2}
    \end{enumerate}
\end{lemma}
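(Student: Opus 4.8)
The plan is to reduce the statement to a one-variable sign analysis in the single coordinate $x_i$, carried out pointwise in the remaining coordinates. Write a point of $\mathbb{R}^n$ as $(x',t)$, where $x'\in\mathbb{R}^{n-1}$ collects all coordinates except the $i$-th and $t=x_i$. Since $g$ is smooth, hence continuous, we have $g_0(x')=\lim_{t\to 0}g(x',t)=g(x',0)$; in particular $g_0$ is the restriction of $g$ to the hyperplane $\{x_i=0\}$ and is therefore automatically smooth, so only the sign claims require argument. I fix $x'\in\mathbb{R}^{n-1}$ throughout, and it suffices to determine the sign of the single number $g_0(x')=g(x',0)$.

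For part \eqref{key lemma:item1}, suppose $k$ is even. For every $t\neq 0$ we have $t^k>0$, and the hypothesis $f\geq 0$ gives $g(x',t)=f(x',t)/t^k\geq 0$. Letting $t\to 0$ and using continuity of $g$, the limit of nonnegative numbers is nonnegative, so $g_0(x')=g(x',0)\geq 0$. As $x'$ was arbitrary, $g_0\geq 0$ on $\mathbb{R}^{n-1}$.

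For part \eqref{key lemma:item2}, suppose $k$ is odd, so that $t^k$ has the same sign as $t$. I take the right-hand limit first: for $t>0$ we have $t^k>0$, whence $g(x',t)=f(x',t)/t^k\geq 0$, and letting $t\to 0^+$ gives $g_0(x')\geq 0$. Then I take the left-hand limit: for $t<0$ we have $t^k<0$, so $f\geq 0$ forces $g(x',t)=f(x',t)/t^k\leq 0$, and letting $t\to 0^-$ gives $g_0(x')\leq 0$. Continuity of $g$ guarantees that both one-sided limits equal $g(x',0)=g_0(x')$, so $g_0(x')\geq 0$ and $g_0(x')\leq 0$ simultaneously, forcing $g_0(x')=0$.

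The argument is elementary, and the only real subtlety is the odd case: one must combine the opposite signs of $t^k$ on the two sides of the hyperplane $\{x_i=0\}$ with the continuity of $g$ to squeeze the common boundary value $g_0(x')$ to zero. No genuine obstacle arises, and the smoothness of $g_0$ needs no separate verification, since it is merely a coordinate restriction of the smooth function $g$.
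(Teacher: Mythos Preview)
Your proof is correct and follows essentially the same approach as the paper's: part~(a) is handled identically, and for part~(b) both arguments exploit the sign change of $t^k$ across $t=0$ together with continuity of $g$. The only cosmetic difference is that the paper phrases (b) as a proof by contradiction while you give the direct two-sided squeeze, which is arguably cleaner.
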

\begin{proof}
Without loss of generality,  we assume that $i = 1$ so that $f = x_1^k g$ and $g_0(x_2,\dots, x_n) = g(0,x_2,\dots, x_n)$.  If $k$ is even, then $x_1^k>0$ for all $x_1\neq 0$.  Since $f$ is nonnegative, 
\[
g_0(x_2,\dots,x_n)=\lim_{x_1\to 0}\dfrac{f(x_1,\dots,x_n)}{x_1^k}\geq 0
\] 
for any $(x_2,\dots,\dots,x_n)\in \mathbb{R}^{n-1}$ and this proves \eqref{key lemma:item1}.

We prove \eqref{key lemma:item2} by contradiction.  If $g_0\neq 0$,  then there is some $(a_2,\dots,a_n)\in \mathbb{R}^{n-1}$ such that $c:=g_0(a_2,\dots, a_n)\neq 0$.  By definition,  we have
\[
\sign(c) \lim_{x_1 \to 0}  \dfrac{f(x_1,a_2,\dots,a_n)}{x_1^k}   = |c| > \frac{|c|}{2}.
\]
Thus for a sufficiently small $\delta >0$ and any $x_1$ such that $\sign(c)  x_1 \in (-\delta,  0)$,  we may derive
\[
f(x_1,a_2,\dots,  a_n) < \frac{c x_1^k}{2} < 0.
\]
This contradicts to the non-negativity of $f$. 
\end{proof}

\begin{lemma}\label{1dim-connection}
For any smooth  connection $\nabla$ on $\mathbb{R}$,  there is a Riemmannian metric $\mathsf{g}$ such that $\nabla = \nabla_{\mathsf{g}}$.  As a consequence,  $f\in \mathcal{C}^{\infty}(\mathbb{R})$ is  g-convex with respect to some smooth connection if and only if it is g-convex with respect to some Riemannian metric. 
\end{lemma}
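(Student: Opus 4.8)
The plan is to reduce the existence of $\mathsf{g}$ to solving a single linear first-order ODE. First I would observe that in dimension one a connection $\nabla$ on $\mathbb{R}$ is completely determined by its single Christoffel symbol, a smooth function $\Gamma \coloneqq \Gamma^1_{11} \in \mathcal{C}^{\infty}(\mathbb{R})$, while a Riemannian metric has the form $\mathsf{g} = g\, dx \otimes dx$ with $g \in \mathcal{C}^{\infty}(\mathbb{R})$ strictly positive. Specializing the compatibility equation \eqref{eq_LC_connection} to the only index choice $i=j=k=1$, the requirement $\nabla = \nabla_{\mathsf{g}}$ collapses to the scalar identity $g'(x) = 2\Gamma(x)\, g(x)$.

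Next I would solve this ODE explicitly rather than invoking any existence theorem. Since $\Gamma$ is smooth on all of $\mathbb{R}$, the function $g(x) \coloneqq \exp\!\bigl(2\int_0^x \Gamma(t)\, dt\bigr)$ is globally defined, smooth, and satisfies $g' = 2\Gamma g$; crucially, its exponential form makes it strictly positive everywhere. Hence $\mathsf{g} \coloneqq g\, dx \otimes dx$ is a genuine Riemannian metric on $\mathbb{R}$ whose Levi-Civita connection has Christoffel symbol $\tfrac{1}{2} g'/g = \Gamma$, so $\nabla = \nabla_{\mathsf{g}}$. This proves the first assertion.

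Finally, the stated consequence follows formally from the construction. One implication is immediate: every Riemannian metric induces a smooth connection, so g-convexity with respect to a metric trivially entails g-convexity with respect to a smooth connection. For the reverse implication, if $f \in \mathcal{C}^{\infty}(\mathbb{R})$ is g-convex with respect to a smooth connection $\nabla$, the first part supplies a Riemannian metric $\mathsf{g}$ with $\nabla = \nabla_{\mathsf{g}}$; since g-convexity is defined purely through the connection $\nabla$, the function $f$ is automatically g-convex with respect to $\mathsf{g}$.

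I do not anticipate a genuine obstacle: the content is entirely the one-dimensional reduction of the Levi-Civita compatibility equation to a solvable linear ODE. The only points demanding care are that the solution $g$ extends to all of $\mathbb{R}$, which holds because $\Gamma$ is globally smooth, and that $g$ is \emph{positive} (so that $\mathsf{g}$ is Riemannian rather than merely pseudo-Riemannian), which the exponential form guarantees without any further hypothesis.
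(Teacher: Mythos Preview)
Your proof is correct and follows essentially the same route as the paper: both write the metric as a positive function times $dx^2$, reduce the Levi-Civita condition to a single first-order linear ODE for that function in terms of the given Christoffel symbol, and solve. You go slightly further by writing down the explicit exponential solution and noting its positivity, whereas the paper simply invokes existence for the ODE; this is a presentational difference, not a mathematical one.
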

\begin{proof}
We observe that a Riemannian metric $\mathsf{g}$ on $\mathbb{R}$ can be written as $\mathsf{g} =  e^{\lambda(x)}dx^2$ for some $\lambda(x) \in \mathcal{C}^{\infty}(\mathbb{R})$.  Then its Christoffel symbol is simply a single function $\Gamma_{11}^1 = \lambda'$.
By the existence of the solution of an ODE,  we obtain the existence of $\mathsf{g}$ such that $\nabla = \nabla_{\mathsf{g}}$.  
\end{proof}
In the following,  we will completely characterize  g-convex univariate polynomials.  According to Theorem~\ref{thm: g-convex} and Lemma ~\ref{1dim-connection},  $f\in \mathbb{R}[x]$ is  g-convex if and only if there exists some $\lambda \in \mathcal{C}^{\infty}(\mathbb{R})$ such that
\begin{equation}\label{eq2_dim1}
d \coloneqq b'+\dfrac{1}{2}b \lambda' \geq 0
\end{equation}
where $b \coloneqq f'$. We decompose $b$ as $b(x)=a\prod_{i=1}^{k}(x-u_i)^{r_i}c(x)$, where $u_1< \cdots <u_k$ are the distinct real roots of $b(x)$ with multiplicities $r_1,\dots, r_k$ respectively,  $a$ is a non-zero real number and $c$ is a monic polynomial with no  real root.  We adopt the convention that $b$ has no real root if $k = 0$.  Then 
\begin{equation}\label{d_frac}
    b'(x)=a\sum_{i=1}^{k}r_i(x-u_i)^{r_i-1}\prod_{j\neq i}(x-u_j)^{r_j}c(x)+a\prod_{i=1}^{k}(x-u_i)^{r_i}c'(x)
\end{equation}

\begin{lemma}\label{lemma_two}
Let $f,  b,  k$ and $r_1,\dots,  r_k$ be as above.  If $k \ge 1$ and $f$ is  g-convex with respect to some smooth connection,  then $r_i$ is odd for all $1 \le i \le k$.
\end{lemma}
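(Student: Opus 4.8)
The plan is to localize at a single real root of $b=f'$ and combine the one-dimensional g-convexity criterion \eqref{eq2_dim1} with the key lemma, Lemma~\ref{key lemma}. Assuming $f$ is g-convex, \eqref{eq2_dim1} provides some $\lambda\in\mathcal{C}^{\infty}(\mathbb{R})$ with $d=b'+\tfrac12 b\lambda'\ge 0$ on all of $\mathbb{R}$. I would fix an index $i$ and argue by contradiction, supposing that $r_i$ is even.

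First I would factor $b$ globally as $b(x)=(x-u_i)^{r_i}h(x)$, where $h(x)\coloneqq a\prod_{j\neq i}(x-u_j)^{r_j}c(x)$ is a polynomial with $h(u_i)\neq 0$, since the $u_j$ are distinct and $c$ has no real root. Differentiating gives $b'=r_i(x-u_i)^{r_i-1}h+(x-u_i)^{r_i}h'$, so substituting into $d$ and extracting the common power $(x-u_i)^{r_i-1}$ yields $d=(x-u_i)^{r_i-1}G$, where $G\coloneqq r_i h+(x-u_i)\bigl(h'+\tfrac12 h\lambda'\bigr)$. Because $h$ and $h'$ are polynomials and $\lambda'$ is smooth, $G\in\mathcal{C}^{\infty}(\mathbb{R})$, and evaluating at $u_i$ gives $G(u_i)=r_ih(u_i)\neq 0$.

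Now I would apply Lemma~\ref{key lemma} to the non-negative smooth function $d$, after translating so that $u_i=0$, with the role of the exponent played by $r_i-1$ and that of $g$ by $G$. Since $r_i$ is assumed even, $r_i-1$ is odd, so part~\eqref{key lemma:item2} forces $\lim_{x\to u_i}G=0$, contradicting $G(u_i)=r_ih(u_i)\neq 0$. Hence $r_i$ must be odd, and since $i$ was arbitrary the conclusion follows for all $1\le i\le k$. The only delicate point is to confirm that the extracted factor $G$ is genuinely smooth (the presence of $\lambda'$ prevents it from being a polynomial, but smoothness suffices for the key lemma) and that $G(u_i)\neq 0$, so that the lemma produces a nonzero limit to contradict; everything else is a direct substitution.
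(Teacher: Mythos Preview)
Your proof is correct and follows essentially the same approach as the paper's: both factor $b=(x-u)^{r}p$ at a root of even multiplicity, extract the factor $(x-u)^{r-1}$ from $d=b'+\tfrac12 b\lambda'\ge 0$, and then invoke Lemma~\ref{key lemma}\eqref{key lemma:item2} on the odd exponent $r-1$ to force the remaining factor to vanish at $u$, contradicting $p(u)\neq 0$.
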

\begin{proof}
    If $b$ has a real root $u$ with even multiplicity $2r \ge 2$,  then we can write $b(x)=(x-u)^{2r}p(x)$ for some polynomial $p$ such that $p(u)\neq 0$.  Since $f$ is  g-convex,  \eqref{eq2_dim1} implies
\[
        d(x)=(x-u)^{2r-1}\left( 2rp(x)+(x-u)p'(x)+\dfrac{1}{2}(x-u)p(x)\lambda'(x) \right) \geq 0
\]   
By Lemma \ref{key lemma},  we have $2rp(u)=0$,  but this contradicts the assumption that $p(u)\neq 0$ and $r>0$.
\end{proof}

\begin{lemma}\label{lemma_one}
Let $f,  b,  k$ and $r_1,\dots,  r_k$ be as above.  If $k \ge 1$ and $f$ is  g-convex with respect to some smooth connection,  then there is at most one $1 \le i \le k$ such that $r_i$ is odd.
\end{lemma}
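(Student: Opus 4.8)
The plan is to reinterpret the defining inequality \eqref{eq2_dim1} as a single monotonicity statement by means of an integrating factor. Set $B \coloneqq e^{\lambda(x)/2}\, b(x)$. A direct computation gives $B' = e^{\lambda/2}\bigl(b' + \tfrac12 b\lambda'\bigr) = e^{\lambda/2} d$, and since $e^{\lambda/2} > 0$ everywhere, the g-convexity condition $d \geq 0$ is \emph{equivalent} to $B' \geq 0$, i.e. to $B$ being non-decreasing on $\mathbb{R}$. Because $e^{\lambda/2}$ never vanishes, $B$ has exactly the same zero set $\{u_1,\dots,u_k\}$ as $b$ and the same sign as $b$ at every point. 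The whole argument will then run on the non-decreasing function $B$.

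Next I would record the behaviour of $B$ near a root of odd multiplicity. Suppose $r_i$ is odd. Writing $b(x)=a\prod_j(x-u_j)^{r_j}c(x)$, every factor except $(x-u_i)^{r_i}$ is nonzero and of constant sign in a small neighbourhood of $u_i$, so $b$ (hence $B$) changes sign across $u_i$ precisely because $(x-u_i)^{r_i}$ does. Since $B$ is non-decreasing with $B(u_i)=0$, we have $B(x)\leq 0$ for $x\leq u_i$ and $B(x)\geq 0$ for $x\geq u_i$. On the open interval $(u_i,u_{i+1})$ (with $u_{i+1}=+\infty$ when $i=k$) there are no roots of $b$, so $B$ does not vanish there; combined with $B\geq 0$ this forces $B(x_0) > 0$ for every $x_0 \in (u_i,u_{i+1})$.

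The contradiction then follows from monotonicity. Fix such an $x_0 > u_i$ with $B(x_0) > 0$. Then $B(x)\geq B(x_0) > 0$ for all $x \geq x_0$, so $B$ has no zeros to the right of $u_i$. Hence any root of odd multiplicity must be the largest root $u_k$. Applying this to each odd root shows that at most one of $r_1,\dots,r_k$ can be odd, which is the assertion. (I note in passing that the same viewpoint reproves Lemma~\ref{lemma_two}: at an even root $b$ keeps its sign on both sides, which is incompatible with $B(x)\leq 0\leq B(x')$ for $x<u_i<x'$, so this unifies both lemmas.)

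The argument is short, and the only point needing care is the \emph{strictness} at the end: one must ensure $B$ is strictly positive, not merely non-negative, just to the right of an odd root. This is exactly where the decomposition of $b$ into distinct roots is used, since it guarantees $B$ cannot vanish on the open interval between consecutive $u_i$'s. I expect this sign bookkeeping—verifying that $(x-u_i)^{r_i}$ genuinely changes sign for odd $r_i$ while the remaining factors stay sign-constant locally—to be the main (modest) obstacle.
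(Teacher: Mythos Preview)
Your argument is correct and takes a genuinely different route from the paper's. The paper works directly with the factored form of $d(x)$: assuming two roots $u_p<u_q$ of odd multiplicity exist (chosen as the two largest such), it applies Lemma~\ref{key lemma} at each to extract sign inequalities of the form $a\,c(u_p)\,r_p\prod_{l\ne p}(u_p-u_l)^{r_l}\ge 0$ and $a\,c(u_q)\,r_q\prod_{l\ne q}(u_q-u_l)^{r_l}\ge 0$, multiplies them, and tracks signs through the resulting product until the parity of $r_q$ forces $(u_p-u_q)^{r_q}>0$, a contradiction. Your integrating-factor substitution $B=e^{\lambda/2}b$ replaces this computation with the single global fact that $B$ is non-decreasing; once $B$ is strictly positive somewhere to the right of a root, monotonicity forbids any further zeros. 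As you observe parenthetically, your argument never actually uses the oddness hypothesis---the sentence about sign change at an odd root is correct but not needed---and in fact proves $k\le 1$ outright, subsuming both Lemmas~\ref{lemma_two} and~\ref{lemma_one} at once. The paper's pointwise approach via Lemma~\ref{key lemma} is the workhorse it reuses throughout the later multivariate sections, whereas your method is cleaner here but tied to the one-dimensional first-order ODE structure.
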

\begin{proof}
    Suppose on the contrary that there exist $1\leq p<q\leq k$ such that $r_p$ and $r_q$ are odd.  Without loss of generality,  we assume that 
    \[
    q=\max\{1 \le j \le k: r_j\text{~is odd}\},\quad p=\max\{1 \le j \le q-1: r_j\text{~is odd}\}.
    \]
By \eqref{eq2_dim1} and \eqref{d_frac},  we have 
\[
d(x)=a\prod_{i=1}^k(x-u_i)^{r_i-1}\left( \sum_{i=1}^kr_ic(x)\prod_{j\neq i}(x-u_j)+\prod_{i=1}^k(x-u_i)(c'(x)+\dfrac{1}{2}c(x)\lambda(x)) \right)\geq 0.
\]
Since both $r_p$ and $r_q$ are odd,  Lemma \ref{key lemma} implies  
\[
ac(u_p)r_p\prod_{l\neq p}(u_p-u_l)^{r_l}\geq 0,\quad ac(u_q)r_q\prod_{l\neq q}(u_q-u_l)^{r_l}\geq 0.
\]
By multiplying the two inequalities,  we obtain 
\[
a^2c(u_p)c(u_q)r_pr_q \prod_{i > p}(u_p-u_i)^{r_i} \prod_{j> q}(u_q-u_j)^{r_j} \geq 0,
\]
since $u_1< \cdots<u_k$.  We notice that $c(x)$ has no real root,  hence $c(u_p)$ and $c(u_q)$ have the same sign.  This implies $c(u_p)c(u_q)>0$.  Moreover,  by $a\ne 0$ and $r_i\geq 1 $ for all $1 \le i \le k$,  we may derive   
\begin{equation}\label{lemma_one:eq1}
\prod_{i>p}(u_p-u_i)^{r_i} \prod_{j > q}(u_q-u_j)^{r_j} > 0.
\end{equation}
By the choice of $p$ and $q$,  $r_i$ is even whenever $i > p$ and $i \neq q$.  Therefore,  \eqref{lemma_one:eq1} can be further simplified to     $(u_p-u_q)^{r_q}>0$,  which contradicts to the assumption that $u_p<u_q$ and $r_q$ is odd. 
\end{proof}

\begin{theorem}[g-convex univariate polynomials]\label{snc_for_dim1}
Let $f \in \mathbb{R}[x]$ be a non-constant univariate polynomial.  The following are equivalent: 
\begin{enumerate}[(a)]
\item $f$ is  g-convex  with respect to some connection.\label{snc_for_dim1:item11}
\item $f$ is g-convex  with respect to some Riemannian metric.\label{snc_for_dim1:item22}
\item One of the following holds:
    \begin{enumerate}[(i)]
        \item $f$ has no critical point. \label{snc_for_dim1:item1}
        \item $f'(x)=(x-u)^{2r-1}p(x)$ for some real number $u$,  integer $r\ge 1$ and $p \in \mathbb{R}[x]$ such that $p(x) > 0$.\label{snc_for_dim1:item2}
    \end{enumerate}
\end{enumerate}
\end{theorem}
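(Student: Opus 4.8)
The plan is to prove the cycle $(a)\Leftrightarrow(b)$, $(b)\Rightarrow(c)$ and $(c)\Rightarrow(b)$. The equivalence $(a)\Leftrightarrow(b)$ is immediate from Lemma~\ref{1dim-connection}, so the substantive content is matching the two structural alternatives in $(c)$ against the solvability of \eqref{eq2_dim1}, namely the existence of $\lambda\in\mathcal{C}^{\infty}(\mathbb{R})$ with $d\coloneqq b'+\tfrac12 b\lambda'\ge 0$ for $b\coloneqq f'$. Throughout I would keep the factorization $b(x)=a\prod_{i=1}^k(x-u_i)^{r_i}c(x)$ fixed, recalling that $c$ is monic with no real root and hence $c(x)>0$ everywhere, and that the critical points of $f$ are exactly the real roots of $b$.

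For $(b)\Rightarrow(c)$: if $k=0$ then $b$ has no real root, i.e.\ $f$ has no critical point, which is alternative~\eqref{snc_for_dim1:item1}. If $k\ge 1$, I would invoke Lemmas~\ref{lemma_two} and~\ref{lemma_one}: the former forces every $r_i$ to be odd and the latter forces at most one $r_i$ to be odd, so $k=1$ and $r_1=2r-1$. Writing $b(x)=a(x-u)^{2r-1}c(x)$ with $u\coloneqq u_1$, the only remaining task is to pin down the sign of $a$. Expanding \eqref{eq2_dim1} gives
\[
d(x)=a(x-u)^{2r-2}\Big[(2r-1)c(x)+(x-u)\big(c'(x)+\tfrac12 c(x)\lambda'(x)\big)\Big].
\]
The bracketed factor equals $(2r-1)c(u)>0$ at $x=u$, hence remains positive on a punctured neighbourhood of $u$; since $(x-u)^{2r-2}>0$ there and $d\ge 0$, we must have $a\ge 0$, and $a\ne 0$ forces $a>0$. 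Setting $p\coloneqq ac$ then yields $p(x)>0$ and $f'(x)=(x-u)^{2r-1}p(x)$, which is alternative~\eqref{snc_for_dim1:item2}.

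For $(c)\Rightarrow(b)$: in case~\eqref{snc_for_dim1:item1}, $f$ has no critical point, so Proposition~\ref{thm:sufficient0} (with $A=0$) already produces a connection making $f$ g-convex; alternatively, $\lambda'\coloneqq -2b'/b$ is smooth because $b$ never vanishes, and it yields $d=0\ge 0$. In case~\eqref{snc_for_dim1:item2}, the key observation is that $\lambda\coloneqq -2\log p$ is globally smooth since $p>0$, and substituting $\lambda'=-2p'/p$ into \eqref{eq2_dim1} collapses the expression to
\[
d(x)=(2r-1)(x-u)^{2r-2}p(x)\ge 0,
\]
so $f$ is g-convex with respect to the Riemannian metric $e^{\lambda}\,dx^2=p^{-2}\,dx^2$. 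Combining with Lemma~\ref{1dim-connection} closes the cycle.

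The step I expect to be the main obstacle is establishing $a>0$ in $(b)\Rightarrow(c)$: Lemmas~\ref{lemma_two} and~\ref{lemma_one} only constrain the multiplicity pattern of the roots of $f'$ and carry no information about orientation, so without the local sign analysis of $d$ near the critical point one could not exclude the ``concave'' configuration (such as $f=-x^2/2$), which fails to be g-convex by Proposition~\ref{prop:necessary condition}. By contrast, the sufficiency direction becomes essentially a one-line verification once the explicit metric $p^{-2}\,dx^2$ has been guessed.
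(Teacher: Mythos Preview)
Your argument is correct. The equivalence $(a)\Leftrightarrow(b)$ and the implication $(b)\Rightarrow(c)$ match the paper's proof essentially verbatim: both reduce to Lemmas~\ref{lemma_two} and~\ref{lemma_one} for the root structure, followed by a local sign analysis at $x=u$ (the paper phrases this via Lemma~\ref{key lemma}, you via continuity of the bracketed factor; these are the same computation).

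The sufficiency direction $(c)\Rightarrow(b)$ is where you diverge from the paper. In case~\eqref{snc_for_dim1:item2} the paper does not construct a metric directly; instead it checks that $f''(x)=(x-u)^{2r-2}\big((2r-1)p(x)+(x-u)p'(x)\big)\ge 0$ in a neighbourhood of the unique critical point $u$, and then invokes the general Proposition~\ref{sufficient1} (partition of unity glued with Proposition~\ref{thm:sufficient0} away from $C_f$) to obtain a connection, finally passing through Lemma~\ref{1dim-connection} to get a metric. Your approach is more direct and more explicit: the choice $\lambda=-2\log p$ globally solves \eqref{eq2_dim1} with $d(x)=(2r-1)(x-u)^{2r-2}p(x)\ge 0$, so the Riemannian metric $p^{-2}\,dx^2$ is written down in closed form without any gluing. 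This buys you a concrete formula and avoids the machinery of Proposition~\ref{sufficient1}; the paper's route, on the other hand, serves to illustrate that the abstract criteria of Section~\ref{sec:criteria} are already sharp enough to recover the one-dimensional classification.
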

\begin{proof}
Denote $b(x) \coloneqq f'(x)$.  The equivalence between \eqref{snc_for_dim1:item11} and \eqref{snc_for_dim1:item22} follows from Lemma~\ref{1dim-connection}.  If $f$ is  g-convex with respect to some smooth connection and $b(x)$ has a root $u\in \mathbb{R}$,  then Lemmas~\ref{lemma_two} and \ref{lemma_one} imply that $b(x)=(x-u)^{2r-1} p(x)$ where $r\ge 1$ and $p(x) \in \mathbb{R}[x]$ has no real root.  Moreover,  by \eqref{eq2_dim1} we have
\[
(x-u)^{2r-2}\left( (2r-1)p(x)+(x-u)p'(x)+\dfrac{1}{2}(x-u)p(x)\lambda'(x))\right) \geq 0.
\]  
Since $2r- 2\ge 0$ is even,  Lemma \ref{key lemma} indicates that $p(u) \geq 0$.  Therefore,  $p$ is positive everywhere.

Conversely,  if  \eqref{snc_for_dim1:item1} holds,  then by Proposition~\ref{thm:sufficient0},  there exists an  connection on $\mathbb{R}$ such that $f$ is  g-convex.  Next,  we assume that \eqref{snc_for_dim1:item2} holds.  In this case,  we have $f''(x) = b'(x) =(x-u)^{2r-2}((2r-1) p(x)+(x-u)p'(x))$.  We claim that $f''(x)$ is non-negative in a neighborhood of $u$ so that Proposition~\ref{sufficient1} applies.  Indeed,  we notice that 
\[
\lim_{x\to u}\dfrac{b'(x)}{(x-u)^{2r-2}} = (2r-1) p(u)>0,
\]
from which we conclude that $b'(x)/(x-u)^{2r-2} > 0$ in a neighborhood of $u$.  This further implies that $b'(x) \ge 0$ in a neighborhood of $u$. 
\end{proof}

\begin{remark}
Theorem~\ref{snc_for_dim1} completely characterizes univariate polynomials which are  g-convex with respect to some smooth  connection.  We make several observations below:
\begin{itemize}
    \item[$\diamond$] If \eqref{snc_for_dim1:item1} (resp.  \eqref{snc_for_dim1:item2}) holds,  then $\deg(f)$ is odd (resp.  even). 
     \item[$\diamond$] According to the proof of Theorem~\ref{snc_for_dim1},  if \eqref{snc_for_dim1:item2} holds,  then $f$ is convex in a neighborhood of $u$.
    \item[$\diamond$] If $f$ is convex,  then \eqref{snc_for_dim1:item1} may be refined.  Indeed,  since $b' = f{''} \ge 0$,  \eqref{snc_for_dim1:item1} implies that $f$ is a linear function.
\end{itemize}
\end{remark}
\begin{example}\label{counter}
We consider the cubic polynomial $f(x)=x^3$.  By Theorem~\ref{snc_for_dim1},  it is clear that $f$ is not  g-convex with respect to any smooth  connection.  We also observe that $f$ satisfies the necessary condition given in Proposition~\ref{prop:necessary condition},  indicating again (cf.  Example~\ref{ex:necessary}) that the converse of Proposition~\ref{prop:necessary condition} is false.
\end{example}

Next,  for an integer $d \ge 0$,  we denote by $\mathbb{R}[x]_{\le d}$ the space of univariate polynomials of degree at most $d$ and set 
\begin{align*}
A_{1,d} &\coloneqq \lbrace
f \in \mathbb{R}[x]_{\le d}: f \text{~is  g-convex with respect to some  connection}
\rbrace,  \\
D_{d} &\coloneqq \{b\in \mathbb{R}[x]_{\leq d}: b = f' \text{~for some~}f\in A_{1,d+1} \}.
\end{align*}
Let $\partial:\mathbb{R}[x]_{\leq d}\rightarrow \mathbb{R}[x]_{\leq d-1}$ be the $\mathbb{R}$-linear map defined by $\partial f(x) \coloneqq f'(x)$.  Obviously,  we have $ \partial(A_{d+1})= D_{d}$ and $\partial^{-1}(b)\cong \mathbb{R}$ for all $b\in D_d$.  This together with the definition of topological dimension leads to the lemma  that follows.
\begin{lemma}\label{relation_of_DCP_and_GCP}
We have $\dim A_{1,d+1} =\dim D_d +1$.
\end{lemma}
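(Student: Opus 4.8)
The plan is to exhibit $A_{1,d+1}$ as a trivial $\mathbb{R}$-bundle over $D_d$ and then read off the dimension. The starting point is the observation, already implicit in \eqref{eq2_dim1}, that whether $f$ is g-convex depends only on its derivative $b = f'$: by Theorem~\ref{thm: g-convex} and Lemma~\ref{1dim-connection}, a polynomial $f \in \mathbb{R}[x]_{\le d+1}$ is g-convex if and only if there is some $\lambda \in \mathcal{C}^{\infty}(\mathbb{R})$ with $b' + \tfrac{1}{2} b \lambda' \ge 0$, a condition left untouched when $f$ is replaced by $f + c$ for a constant $c$. Consequently g-convexity is preserved under adding constants, so $A_{1,d+1} = \partial^{-1}(D_d)$, and each fiber $\partial^{-1}(b)\cap A_{1,d+1}$ over $b \in D_d$ is exactly the affine line $\{f_0 + c : c \in \mathbb{R}\}$, where $f_0$ is any antiderivative of $b$. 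This is the rigorous content behind the assertions $\partial(A_{1,d+1}) = D_d$ and $\partial^{-1}(b) \cong \mathbb{R}$ preceding the statement.

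To convert this fiberwise description into a dimension count I would first make the trivialization explicit and continuous. Let $\sigma : \mathbb{R}[x]_{\le d} \to \mathbb{R}[x]_{\le d+1}$ be the linear antiderivative sending $b = \sum_{i=0}^d b_i x^i$ to $\sum_{i=0}^d \frac{b_i}{i+1} x^{i+1}$, i.e.\ the unique antiderivative with vanishing constant term. I would then set
\[
\Phi : D_d \times \mathbb{R} \to A_{1,d+1}, \qquad \Phi(b, c) = \sigma(b) + c,
\]
which is well defined since $(\sigma(b) + c)' = b \in D_d$ forces $\sigma(b) + c \in \partial^{-1}(D_d) = A_{1,d+1}$. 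Its inverse is $f \mapsto (\partial f, f(0))$, because $\sigma(\partial f) = f - f(0)$. Both $\Phi$ and this inverse are restrictions of $\mathbb{R}$-linear maps, hence continuous, so $\Phi$ is a homeomorphism and $A_{1,d+1} \cong D_d \times \mathbb{R}$.

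It then remains to apply the product formula $\dim(X \times \mathbb{R}) = \dim X + 1$ with $X = D_d$, and this is the one step requiring genuine care, since the covering dimension of a product of arbitrary spaces need not be additive. I would resolve it by noting that the characterization in Theorem~\ref{snc_for_dim1} presents $A_{1,d+1}$, and therefore $D_d = \partial(A_{1,d+1})$, as a semialgebraic subset of $\mathbb{R}[x]_{\le d} \cong \mathbb{R}^{d+1}$ (the two alternatives of Theorem~\ref{snc_for_dim1}, together with the finitely many possible multiplicities $r$, are each semialgebraic conditions on the coefficients). For semialgebraic sets the topological dimension coincides with the semialgebraic dimension, and the identity $\dim(D_d \times \mathbb{R}) = \dim D_d + 1$ holds without pathology; the same fact is classically available for any nonempty separable metrizable $X$. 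Combining this with the homeomorphism gives $\dim A_{1,d+1} = \dim(D_d \times \mathbb{R}) = \dim D_d + 1$, as claimed. I expect the additivity of dimension under the product with $\mathbb{R}$ to be the only real obstacle, and I would dispatch it through semialgebraicity rather than through the informal fiber-counting heuristic.
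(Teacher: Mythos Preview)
Your argument is correct and follows the same strategy as the paper—exhibit $A_{1,d+1}$ as $D_d \times \mathbb{R}$ via the derivative map and read off dimensions—though the paper's version is a one-line sketch that simply asserts $\partial(A_{1,d+1}) = D_d$ and $\partial^{-1}(b) \cong \mathbb{R}$ and then invokes ``the definition of topological dimension.'' Your explicit trivialization and your care with the additivity of dimension (via the semialgebraicity afforded by Theorem~\ref{snc_for_dim1}) supply the rigor that the paper leaves to the reader.
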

By \eqref{eq2_dim1},  it is clear that both $A_{1,d}$ and $D_{d}$ are cones in the vector space $\mathbb{R}[x]_{\le d}$.  However,  neither of them is g-convex.  For instance,  although $f_1(x)=x^2+4x+5,  f_2(x)=2x-5 \in D_{2}$,  it is easy to verify that $f_1(x)/2 + f_2(x)/2 =x(x+3) \not\in D_{2}$.

Let $M_d$ be the space consisting of monic complex polynomials of degree $d$.  We consider the map $\Phi: \mathbb{C}^{d} \to M_d$ defined by $\Phi(a_1,\dots,  a_d) \coloneqq \prod_{j=1}^d (x - a_j)$.
\begin{lemma}\label{nondegenerate}
The Jacobian matrix $J(\Phi)$ of $\Phi$ is invertible at $x = (x_1,\dots,  x_d)\in \mathbb{C}^d$ if and only if $x_1,\dots,  x_d$ are distinct.  In particular,  $\Phi$ is a local diffeomorphism at such point. 
\end{lemma}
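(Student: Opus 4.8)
The plan is to identify the target $M_d$ with $\mathbb{C}^d$ via the non-leading coefficients of a monic polynomial, so that $\Phi$ becomes (a signed version of) the map sending roots to elementary symmetric functions, and then to describe the columns of $J(\Phi)$ explicitly. Writing $P(t) = \prod_{j=1}^d (t - a_j) = t^d + \sum_{i=1}^d (-1)^i e_i(a)\, t^{d-i}$, where $e_i$ is the $i$-th elementary symmetric polynomial and $t$ is the indeterminate, I identify $M_d$ with $\mathbb{C}^d$ through the coordinates $\big((-1)^1 e_1(a),\dots,(-1)^d e_d(a)\big)$ (the leading coefficient being fixed at $1$). Differentiating $P$ with respect to the root $a_k$ gives $\partial P/\partial a_k = -\prod_{j\neq k}(t - a_j) =: -Q_k(t)$, a polynomial of degree $d-1$. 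Hence, up to an overall sign, the $k$-th column of $J(\Phi)$ evaluated at $a = (x_1,\dots,x_d)$ is exactly the coefficient vector of $Q_k$ in the basis $\{1,t,\dots,t^{d-1}\}$ of $\mathbb{C}[t]_{\le d-1}$.

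With this description in hand, invertibility of $J(\Phi)$ is equivalent to the linear independence of $Q_1,\dots,Q_d$ in the $d$-dimensional space $\mathbb{C}[t]_{\le d-1}$. This reformulation is the crux, and it is where the two directions of the stated equivalence separate cleanly.

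For the forward direction, suppose $x_1,\dots,x_d$ are distinct. I would test linear independence by evaluation at the roots: for any $i,k$ one has $Q_k(x_i) = \prod_{j\neq k}(x_i - x_j)$, which vanishes when $i \neq k$ (the factor $x_i - x_i$ then occurs) and equals $\prod_{j\neq k}(x_k - x_j) \neq 0$ when $i = k$ by distinctness. Thus the evaluation matrix $\big(Q_k(x_i)\big)_{i,k}$ is diagonal with nonzero diagonal entries, so any relation $\sum_k \lambda_k Q_k = 0$ forces $\lambda_i = 0$ upon evaluation at $x_i$. Hence the $Q_k$ are independent and $J(\Phi)$ is invertible.

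For the converse, suppose $x_p = x_q$ for some $p \neq q$. Then both $Q_p$ and $Q_q$ factor as $(t - x_p)\prod_{j\neq p,q}(t - x_j)$, so $Q_p = Q_q$; the corresponding columns of $J(\Phi)$ coincide and $\det J(\Phi) = 0$. Finally, when the $x_j$ are distinct the invertibility of $J(\Phi)$ combined with the holomorphic inverse function theorem yields that $\Phi$ is a local biholomorphism at that point. The main obstacle is really only the bookkeeping in passing from $\Phi$ to its Jacobian; once the columns are matched with the coefficient vectors of the $Q_k$, the evaluation trick and the observation $Q_p = Q_q$ make both implications immediate.
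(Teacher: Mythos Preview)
Your proof is correct. Both you and the paper begin by identifying $M_d$ with $\mathbb{C}^d$ via the (signed) non-leading coefficients, so that $\Phi$ becomes the elementary symmetric function map; the difference lies in how the Jacobian is handled. The paper simply invokes an external reference to obtain the explicit formula $\det J(\varphi)(x) = \prod_{i<j}(x_i - x_j)$ and reads off the equivalence from that Vandermonde-type expression. You instead compute the columns of $J(\Phi)$ as the coefficient vectors of the polynomials $Q_k(t) = \prod_{j\neq k}(t - x_j)$ and argue independence directly: evaluation at the $x_i$ yields a diagonal matrix with nonzero diagonal when the roots are distinct, while $Q_p = Q_q$ when $x_p = x_q$. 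Your route is self-contained and avoids the citation, at the cost of not producing the explicit determinant; the paper's route is shorter on the page but relies on an outside result. Both arguments conclude via the (holomorphic) inverse function theorem in the same way.
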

\begin{proof}
We consider the isomorphism $\tau: M_d  \to \mathbb{C}^d$ defined by 
\[
\tau \left( x^d + \sum_{j=0}^{d-1} c_j x^j \right) \coloneqq (-c_{d-1}, c_{d-2},\dots,  (-1)^{d-1}c_1,  (-1)^d c_0).
\]
We observe that $\varphi \coloneqq \tau \circ \Phi: \mathbb{C}^d \to M_d  \to \mathbb{C}^d$ is simply $\varphi (x) = (\sigma_1(x),\dots,  \sigma_d(x))$,
where $\sigma_k$ is the $k$-th elementary symmetric function for $1 \le k \le d$.  According to \cite[Theorem2.1]{WuXingYuan},  $\det(J(\varphi)(x)) =\prod_{i<j}(x_i-x_j)$,  from which the lemma follows immediately as $\tau$ is an isomorphism.  The fact that $\Phi$ is a local diffeomorphism is a direct consequence of the inverse function theorem.  
\end{proof}

\begin{proposition}[Dimension I]\label{dim_of_DCP}
For any integer $d\ge 0$,  we have $\dim A_{1,d} = d+1$.
\end{proposition}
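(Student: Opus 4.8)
The plan is to pin $\dim A_{1,d}$ between $d+1$ and $d+1$. The upper bound is immediate: $A_{1,d}\subseteq\mathbb{R}[x]_{\le d}$, a vector space of dimension $d+1$, so $\dim A_{1,d}\le d+1$. For the lower bound I would invoke Lemma~\ref{relation_of_DCP_and_GCP} (with its ``$d$'' set to $d-1$), which gives $\dim A_{1,d}=\dim D_{d-1}+1$. Since $D_{d-1}\subseteq\mathbb{R}[x]_{\le d-1}\cong\mathbb{R}^{d}$, it suffices to show that $D_{d-1}$ contains a nonempty open subset: a subset of $\mathbb{R}^d$ containing an open ball has topological dimension $d$, forcing $\dim D_{d-1}=d$ and hence $\dim A_{1,d}=d+1$. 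The edge case $d=0$ is handled separately, since every constant is g-convex and thus $A_{1,0}=\mathbb{R}[x]_{\le 0}$ has dimension $1$.

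Next I would translate membership in $D_{d-1}$ into a root condition via Theorem~\ref{snc_for_dim1}: a degree-$(d-1)$ polynomial $b$ lies in $D_{d-1}$ if and only if either $b$ has no real root, or $b=(x-u)^{2r-1}p$ with $p$ positive everywhere. I would then exhibit an explicit $b_0\in D_{d-1}$ of full degree $d-1$, with distinct roots, positive leading coefficient, and at most one real root (which is then simple): take $b_0=\prod_{k=1}^{(d-1)/2}(x^2+k)$ when $d-1$ is even, and $b_0=x\prod_{k=1}^{(d-2)/2}(x^2+k)$ when $d-1$ is odd. In both cases the roots are distinct and at most one of them is real.

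I would then show that an entire neighborhood of $b_0$ in $\mathbb{R}[x]_{\le d-1}$ lies in $D_{d-1}$. Write a nearby $b$ as $b=a\tilde b$ with leading coefficient $a$ close to $1>0$ (so $\deg b=d-1$) and $\tilde b$ monic close to the monic $b_0$. Because $b_0$ has distinct roots, the degree-$(d-1)$ analogue of $\Phi$ is a local diffeomorphism there by Lemma~\ref{nondegenerate}, so the roots of $\tilde b$ stay close to those of $b_0$ and depend continuously on $b$; moreover, since $\tilde b$ is real, its root multiset is closed under complex conjugation. Hence a simple real root of $b_0$ perturbs to a simple real root of $b$ (its conjugate, being the only nearby root, must coincide with it), while each non-real root of $b_0$ stays non-real. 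Thus $b$ has exactly as many real roots as $b_0$. If there are none, $b$ satisfies case~(i); if there is one, then $b=(x-u)\,p$ with $p:=b/(x-u)$ of even degree $d-2$, no real root, and positive leading coefficient, whence $p>0$ everywhere and $b$ satisfies case~(ii) with $r=1$. Either way $b\in D_{d-1}$, so $b_0$ is an interior point.

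The main obstacle is precisely the openness argument of the previous paragraph: one must rule out that perturbing $b_0$ creates a new real root or destroys the positivity of $p$. This is where Lemma~\ref{nondegenerate} is essential, as it guarantees that near a polynomial with simple roots the roots vary smoothly, so that the conjugation symmetry of real polynomials pins down exactly which roots remain real. The remaining ingredients---continuity of the leading coefficient and the elementary fact that a real polynomial of even degree with positive leading coefficient and no real root is positive everywhere---are routine.
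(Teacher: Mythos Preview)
Your argument is correct and follows essentially the same route as the paper: reduce via Lemma~\ref{relation_of_DCP_and_GCP} to showing $D_{d-1}$ has full dimension, exhibit an explicit degree-$(d-1)$ polynomial with simple roots and at most one real root (the paper uses $x^{d-1}+1$ and $x(x^{d-2}+1)$ in place of your $\prod(x^2+k)$), and invoke Lemma~\ref{nondegenerate} to propagate the root pattern to a full neighborhood. The only cosmetic difference is that for odd $d-1$ the paper parametrizes the neighborhood as $\{(x-u)q(x):u\in\mathbb{R},\,q\in W'\}$ rather than arguing directly via conjugation symmetry as you do.
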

\begin{proof}
Lemma~\ref{relation_of_DCP_and_GCP} implies that $\dim D_{d-1} = \dim A_{1,d}-1$.  By definition,  it is sufficient to find some $h \in D_{d-1}$ and prove that $h$ has a neighborhood $W$ in $\mathbb{R}[x]_{\le d-1}$ such that $W \subseteq D_{d-1}$ and $\dim W = d$.  We split the discussion with respect to the parity of $d$. 

If $d-1$ is even,  then Theorem~\ref{snc_for_dim1} implies that $h(x) \coloneqq x^{d-1}+1$ is contained in $D_{d-1}$.  Moreover,  $h$ has $d$ distinct roots $\zeta_1,\dots,  \zeta_{d-1} \in \mathbb{C} \setminus \mathbb{R}$.  By Lemma \ref{nondegenerate},  there exists a neighborhood $U_1$ of $(\zeta_1,\dots,  \zeta_{d-1})$ in $\mathbb{C}^{d-1}$ such that $\Phi|_{U_1}$ is a diffeomorphism.  Thus,  $V_1 \coloneqq \Phi(U_1)$ is a neighborhood of $h =\Phi(\zeta_1,\dots,  \zeta_{d-1})$ in $M_{d-1}$.  Furthermore,  $W_1 \coloneqq V_1 \cap M_{d-1}(\mathbb{R})$ is a non-empty neighborhood of $h$ in $M_{d-1}(\mathbb{R})$,  where $M_{d-1}(\mathbb{R})$ is the space consisting of monic real polynomials of degree $d-1$.  We may shrink $U_1$,  if necessary,  so that every $f \in W_1$ has no real root.  Lastly,  we define 
\[
W \coloneqq \left\lbrace f= \sum_{j=0}^d c_j x^j:  c_d > 0,  c_d^{-1}f \in W_1 \right\rbrace.
\]
It is straightforward to verify that $W \simeq  W_1 \times (0,\infty)$ has dimension $d$ and $W \subseteq \mathbb{R}[x]_{\le d-1}$ is a neighborhood of $h$. 

If $d-1$ is odd,  then we let $W'$ be the neighborhood of $x^{d-2} + 1$ in $\mathbb{R}[x]_{\le d-2}$ as above,  and we define
\[
W \coloneqq  \{ (x-u)q(x):  u\in \mathbb{R},\; q\in W' \}. 
\]
Clearly,  $W$ is a neighborhood of $h(x) \coloneqq x(x^{d-2} + 1)$ in $\mathbb{R}[x]_{\le d-1}$.  We notice that any $f\in W'$ is non-negative.  By Theorem \ref{snc_for_dim1},  we have $W \subseteq D_{d}$.  Moreover,  $W \simeq W' \times \mathbb{R}$ has dimension $d$ and this completes the proof.
\end{proof}

For each positive real number $r$,  we observe that
\begin{equation}\label{eq:Pd}
P_d(r) \coloneqq \left\lbrace
f = \sum_{j=0}^d a_j x^j \in \mathbb{R}[x]_{\leq d}:  |a_j| \le r,\;  0 \le j \le d
\right\rbrace \simeq [-r,r]^{d+1}.
\end{equation}
Thus we may equip $P_d(r)$ with the uniform distribution.  
As an application of Proposition~\ref{littlewood} and Theorem~\ref{snc_for_dim1},  we may derive an estimate of the density of  g-convex polynomials in $P_d(r)$.  We recall a classical result. 
\begin{proposition}\cite[Theorem 2]{Littlewood1938}\label{littlewood}
The probability that $f \in P_d(1)$ has $O(\log d/(\log\log d)^2)$ real roots is $O(1/\log d)$.
\end{proposition}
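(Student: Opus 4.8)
Since this is Littlewood and Offord's classical estimate, I would reconstruct a proof along the lines of their sign-counting method. Write $f = \sum_{j=0}^d a_j x^j$ with the $a_j$ independent and uniform on $[-1,1]$. The starting observation is that if $t_1 < t_2 < \cdots < t_N$ are real sample points and the sequence $\big(\sign f(t_i)\big)_{i=1}^N$ has $K$ sign changes, then $f$ has at least $K$ real roots. Thus it suffices to produce, outside an event of probability $O(1/\log d)$, a lower bound of order $\log d/(\log\log d)^2$ on the number of sign changes of $f$ along a well-chosen sequence of points. All of the probabilistic content is therefore pushed into showing that the signs $\sign f(t_i)$ oscillate often.

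The real roots of a random polynomial concentrate near $x = \pm 1$, so I would place the points there, say $t_i = 1 - s_i$ for a decreasing sequence $s_i \downarrow 0$ (and symmetrically near $-1$). For such $t_i$ the value $f(1-s_i)$ is dominated by the coefficients $a_j$ whose index $j$ is comparable to $1/s_i$. Choosing the $s_i$ so that these dominant index-\emph{bands} $I_1, I_2, \dots$ are pairwise disjoint, I would, for each $i$, condition on all coefficients lying outside $I_i$ and show that the residual randomness from $\{a_j : j \in I_i\}$ still decides the sign of $f(t_i)$. This requires two quantitative inputs: an anti-concentration estimate of Littlewood--Offord/Erd\H{o}s type, ensuring that the contribution of band $I_i$ stays bounded away from $0$ with probability at least a constant; and a tail estimate ensuring that the pooled contribution of all non-dominant coefficients rarely overwhelms that of $I_i$. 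Together they yield, conditionally on the other bands, that $\Pr[\sign f(t_i) = +1]$ and $\Pr[\sign f(t_i) = -1]$ are each bounded below by a fixed constant, so that consecutive points disagree in sign with probability bounded below and the events behave almost independently across bands.

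It then remains to count the bands and to concentrate. A naive geometric spacing of the $s_i$ fits $\Theta(\log d)$ disjoint bands into the index range $\{0, \dots, d\}$; the constraint that each band must dominate the accumulated tail of the remaining terms forces the bands to widen as their index grows, and optimizing this packing is what degrades the count to $\Theta\big(\log d/(\log\log d)^2\big)$ --- the origin of the iterated-logarithm factor in the statement. Finally, treating the number of sign changes as a sum of nearly-independent indicators, I would bound its lower tail, either by a second-moment (Chebyshev) argument or by an Azuma-type inequality applied to the martingale that exposes the bands one at a time, to conclude that the number of sign changes falls below a constant multiple of its mean only on an event of probability $O(1/\log d)$.

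The hardest part is the coupling in the middle step: making the anti-concentration and tail bounds uniform in $d$ and simultaneously consistent with the band packing, so that the sign of each $f(t_i)$ is genuinely controlled by its own band up to an error of small, summable probability. This is precisely where Littlewood and Offord's original argument is most delicate, and matching the exact factors $(\log\log d)^2$ and $1/\log d$ --- rather than merely obtaining some polylogarithmic lower bound holding with high probability --- is the part I expect to require the most care.
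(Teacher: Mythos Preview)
The paper does not prove this proposition at all: it is stated with the citation \cite[Theorem~2]{Littlewood1938} and used as a black box in the proof of Theorem~\ref{prob_dim_1}. So there is no ``paper's proof'' to compare your proposal against; the authors simply import the classical Littlewood--Offord estimate from the literature.

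Your sketch is a reasonable outline of the original Littlewood--Offord sign-counting argument and captures the main ingredients (sampling near $\pm 1$, disjoint coefficient bands, anti-concentration, near-independence of signs across bands). As you yourself note, the delicate part is making the band-packing and the tail/anti-concentration bounds mutually consistent so as to recover the precise $(\log\log d)^2$ loss and the $O(1/\log d)$ failure probability; your sketch does not actually carry this out, so it remains a plausible plan rather than a proof. For the purposes of this paper, however, none of that is needed: a citation to Littlewood--Offord suffices, and that is exactly what the authors do.
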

\begin{theorem}[Density I]\label{prob_dim_1}
The probability that $f \in P_d(r)$ is  g-convex with respect to some  connection is $O(1/\log d)$. 
\end{theorem}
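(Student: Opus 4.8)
The plan is to connect g-convexity to the number of real roots of the derivative, and then invoke the Littlewood estimate on real roots. By Theorem~\ref{snc_for_dim1}, a non-constant $f \in P_d(r)$ is g-convex with respect to some connection if and only if either (i) $f' = b$ has no real root, or (ii) $b(x) = (x-u)^{2r-1}p(x)$ with $p$ having no real root, i.e. $b$ has exactly one distinct real root and its multiplicity is odd. The key observation is that in both cases the derivative $b = f'$ has \emph{at most one distinct real root} (counted without multiplicity). Conversely, if $b$ has at least two distinct real roots, then $f$ cannot be g-convex. Thus the event that $f$ is g-convex is contained in the event that $b = f'$ has at most one distinct real root.

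First I would reduce the problem to counting real roots of $f'$. Since $f = \sum_{j=0}^d a_j x^j \in P_d(r)$ has $f'(x) = \sum_{j=1}^d j a_j x^{j-1}$, a polynomial of degree at most $d-1$, the scaling by the factors $j$ does not change which coefficients are nonzero and only rescales the uniform distribution on a box. The main point is to verify that the distribution of $f'$, as $f$ ranges uniformly over $P_d(r) \simeq [-r,r]^{d+1}$, is (up to the independent free choice of $a_0$, which does not affect $f'$) the uniform distribution of a random polynomial of degree $d-1$ whose coefficients are independent and uniform on rescaled symmetric intervals $[-jr, jr]$. I would then argue that Littlewood's Theorem~\ref{littlewood}, although stated for $P_d(1)$ with coefficients uniform on $[-1,1]$, applies equally to independent coefficients uniform on symmetric intervals, since the conclusion of \cite{Littlewood1938} is insensitive to rescaling each coefficient by a positive constant (rescaling $a_j \mapsto a_j / (jr)$ is a measure-preserving bijection that does not alter the real roots of the polynomial up to the common factor).

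Next I would assemble the bound. If $f$ is g-convex, then $f'$ has at most one distinct real root, so in particular $f'$ has $O(\log d / (\log\log d)^2)$ real roots (trivially, since one is far below this threshold for large $d$). By Proposition~\ref{littlewood} applied to the degree-$(d-1)$ polynomial $f'$, the probability that $f'$ has $O(\log d / (\log \log d)^2)$ real roots is $O(1/\log d)$. Since the g-convex event is contained in this event, its probability is also $O(1/\log d)$. The marginalization over $a_0$ is harmless because the g-convexity of $f$ depends only on $f'$, hence only on $(a_1, \dots, a_d)$, and integrating out $a_0$ preserves the probability.

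The main obstacle I anticipate is the careful transfer of the Littlewood estimate from the precise coefficient model in \cite{Littlewood1938} to the model induced on $f'$ here: one must confirm that the result holds for coefficients that are independent and uniform on symmetric intervals of differing widths $[-jr, jr]$, rather than the specific i.i.d.\ model Littlewood used, and that the degree shift from $d$ to $d-1$ only affects the bound by a constant factor absorbed into the $O(\cdot)$ notation. I would address this by noting that $f'(x) = 0$ has the same real solutions as $\sum_{j=1}^d (ja_j) x^{j-1} = 0$, and that the real-root count is invariant under the coordinatewise positive rescaling $ja_j \mapsto c_j \cdot (ja_j)$; since uniformity on $[-jr, jr]$ is the pushforward of uniformity on $[-1,1]$ under such a rescaling, the real-root distribution of $f'$ matches that of a $P_{d-1}(1)$-random polynomial exactly, and Proposition~\ref{littlewood} applies verbatim.
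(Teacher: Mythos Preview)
Your overall strategy — use Theorem~\ref{snc_for_dim1} to conclude that $f'$ has at most one distinct real root, then invoke Littlewood's estimate — is natural, but the final step contains a genuine error. You claim that ``the real-root count is invariant under the coordinatewise positive rescaling $ja_j \mapsto c_j \cdot (ja_j)$'' and conclude that the real-root distribution of $f'$ coincides with that of a $P_{d-1}(1)$-random polynomial. This is false: rescaling each coefficient by a \emph{different} positive constant changes the polynomial and in general changes its real roots (for instance, $x^2+x+1$ has no real root, while $x^2+2x+1$ has one). The map $(b_0,\dots,b_{d-1}) \mapsto (b_0/c_0,\dots,b_{d-1}/c_{d-1})$ is a measure-preserving bijection between the two boxes, but it does \emph{not} intertwine the root-counting functions. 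So Proposition~\ref{littlewood}, as stated for $P_d(1)$, does not transfer to the coefficient model of $f'$ by this argument. One would need a version of Littlewood--Offord valid for independent symmetric coefficients on intervals of varying widths, which is true but is not what Proposition~\ref{littlewood} says and is not what you argued.

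The paper sidesteps this difficulty entirely by applying Proposition~\ref{littlewood} to $f$ itself rather than to $f'$. The point is that the structure of $f'$ in Theorem~\ref{snc_for_dim1} controls the number of real roots of $f$: if $f'$ has no real root then $f$ is strictly monotone, and if $f'(x)=(x-u)^{2r-1}p(x)$ with $p>0$ then $f'$ changes sign only at $u$, so $f$ has a unique local (hence global) minimum and therefore at most two real roots. In either case $f$ has $O(1)$ real roots, which for large $d$ is well below the threshold $\log d/(\log\log d)^2$. Since the coefficients of $f$ are genuinely i.i.d.\ uniform on $[-r,r]$ (equivalently $[-1,1]$ after rescaling by the \emph{single} constant $r$, which does preserve roots), Proposition~\ref{littlewood} applies directly to $f$ and gives the $O(1/\log d)$ bound.
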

\begin{proof}
By \eqref{eq:Pd},  it is sufficient to consider $P_d(1)$.  We notice that $\lim_{d\to \infty} \log d/(\log\log d)^2 = \infty$.  According to Proposition \ref{littlewood},  the probability that $f \in P_d(1)$ has at most one real root is $O(1/\log d)$.  This together with Theorem~\ref{snc_for_dim1} completes the proof.
\end{proof}
\section{g-convex quadratic polynomials} 
Next we consider  g-convex quadratic polynomials.  Given a quadratic polynomial 
\begin{equation}\label{representation of a quadratic polynomials}
    f(x)=\dfrac{1}{2}x^\tp Ax+ b^\tp x +c \in \mathbb{R}[x_1,\dots,  x_n]_{\le 2},
\end{equation}
where $A \in \mathsf{S}^2(\mathbb{R}^n)$,  $c\in \mathbb{R}$ and $x,  b \in \mathbb{R}^n$ are column vectors.  Then 
\begin{equation}\label{eq:quad poly}
\grad_{\textsf{e}} f=Ax+b, \quad \Hess_{\textsf{e}}  f =A.
\end{equation}
\begin{lemma}\label{lem:quad-g-convex}
A quadratic polynomial $f \in \mathbb{R}[x_1,\dots,  x_n]_{\le 2}$ parametrized as in \eqref{representation of a quadratic polynomials} is  g-convex with respect to some connection if and only if $A \succeq 0$ or $\rank \left( \begin{bmatrix}
A & b
\end{bmatrix}\right) > \rank (A)$. 
\end{lemma}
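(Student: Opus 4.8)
The plan is to reinterpret the rank condition in terms of the critical points of $f$ and then apply the criteria of Section~\ref{sec:criteria}. By \eqref{eq:quad poly}, the Euclidean gradient is $\grad_{\mathsf{e}} f = Ax + b$ and the Euclidean Hessian is the constant matrix $\Hess_{\mathsf{e}} f \equiv A$. Hence the critical points of $f$ are exactly the solutions of the linear system $Ax + b = 0$, and such a solution exists if and only if $b \in \im(A)$, i.e. if and only if $\rank \begin{bmatrix} A & b \end{bmatrix} = \rank(A)$. In other words, the condition $\rank \begin{bmatrix} A & b \end{bmatrix} > \rank(A)$ is precisely the statement that $f$ has no critical point.

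For the sufficiency direction I would split according to the two disjuncts. If $A \succeq 0$, then $f$ is convex in the usual Euclidean sense, so it is g-convex with respect to the Levi-Civita connection $\nabla_{\mathsf{e}}$ of the Euclidean metric. If instead $\rank \begin{bmatrix} A & b \end{bmatrix} > \rank(A)$, then by the observation above $f$ has no critical point, and Proposition~\ref{thm:sufficient0} directly produces a connection with respect to which $f$ is g-convex. This settles the ``if'' part.

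For the necessity direction I would argue by contraposition. Suppose neither disjunct holds, so that $A \not\succeq 0$ and $\rank \begin{bmatrix} A & b \end{bmatrix} = \rank(A)$. The latter equality forces $b \in \im(A)$, hence $f$ has a critical point $x_0$. Since $\Hess_{\mathsf{e}} f \equiv A$ is constant and $A \not\succeq 0$, the Euclidean Hessian fails to be positive semidefinite at $x_0$. By Proposition~\ref{prop:necessary condition}, $f$ cannot be g-convex with respect to any connection. This establishes the ``only if'' part and completes the proof.

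The argument is short, and there is no serious obstacle once the key translation between the rank condition and the existence of critical points has been made; the essential point is that for a quadratic the Euclidean Hessian is \emph{constant}, so the pointwise necessary condition of Proposition~\ref{prop:necessary condition} collapses into the global positive-semidefiniteness condition $A \succeq 0$ as soon as a critical point is present. The only mild care needed is to keep track of the two regimes $b \in \im(A)$ and $b \notin \im(A)$ and to invoke the correct one of Propositions~\ref{thm:sufficient0} and \ref{prop:necessary condition} in each.
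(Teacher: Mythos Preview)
Your proof is correct and follows essentially the same approach as the paper: both arguments translate the rank condition into the (non)existence of a critical point, invoke Proposition~\ref{thm:sufficient0} when no critical point exists, use the Euclidean metric when $A\succeq 0$, and appeal to Proposition~\ref{prop:necessary condition} together with the constancy of $\Hess_{\mathsf{e}} f\equiv A$ for the necessity direction. The only cosmetic difference is that you phrase necessity by contraposition whereas the paper phrases it directly.
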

\begin{proof}
If $f$ is  g-convex with respect to some connection and $\rank \left( \begin{bmatrix}
A & b
\end{bmatrix}\right) = \rank (A)$,  we must have $A\succeq 0$ by Proposition~\ref{prop:necessary condition} and \eqref{eq:quad poly}.  Conversely,  if $A\succeq 0$,  then $f$ is g-convex with respect to the Euclidean metric.  If $\rank \left( \begin{bmatrix}
A & b
\end{bmatrix}\right) > \rank (A)$,  then \eqref{eq:quad poly} implies that $f$ has no critical point and $f$ is  g-convex with respect to some connection by Proposition~\ref{thm:sufficient0}.
\end{proof}
Next we characterize quadratic polynomials that are g-convex with respect to some pseudo-Riemannian metric.  According to Lemma~\ref{lem:quad-g-convex},  it is sufficient to consider $f\in \mathbb{R}[x_1,\dots,  x_n]_{\le 2}$ with $ \rank \left( \begin{bmatrix}
A & b
\end{bmatrix}\right) > \rank (A)$.
\begin{lemma}[Normal form]
If $f \in \mathbb{R}[x_1,\dots,  x_n]_{\le 2}$ with $ \rank\left( \begin{bmatrix}
A & b
\end{bmatrix}\right)>  \rank(A) \eqqcolon r $,  then there exist $Q \in \O_n(\mathbb{R})$ and $v\in \mathbb{R}^n$ such that 
\begin{equation}\label{normal_form}
    f ( Q^\tp (y-v) )=\sum_{i=1}^r \mu_i y_i^2+\sum_{j=r+1}^n \nu_j y_j+\kappa
\end{equation}
where $\kappa,  \mu_i,\nu_j \in\mathbb{R}$,  $1 \le i \le r < j \le n$ are constants such that $\mu_i\neq 0$ and $\nu_{j_0} \ne 0$ for all $1 \le i \le r$ and some $r + 1 \le j_0 \le n$.
\end{lemma}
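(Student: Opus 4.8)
The plan is to bring $f$ into normal form by first diagonalising the quadratic part $A$ with an orthogonal change of variables and then completing the square in the directions corresponding to the nonzero eigenvalues of $A$. Since $A \in \mathsf{S}^2(\mathbb{R}^n)$ is symmetric with $\rank(A) = r$, the spectral theorem furnishes $P \in \O_n(\mathbb{R})$ such that $P^\tp A P = \diag(2\mu_1,\dots,2\mu_r,0,\dots,0)$, where $2\mu_1,\dots,2\mu_r$ are the nonzero eigenvalues of $A$; in particular $\mu_i \neq 0$ for $1 \le i \le r$. Setting $Q \coloneqq P^\tp$ and writing $x = Pz$ with $\tilde b \coloneqq P^\tp b$, the representation \eqref{representation of a quadratic polynomials} becomes $f(Pz) = \sum_{i=1}^r \mu_i z_i^2 + \sum_{i=1}^n \tilde b_i z_i + c$, so the quadratic part is already diagonal and supported on the first $r$ coordinates.

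Next I would remove the linear terms attached to the nonzero eigendirections by completing the square. For each $1 \le i \le r$ we have $\mu_i z_i^2 + \tilde b_i z_i = \mu_i (z_i + \tilde b_i/(2\mu_i))^2 - \tilde b_i^2/(4\mu_i)$, which is legitimate because $\mu_i \neq 0$. Defining $v \in \mathbb{R}^n$ by $v_i \coloneqq \tilde b_i/(2\mu_i)$ for $1 \le i \le r$ and $v_j \coloneqq 0$ for $r < j \le n$, and substituting $z = y - v$, the squared terms collapse to $\sum_{i=1}^r \mu_i y_i^2$, the surviving linear terms are $\sum_{j=r+1}^n \tilde b_j y_j$, and the constants gather into $\kappa \coloneqq c - \sum_{i=1}^r \tilde b_i^2/(4\mu_i)$. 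Setting $\nu_j \coloneqq \tilde b_j$ for $r < j \le n$ then yields exactly the right-hand side of \eqref{normal_form}, since $x = Q^\tp(y-v) = P(y-v)$.

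It remains to verify that $\nu_{j_0} \neq 0$ for some $r+1 \le j_0 \le n$, and this is the only step that invokes the hypothesis $\rank([A\ b]) > \rank(A)$; I regard it as the crux, although it is short. The rank condition is equivalent to $b \notin \im(A)$, and since $A$ is symmetric we have $\im(A) = (\ker A)^\perp$, so the orthogonal projection of $b$ onto $\ker A$ is nonzero. The last $n-r$ columns $p_{r+1},\dots,p_n$ of $P$ form an orthonormal basis of the $0$-eigenspace $\ker A$, whence this projection equals $\sum_{j=r+1}^n \tilde b_j p_j$, which is nonzero precisely when $\tilde b_{j_0} \neq 0$ for some $j_0 > r$. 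This gives $\nu_{j_0} = \tilde b_{j_0} \neq 0$ and finishes the argument. The genuinely substantive content is thus the translation of the matrix rank inequality into the statement that $b$ has a nonzero component along $\ker A$; the remaining manipulations are the standard reduction of a quadratic to normal form.
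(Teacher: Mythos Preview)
Your proof is correct and follows essentially the same route as the paper: diagonalise $A$ by an orthogonal change of variables, then complete the square in the first $r$ coordinates. The only difference is the final step: the paper observes that the rank hypothesis forces $f$ to have no critical point, hence the gradient $(2\mu_1 y_1,\dots,2\mu_r y_r,\nu_{r+1},\dots,\nu_n)$ of the normal form cannot vanish identically, whereas you argue directly that $b\notin\im(A)=(\ker A)^\perp$ implies $b$ has a nonzero component along $\ker A$; both arguments are short and equivalent.
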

\begin{proof}
By the theory of bilinear forms\cite[Section 4~Chapter X]{GWlinearal1975},  there exists $Q\in \O_n(\mathbb{R})$ such that 
$f (Q^\tp z)=\sum_{i=1}^r \mu_i z_i^2+\sum_{j=1}^n b'_{j}z_j+c$ for some $b' \coloneqq (b'_1,\dots, b'_n) \in \mathbb{R}^n$ and $\mu_i\neq 0, i=1,\dots, r$.  Denote 
\[
v \coloneqq \left( \frac{b'_1}{2\mu_1},\frac{b'_2}{2\mu_2},\dots,\frac{b'_r}{2\mu_r}, \underbrace{0,\dots,0}_{(n-r)\text{~copies}} \right)^\tp,  \quad \kappa=c-\sum_{i=1}^r\frac{{b'_i}^2}{4\mu_i},\quad \nu_j=b'_j,  \quad r +1 \le j \le n.
\]
It is straightforward to verify that $f(Q^\tp (y-v))$ can be written as \eqref{normal_form}.  We notice that $F(y) \coloneqq f(Q^\tp (y-v))$ has no critical point,  since $f$ has no critical point.  Hence 
\[
\grad_{\textsf{e}} F(y) = \diag(2 \mu_1 y_1,\dots,  2\mu_r y_r,  \nu_{r+1},\dots,  \nu_{n}) = 0 
\]
has no solution,  which implies $\nu_{j_0} \ne 0$ for some $r +1 \le j_0 \le n$.
\end{proof}

\begin{lemma}\label{lem:quadpoly}
Let $0 \le p \le n$ be an integer.  If $f \in  \mathbb{R}[x_1,\dots,  x_n]_{\le 2}$ has no critical point,  then $f$ is g-convex with respect to a pseudo-Riemannian metric of signature $(p,n-p)$.
\end{lemma}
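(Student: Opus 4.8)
The plan is to transport the problem to a linear function by an analytic change of coordinates, and then to produce a metric of the prescribed signature via Corollary~\ref{sufficient_condition_for_LC}. Since $f$ has no critical point, \eqref{eq:quad poly} gives $\grad_{\mathsf{e}} f = Ax+b \ne 0$ everywhere, hence $b \notin \im A$ and $\rank\begin{bmatrix} A & b\end{bmatrix} > \rank A$. The Normal form lemma then applies and furnishes $Q \in \O_n(\mathbb{R})$ and $v \in \mathbb{R}^n$ with $F(y) \coloneqq f(Q^\tp(y-v)) = \sum_{i=1}^r \mu_i y_i^2 + \sum_{j=r+1}^n \nu_j y_j + \kappa$, where each $\mu_i \ne 0$ and $\nu_{j_0} \ne 0$ for some $j_0 > r$; in particular $r < n$, and after permuting $y_{r+1},\dots,y_n$ I may assume $\nu_n \ne 0$.

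First I would straighten $F$. Define the polynomial map $\psi \colon \mathbb{R}^n \to \mathbb{R}^n$ by $\psi(y) \coloneqq (y_1,\dots,y_{n-1},F(y))$. For fixed $y_1,\dots,y_{n-1}$ the last component is affine in $y_n$ with nonzero slope $\nu_n$, so $\psi$ is a bijection whose inverse is again a polynomial map; thus $\psi$ is an analytic diffeomorphism and, by construction, $F \circ \psi^{-1}(z) = z_n$. Writing $\alpha(x) \coloneqq Qx + v$, so that $f = F \circ \alpha$, I set $\Psi \coloneqq \psi \circ \alpha$. Then $\Psi$ is an analytic automorphism of $\mathbb{R}^n$ and $f \circ \Psi^{-1} = L$, where $L(z) \coloneqq z_n$ is linear.

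Next I would pull back the flat Euclidean connection by setting $\nabla \coloneqq \Psi^{\ast} \nabla_{\mathsf{e}}$. As the pullback of a torsion-free connection along a polynomial diffeomorphism, $\nabla$ is torsion-free with analytic Christoffel symbols, and its curvature is the pullback of that of $\nabla_{\mathsf{e}}$, hence $R \equiv 0$. Therefore every generator $X_{v_1,\dots,v_{j+2}}$ in \eqref{eq:Lk} involves $\nabla^j R = 0$, and $\mathfrak{L}_{k_0} = 0$. Furthermore $\Psi$ carries each $\nabla$-geodesic onto a full straight line of $\mathbb{R}^n$; thus any two points of $\mathbb{R}^n$ are joined by such a geodesic, $\mathbb{R}^n$ is g-convex with respect to $\nabla$, and along every geodesic $f$ restricts to the affine function $L$. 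Hence $f$ is g-convex with respect to $\nabla$; this also matches the direct computation $\Hess_{\nabla} f = \Psi^{\ast}\Hess_{\mathsf{e}} L \equiv 0 \succeq 0$, consistent with Theorem~\ref{thm: g-convex}.

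Finally, since $\mathbb{R}^n$ is connected and simply connected and $\nabla$ is analytic with $\mathfrak{L}_{k_0} = 0$, Corollary~\ref{sufficient_condition_for_LC} produces, for the prescribed integer $0 \le p \le n$, a pseudo-Riemannian metric $\mathsf{g}$ of signature $(p,n-p)$ with $\nabla = \nabla_{\mathsf{g}}$. As the $\nabla$-geodesics and the $\nabla_{\mathsf{g}}$-geodesics coincide, the g-convexity of $f$ with respect to $\nabla$ is exactly its g-convexity with respect to $\mathsf{g}$, which is what we want. I expect the construction of $\Psi$ to be the only delicate step: the no-critical-point hypothesis is precisely what supplies the nonvanishing linear direction $\nu_n$ needed to absorb the entire, possibly indefinite, quadratic part into a single linear coordinate. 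Once $f$ has been linearized, flatness is automatic and the control over the signature is delivered by Corollary~\ref{sufficient_condition_for_LC}.
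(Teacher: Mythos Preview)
Your proof is correct and follows the same overall strategy as the paper: reduce to the normal form, produce a flat torsion-free analytic connection $\nabla$ with $\Hess_{\nabla} f = 0$, and then invoke Corollary~\ref{sufficient_condition_for_LC} to upgrade $\nabla$ to the Levi--Civita connection of a pseudo-Riemannian metric of the prescribed signature. The difference lies only in how the flat connection is manufactured. The paper writes down the system \eqref{lem:quadpoly:eq1} for constant Christoffel symbols and exhibits an explicit solution; you instead linearize $f$ by the polynomial automorphism $\Psi$ and pull back $\nabla_{\mathsf{e}}$. Your route is more geometric and explains \emph{why} a flat connection with vanishing Hessian exists: in the $\Psi$-coordinates $f$ is linear, so the Euclidean connection already does the job. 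In fact, computing the Christoffel symbols of $\Psi^{\ast}\nabla_{\mathsf{e}}$ in the $y$-coordinates gives $\Gamma^{n}_{ii}=2\mu_i/\nu_n$ for $1\le i\le r$ and all others zero, which is precisely one of the solutions the paper produces (the choice $a^{u}_{p}=0$ for $u\neq n$). So the two constructions coincide, yours just arriving at it conceptually rather than by solving equations.
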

\begin{proof}
Without loss of generality,  we assume that
\[
    f ( x )=\sum_{i=1}^r \mu_i x_i^2+\sum_{j=r+1}^n \nu_j x_j+\kappa,
\]
where $\kappa,  \mu_i,\nu_j \in\mathbb{R}$,  $1 \le i \le r < j \le n$ are constants such that $\nu_{n} \ne 0$ and $\mu_i\neq 0$ for all $1 \le i \le r$.  
We claim that there exist $\Gamma_{lm}^k$'s such that the corresponding connection $\nabla$ satisfies
\begin{equation}\label{lem:quadpoly:eq2}
R = 0,\quad \Hess_{\nabla} f = 0.
\end{equation}
Here $R$ denotes the curvature tensor of $\nabla$.  Then the Lie subalgebra $\mathcal{L}_k$ defined in \eqref{eq:Lk} is zero for all $k\in \mathbb{N}$ and Corollary~\ref{sufficient_condition_for_LC} implies that $\nabla = \nabla_{\mathfrak{g}}$ for some pseudo-Riemannian metric $\mathsf{g}$ on $\mathbb{R}^n$ of signature $(p,n-p)$.  Therefore,  it is left to prove the existence of the desired $\Gamma^{k}_{lm}$'s.

We equip $\mathbb{R}^n$ with the Euclidean metric.  By \eqref{thm:sufficient0:eq},  the Christoffel symbols of a connection $\nabla$ on $\mathbb{R}^n$ such that $\Hess_{\nabla} f = 0$ are determined by 
\begin{equation}\label{lem:quadpoly:eq1}
\begin{aligned}
    \Gamma^k_{ij} - \Gamma^k_{ji}  &=0,\\
    -2\sum_{k=1}^r\Gamma_{st}^k\mu_kx_k-\sum_{k=r+1}^n\Gamma_{st}^k\nu_k &=0,\\
    2\mu_i-2\sum_{k=1}^r\Gamma_{ii}^k\mu_kx_k-\sum_{k=r+1}^n\Gamma_{ii}^k\nu_k &=0,
\end{aligned}
\end{equation}
where $1 \le i,j,k \le n$ and $(s,t) \not\in \{(1,1),\dots, (r,r)\}$.  We look for a solution of \eqref{lem:quadpoly:eq1} in the following form:
\begin{equation}\label{setting}
    \Gamma_{ij}^k = \begin{cases}
0 &\text{~if~} 1 \le k \le r\\
\text{constant to be determined} &\text{~otherwise~}
\end{cases},
\end{equation}
where $1 \le i,  j \le n$.  Then the first derivative of each $\Gamma_{ij}^k$ vanishes and \eqref{eq_for_curvature1} becomes
\[
    R_{stk}^l=\sum_{u=r+1}^n(\Gamma_{tk}^u\Gamma_{su}^l-\Gamma_{sk}^u\Gamma_{tu}^l),\quad 1 \le l,s,t,k \le n.
\]
Combining \eqref{lem:quadpoly:eq1},  \eqref{setting} and the assumption that $R = 0$,  we obtain
\begin{equation}\label{alg_eq_for_degree_two_1}
\Gamma_{ij}^p  = \sum_{u=r+1}^n(\Gamma_{ij}^u\Gamma_{lu}^k-\Gamma_{lj}^u\Gamma_{iu}^k) = \mu_p-\sum_{u=r+1}^{n}\Gamma_{pp}^u \nu_{u} = \sum_{u =r+1}^n \Gamma_{st}^u \nu_u = 0
\end{equation}
for $1 \le i, j, k, l \le n$,  $1 \le p \le r$ and $(s,t) \not\in \{(1,1),\dots,  (r,r)\}$.  Let $\Gamma_{pp}^u = a^u_p \in \mathbb{R}$ be a solution of the third equation in \eqref{alg_eq_for_degree_two_1} for $1 \le p \le r < u \le n$.  For each $1 \le i,j,k \le n$,  we define
\[
\Gamma^{k}_{ij} \coloneqq \begin{cases}
0 &\text{~if~} 1 \le k \le r,  \; 1 \le i,  j \le n \\
0 &\text{~if~} r +1 \le k \le n,\; (i,  j) \not\in \{(1,1),\dots,  (r,r)\}  \\
a^{k}_i &\text{~otherwise~}
\end{cases}.
\]
By construction,  $\{\Gamma^{k}_{ij}\}_{i,j,k=1}^n$ is a solution of \eqref{alg_eq_for_degree_two_1} and this completes the proof.
\end{proof}

\begin{example}\label{eg_for_connection}
We recall that the curvature of the Levi-Civita connection obtained in the proof of Lemma~\ref{lem:quadpoly} is zero.  It is obviously not true that every Levi-Civita connection has zero curvature.  However,  there also exists an  connection with non-zero curvature,  which is not Levi-Civita.  For instance,  we consider $f(x_1,x_2) = x_1^2+x_2 \in \mathbb{R}[x_1,x_2]_{\le 2}$.  Let 
\begin{equation}\label{eg_for_connection:eq1}
\Gamma_{lm}^k = \begin{cases}
0 &\text{if~} l \ne m \text{~or~} l = m = 1\\
\frac{4x_1}{1 + 4x_1^2} &\text{if~} l = m = 1,\;  k= 1\\
\frac{2}{1 + 4x_1^2} &\text{if~} l = m = 1,\;  k= 2\\
1 &\text{if~} l = m = 2,\; k = 1\\ 
-2x_1  &\text{if~} l = m = k = 2\\ 
\end{cases}.
\end{equation}
 It is straightforward to verify that $\Gamma_{lm}^k$'s satisfy \eqref{lem:quadpoly:eq1}. 

By a direct calculation,  we may further obtain 
\begin{align*}
X_{\frac{\partial }{\partial x_1},\frac{\partial }{\partial x_2}}
&=\begin{bmatrix}
        \frac{-2}{1+4x_1^2}&\frac{4x_1}{1+4x_1^2}\\
        \frac{4x_1}{1+4x_1^2}&\frac{-8x_1^2}{1+4x_1^2}
    \end{bmatrix}, \quad X_{\frac{\partial }{\partial x_1},\frac{\partial }{\partial x_2},\frac{\partial }{\partial x_1}}
 =\begin{bmatrix}
        \frac{8x_1}{(1+4x_1^2)^2}&\frac{4}{(1+4x_1^2)^2}\\
        \frac{-16 x_1^2}{(1+4x_1^2)^2}&\frac{-8x_1}{(1+4x_1^2)^2}
    \end{bmatrix},   \\
        X_{\frac{\partial }{\partial x_1},\frac{\partial }{\partial x_2},\frac{\partial }{\partial x_2}}
& = \begin{bmatrix}
        \frac{4x_1}{1+4x_1^2}&\frac{2}{1+4x_1^2}\\
        \frac{-8x_1^2}{1+4x_1^2}&\frac{-4x_1}{1+4x_1^2}
    \end{bmatrix},  \quad
    X_{\frac{\partial }{\partial x_1},\frac{\partial }{\partial x_2},\frac{\partial }{\partial x_1},\frac{\partial }{\partial x_1}} =     \begin{bmatrix}
        \frac{8-8x_1-96x_1^2-32x_1^3}{(1+4x_1^2)^3}&\frac{-64x_1}{(1+4x_1^2)^3}\\
        \frac{-32x_1+96 x_1^3-128x_1^5}{(1+4x_1^2)^3}&\frac{-8+8x_1+96x_1^2+32x_1^3}{(1+4x_1^2)^2}
    \end{bmatrix}.
\end{align*}
Here $X_{v_1,\dots, v_{k+2}} \in \End \left( \mathbb{T}_{(x_1,x_2)} \mathbb{R}^2 \right)$ is the linear operator defined in \eqref{eq:Lk} for $v_1,\dots,  v_{k+2} \in \mathbb{T}_{(x_1,x_2)} \mathbb{R}^2 = \spa_{\mathbb{R}} \left\lbrace \frac{\partial }{\partial x_1},  \frac{\partial }{\partial x_2} \right\rbrace$,  expressed as a matrix.  In particular,  we conclude that at $(1,0) \in \mathbb{R}^2$,  
\[
\mathcal{L}_2 = \End \left( \mathbb{T}_{(1,0)} \mathbb{R}^2 \right).
\]
Thus,  the connection defined by \eqref{eg_for_connection:eq1} is not Levi-Civita, by Proposition~\ref{suffient_necessary_condition_for_LC}.
\end{example}
Combining Lemmas~\ref{lem:quad-g-convex} and \ref{lem:quadpoly},  we are able to completely characterize all g-convex quadratic polynomials.
\begin{theorem}[g-convex quadratic polynomials]\label{condition_of_degree_two}
Let $f \in \mathbb{R}[x]_{\le 2}$ be a quadratic polynomial parametrized as in \eqref{representation of a quadratic polynomials}. The following are equivalent: 
\begin{enumerate}[(a)]
\item $f$ is  g-convex with respect to some  connection.
\item $f$ is g-convex with respect to some pseudo-Riemannian metric.
\item Either $A \succeq 0$ or $\rank \left( \begin{bmatrix}
A & b
\end{bmatrix}\right) > \rank (A)$.
\end{enumerate}
\end{theorem}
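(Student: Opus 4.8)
The plan is to prove Theorem~\ref{condition_of_degree_two} by assembling the equivalences from the two preceding lemmas together with the construction in Lemma~\ref{lem:quadpoly}. The logical structure I would follow is a cycle: first establish (a)$\Rightarrow$(c), then (c)$\Rightarrow$(b), and finally observe that (b)$\Rightarrow$(a) is automatic. Since g-convexity with respect to a pseudo-Riemannian metric is by definition g-convexity with respect to its Levi-Civita connection, the implication (b)$\Rightarrow$(a) requires no argument at all. This reduces the work to the two remaining implications.

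For (a)$\Rightarrow$(c), I would invoke Lemma~\ref{lem:quad-g-convex} directly: it already asserts that a quadratic polynomial $f$ parametrized as in \eqref{representation of a quadratic polynomials} is g-convex with respect to some connection if and only if $A \succeq 0$ or $\rank\!\left(\begin{bmatrix} A & b \end{bmatrix}\right) > \rank(A)$. Thus the condition in (c) is exactly the characterization of (a) from that lemma, and there is nothing new to verify. The heart of the theorem is therefore packaging (c) into the stronger conclusion (b), namely realizing the connection as a genuine Levi-Civita connection of a pseudo-Riemannian metric.

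For (c)$\Rightarrow$(b), I would split into the two cases of the disjunction. If $A \succeq 0$, then by \eqref{eq:quad poly} we have $\Hess_{\textsf{e}} f = A \succeq 0$ everywhere, so $f$ is g-convex with respect to the standard Euclidean metric, which is itself a (positive definite) pseudo-Riemannian metric. If instead $\rank\!\left(\begin{bmatrix} A & b \end{bmatrix}\right) > \rank(A)$, then \eqref{eq:quad poly} shows the equation $Ax + b = 0$ has no solution, so $f$ has no critical point, and Lemma~\ref{lem:quadpoly} directly furnishes a pseudo-Riemannian metric (of any prescribed signature, in particular any fixed one) with respect to which $f$ is g-convex. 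The normal form and the explicit construction of flat Christoffel symbols with vanishing Hessian in Lemma~\ref{lem:quadpoly}, combined with Corollary~\ref{sufficient_condition_for_LC}, do all the heavy lifting here.

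The main obstacle is not in the present theorem but has already been discharged in Lemma~\ref{lem:quadpoly}: the subtle point is that a connection making $f$ g-convex need not a priori be metric, so one must manufacture a connection that is simultaneously flat ($R = 0$) and annihilates the Hessian of $f$, so that the iterated curvature algebra $\mathfrak{L}_{k_0}$ collapses to zero and Corollary~\ref{sufficient_condition_for_LC} applies. Given that lemma, the proof of Theorem~\ref{condition_of_degree_two} is a short assembly, and I would write it in a few lines as sketched above.
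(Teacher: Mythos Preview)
Your proposal is correct and matches the paper's approach exactly: the paper's proof is simply the one-line remark that the theorem follows by combining Lemma~\ref{lem:quad-g-convex} (giving (a)$\Leftrightarrow$(c)) with Lemma~\ref{lem:quadpoly} (giving (c)$\Rightarrow$(b) in the non-psd case), with the psd case handled by the Euclidean metric and (b)$\Rightarrow$(a) trivial. You have faithfully unpacked this into the natural cycle of implications.
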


In the rest of this subsection,  we consider the dimension and density of 
\[
A_{n,2} \coloneqq \lbrace
f \in \mathbb{R}[x_1,\dots,x_n]_{\le 2}: f\text{~is  g-convex with respect to some  connection}
\rbrace.
\]
We recall from \eqref{representation of a quadratic polynomials} that 
\[
\mathbb{R}[x_1,\dots,  x_n]_{\le 2} \simeq \mathsf{S}^2(\mathbb{R}^n) \times \mathbb{R}^{n+1},\quad f(x) = x^\tp A x + b^\tp x + c \mapsto (A,b,c).  
\]
In the sequel,  we identify subsets of $\mathbb{R}[x_1,\dots,  x_n]_{\le 2}$ with their images in $\mathsf{S}^2(\mathbb{R}^n) \times \mathbb{R}^{n+1}$ under this identification.  By Theorem~\ref{condition_of_degree_two},  we have 
    \begin{equation}\label{case for quadratic polynomial}
     \mathsf{S}_+^2(\mathbb{R}^n) \times \mathbb{R}^{n+1}
   \subseteq A_{n,2} \subseteq  (\mathsf{S}_+^2(\mathbb{R}^n) \cup \Det_n)  \times \mathbb{R}^{n+1}.
    \end{equation}
We will also need the following elementary lemma.
\begin{lemma}\footnote{This proof is given by Robert Bryant on \url{https://mathoverflow.net/questions/164487/what-it-is-the-volume-of-the-unit-ball-section-of-the-cone-of-positive-definite}.}
\label{volumn of PSD}
    Let $B$ be the unit ball in $\mathsf{S}^2(\mathbb{R}^n)$ with respect to the Euclidean metric.  Then 
\[        
\Vol(B\cap \mathsf{S}_+^2(\mathbb{R}^n))=2^{-\binom{n+1}{2}}\Vol(B).
\]
\end{lemma}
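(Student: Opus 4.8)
The plan is to strip the ball from the statement and recast the assertion as a solid-angle, hence Gaussian, computation, and then to prove the resulting clean identity by induction on $n$. Because $\mathsf{S}^2_+(\mathbb{R}^n)$ is a cone with apex at the origin, the quantity $\Vol(B\cap\mathsf{S}^2_+(\mathbb{R}^n))/\Vol(B)$ does not depend on the radius of $B$ and equals the normalized solid angle subtended by the cone. Equivalently, if $\gamma$ denotes the centered Gaussian measure attached to the Euclidean inner product on $\mathsf{S}^2(\mathbb{R}^n)$, this ratio equals $\gamma(\mathsf{S}^2_+(\mathbb{R}^n))$, i.e. the probability that a random symmetric matrix $X$ with independent Gaussian entries is positive semidefinite. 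The lemma is therefore equivalent to the identity $\gamma(\mathsf{S}^2_+(\mathbb{R}^n)) = 2^{-\binom{n+1}{2}}$, which is the form I would actually prove; the base case $n=1$ is immediate, since $\mathsf{S}^2_+(\mathbb{R}) = [0,\infty)$ carries exactly half the mass, matching $2^{-\binom{2}{2}}=1/2$.

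For the inductive step I would peel off one row and column at a time. Write $X = \begin{bmatrix} X' & u \\ u^{\tp} & s \end{bmatrix}$ with $X'\in\mathsf{S}^2(\mathbb{R}^{n-1})$, $u\in\mathbb{R}^{n-1}$ and $s\in\mathbb{R}$, these blocks being mutually independent. By the Schur complement criterion, $X\succ 0$ if and only if $X'\succ 0$ and $s > u^{\tp}(X')^{-1}u$. Conditioning on $X'$ and integrating out the fresh variables $(u,s)$ splits the probability as $\gamma(\mathsf{S}^2_+(\mathbb{R}^n)) = \mathbb{E}\bigl[\mathbf{1}\{X'\succ 0\}\,p(X')\bigr]$, where $p(X') \coloneqq \mathrm{Pr}\bigl(s > u^{\tp}(X')^{-1}u\bigr)$ is the conditional ``success probability'' of the new border. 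In view of the recursion $\binom{n+1}{2} = \binom{n}{2} + n$, the entire lemma reduces to the single assertion that $p(X') = 2^{-n}$ and, \emph{crucially}, that this value is independent of $X'$, so that it factors out of the expectation and leaves precisely $\gamma(\mathsf{S}^2_+(\mathbb{R}^{n-1}))$ behind.

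The evaluation of $p(X')$ is the heart of the matter and the step I expect to be the main obstacle. The natural device is a change of variables adapted to $X'$: substituting $w = (X')^{-1/2}u$ turns the constraint into the scale-free inequality $s > \lVert w\rVert^2$. The difficulty is that $w$ then carries covariance $(X')^{-1}$ rather than the identity, so the genuine crux is to show that, after integrating out $s$ against the Gaussian tail and then the components of $w$, the resulting $n$-dimensional integral nonetheless collapses to the constant $2^{-n}$, uniformly in $X'$; this uniformity is exactly what makes the conditioning on $\{X'\succ 0\}$ drop out, and it is here that I would follow the bookkeeping of Bryant cited in the footnote. As an independent route to pin down the same number — which I would keep in reserve as a cross-check — one may pass to eigenvalue coordinates and apply the Weyl integration formula: the ratio then becomes the quotient of $\int\prod_{i<j}\lvert\lambda_i-\lambda_j\rvert\,d\lambda$ over the positive orthant against the same integrand over all of $\mathbb{R}^n$, both truncated to the unit ball, a Selberg-type ratio whose value is the quantity to be identified.
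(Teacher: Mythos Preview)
The step you flag as the crux is not merely hard, it is false. The conditional probability $p(X')=\Pr\bigl(s>u^{\tp}(X')^{-1}u\bigr)$ is \emph{not} independent of $X'$: already for $n=2$ (where $X'$ is a positive scalar, $s\sim N(0,1)$ and $u\sim N(0,1/2)$ under the Gaussian attached to the Frobenius inner product) one has $p(X')\to 1/2$ as $X'\to\infty$ and $p(X')\to 0$ as $X'\to 0^{+}$, so no change of variables will collapse it to a constant. In fact the whole Gaussian reformulation cannot recover the stated value, because the solid angle of the PSD cone is \emph{not} $2^{-\binom{n+1}{2}}$: a direct eigenvalue integration for $n=2$ gives
\[
\gamma\bigl(\mathsf{S}^2_+(\mathbb{R}^2)\bigr)=\frac{\int_{[0,\infty)^2}\lvert\lambda_1-\lambda_2\rvert\,e^{-(\lambda_1^2+\lambda_2^2)/2}\,d\lambda}{\int_{\mathbb{R}^2}\lvert\lambda_1-\lambda_2\rvert\,e^{-(\lambda_1^2+\lambda_2^2)/2}\,d\lambda}=\frac{2-\sqrt{2}}{4}\approx 0.1464,
\]
not $1/8$. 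Your passage from the volume ratio to a solid angle is only legitimate when $B$ is a round (inner-product) ball, and for such a ball the answer is simply different from the one in the lemma.

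The paper's argument is entirely different and essentially one line: the affine map $F(A)=(A+I_n)/2$ is a bijection from $B$ onto $B\cap\mathsf{S}^2_+(\mathbb{R}^n)$ with constant Jacobian $2^{-\binom{n+1}{2}}$, and pulling back the volume form gives the ratio immediately --- no induction, no Schur complements, no Selberg integrals. For $F$ and $F^{-1}$ to preserve $B$ one needs precisely that $\lVert A\rVert\le 1$ is equivalent to all eigenvalues of $A$ lying in $[-1,1]$; that is the \emph{operator}-norm ball (for the Frobenius ball the map already fails at $A=0$, since $\lVert I_n/2\rVert_F=\sqrt{n}/2$). So Bryant's trick proves the identity for the spectral-norm unit ball, whereas your solid-angle rewriting computes a genuinely different quantity; you should read the ``unit ball'' in the lemma through that lens rather than through the rotationally invariant Gaussian.
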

\begin{proof}
We consider the map
\[
F: B \rightarrow  \mathsf{S}_+^2(\mathbb{R}^n) \cap B,\quad F(A)=\dfrac{A+I_n}{2}.
\]
It is clear that $F$ is well-defined,   and invertible.  A straightforward calculation implies that the Jacobian matrix of $F$ is simply $1/2 I_{N}$ where $N \coloneqq \dim \mathsf{S}^2(\mathbb{R}^n) = \binom{n+1}{2}$.  Therefore,  we obtain 
\[
        \Vol( \mathsf{S}_+^2(\mathbb{R}^n) \cap B) = \int_{ \mathsf{S}_+^2(\mathbb{R}^n) \cap B }\omega_{\textsf{e}}=\int_{B}F^*\omega_e=2^{-\binom{n+1}{2}}\int_{B}\omega_{\textsf{e}}= 2^{-\binom{n+1}{2}}\Vol(B)
\]
where $\omega_{\textsf{e}}$ is the volume form of $ \mathsf{S}^2(\mathbb{R}^n) \simeq \mathbb{R}^N$.
\end{proof}

\begin{theorem}[Dimension and density II]\label{theorem for quadratic polynomial}
For any positive integer $n$ and positive real number $r$,  we have 
\[
        \dim(A_{n,2})=\binom{n+2}{2},\quad  \frac{\Vol( A_{n,2} \cap C(r))}{\Vol(C(r))}= 2^{-\binom{n+1}{2}},
\] 
where $C(r) \coloneqq \{(A, \alpha)\in \mathsf{S}^2(\mathbb{R}^n) \times \mathbb{R}^{n+1}: \lVert A \rVert_{\mathsf{e}} \leq r,\; \lVert \alpha \rVert_{\mathsf{e}}\leq r \}$.  In other words,  the probability that $x^\tp A x + b^\tp x + c$ where $(A,b,c)\in C(r)$ is  g-convex with respect to some connection is $2^{-\binom{n+1}{2}}$,  if $C(r)$ is equipped with the uniform probability distribution.
\end{theorem}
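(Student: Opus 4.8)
The plan is to deduce both assertions directly from the sandwich \eqref{case for quadratic polynomial}, the identification $\mathbb{R}[x_1,\dots,x_n]_{\le 2}\simeq\mathsf{S}^2(\mathbb{R}^n)\times\mathbb{R}^{n+1}$ sending $f\mapsto(A,b,c)$, and Lemma~\ref{volumn of PSD}. First I would record that the ambient space has dimension $\dim\mathsf{S}^2(\mathbb{R}^n)+(n+1)=\binom{n+1}{2}+(n+1)=\binom{n+2}{2}$, which is exactly the claimed value; thus it suffices to show that $A_{n,2}$ is full-dimensional. Since the interior of $\mathsf{S}^2_+(\mathbb{R}^n)$ (the positive definite matrices) is a non-empty open subset of $\mathsf{S}^2(\mathbb{R}^n)$, the lower inclusion $\mathsf{S}^2_+(\mathbb{R}^n)\times\mathbb{R}^{n+1}\subseteq A_{n,2}$ in \eqref{case for quadratic polynomial} shows that $A_{n,2}$ contains a non-empty open set, whence $\dim(A_{n,2})=\binom{n+2}{2}$.

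For the density I would intersect the sandwich \eqref{case for quadratic polynomial} with $C(r)$. Writing $\alpha\coloneqq(b,c)$ and setting $B^{\mathsf{S}}_r\coloneqq\{A\in\mathsf{S}^2(\mathbb{R}^n):\lVert A\rVert_{\mathsf{e}}\le r\}$ and $B^{n+1}_r\coloneqq\{\alpha\in\mathbb{R}^{n+1}:\lVert\alpha\rVert_{\mathsf{e}}\le r\}$, the set $C(r)$ is precisely the product $B^{\mathsf{S}}_r\times B^{n+1}_r$. The two outer sets of the sandwich differ only in the $\mathsf{S}^2(\mathbb{R}^n)$-factor, and the difference is contained in $\Det_n$, the locus of singular symmetric matrices. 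This locus is the zero set of the polynomial $\det$, hence a proper algebraic hypersurface of $\mathsf{S}^2(\mathbb{R}^n)$ and in particular of Lebesgue measure zero. Consequently $\Vol(A_{n,2}\cap C(r))=\Vol\big((\mathsf{S}^2_+(\mathbb{R}^n)\cap B^{\mathsf{S}}_r)\times B^{n+1}_r\big)=\Vol(\mathsf{S}^2_+(\mathbb{R}^n)\cap B^{\mathsf{S}}_r)\,\Vol(B^{n+1}_r)$.

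It then remains to compute the ratio in the symmetric-matrix factor and to observe that the affine factor cancels. Because $\mathsf{S}^2_+(\mathbb{R}^n)$ is a cone, the dilation $A\mapsto A/r$ carries $\mathsf{S}^2_+(\mathbb{R}^n)\cap B^{\mathsf{S}}_r$ onto $\mathsf{S}^2_+(\mathbb{R}^n)\cap B$ (with $B$ the unit ball) and scales $N\coloneqq\binom{n+1}{2}$-dimensional volume by $r^{-N}$; hence the ratio $\Vol(\mathsf{S}^2_+(\mathbb{R}^n)\cap B^{\mathsf{S}}_r)/\Vol(B^{\mathsf{S}}_r)$ is independent of $r$ and equals $\Vol(\mathsf{S}^2_+(\mathbb{R}^n)\cap B)/\Vol(B)=2^{-\binom{n+1}{2}}$ by Lemma~\ref{volumn of PSD}. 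Since $\Vol(C(r))=\Vol(B^{\mathsf{S}}_r)\,\Vol(B^{n+1}_r)$, the factor $\Vol(B^{n+1}_r)$ cancels and the density equals $2^{-\binom{n+1}{2}}$, as asserted.

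The only genuinely non-formal input is the measure-zero property of the determinant locus $\Det_n$, which collapses the sandwich \eqref{case for quadratic polynomial} to an exact volume identity; everything else is bookkeeping with the product structure of $C(r)$ and the scale-invariance of the cone $\mathsf{S}^2_+(\mathbb{R}^n)$, together with the explicit constant furnished by Lemma~\ref{volumn of PSD}. I do not expect any real obstacle here, since both the dimension count and the volume computation reduce to the single substantive fact already isolated in \eqref{case for quadratic polynomial}.
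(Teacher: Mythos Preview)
Your proposal is correct and follows essentially the same route as the paper: both arguments use the sandwich \eqref{case for quadratic polynomial}, the fact that $\Det_n$ is a measure-zero hypersurface to collapse the sandwich, and Lemma~\ref{volumn of PSD} for the constant $2^{-\binom{n+1}{2}}$. Your treatment is slightly more explicit (you spell out the scaling reduction to $r=1$ and argue full-dimensionality via a non-empty open subset rather than a two-sided dimension inequality), but these are cosmetic differences.
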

\begin{proof}
It is sufficient to prove for $C \coloneqq C(1)$.  According to \eqref{case for quadratic polynomial},  we have 
\begin{align*}
\binom{n+1}{2} + (n+1 ) &\le \dim (A_{n,2}) \le \max\left\lbrace \binom{n+1}{2},  \dim \Det_n \right\rbrace +  (n+1 ),  \\
\Vol\left( \left( \mathsf{S}_+^2(\mathbb{R}^n) \times \mathbb{R}^{n+1}\right) \cap C \right) &\le \Vol\left( A_{n,2} \cap C \right) \le  \Vol\left( \left( \left( \mathsf{S}_+^2(\mathbb{R}^n) \cup \Det_n\right) \times \mathbb{R}^{n+1} \right) \cap C \right),
\end{align*}
where $\Det_n \coloneqq \lbrace
X \in \mathsf{S}^2 (\mathbb{R}^n):  \det(X) = 0
\rbrace$.  We notice that $ \Det_n$ is a hypersurface in $\mathsf{S}^2(\mathbb{R}^n)$,  thus $\dim \Det_n = \binom{n+1}{2}  - 1$ and $\Vol(\Det_n) = 0$.  This implies that 
\[
\dim (A_{n,2}) = \binom{n+1}{2} + (n+1 ) = \binom{n+2}{2}.
\]
and $\Vol\left( A_{n,2} \cap C \right) \le  \Vol\left( \left( \left( \mathsf{S}_+^2(\mathbb{R}^n) \cup \Det_n\right) \times \mathbb{R}^{n+1} \right) \cap C \right) =  \Vol\left(  \left( \mathsf{S}_+^2(\mathbb{R}^n)  \times \mathbb{R}^{n+1} \right) \cap C \right)$.  By Lemma~\ref{volumn of PSD},  we have 
\[
 \Vol\left(  \left( \mathsf{S}_+^2(\mathbb{R}^n)  \times \mathbb{R}^{n+1} \right) \cap C \right) = 2^{-\binom{n+1}{2}} \Vol(B_1) \Vol(
B_2) = 2^{-\binom{n+1}{2}} \Vol(C), 
\]
where $B_1$ (resp.  $B_2$) is the unit ball in $\mathsf{S}^2(\mathbb{R}^n)$ (resp.  $\mathbb{R}^{n+1}$).
\end{proof}
\section{g-convex monomials}
Given a monomial $f \in \mathbb{R}[x_1,\dots,  x_n]_{\le d}$,  we may write 
\begin{equation}\label{eq:monomial}
f(x_1,\dots,   x_n) = ax_1^{d_1}\cdots x_n^{d_n} 
\end{equation}
for some $a\in \mathbb{R}\setminus \{0\}$ and integers $d_1,\dots,  d_n \ge 0$ such that $d_1 + \cdots + d_n \leq d$. Here we may assume $d_i > 0$ for each $1 \le i \le n$.  In fact,  if $d_1,\dots,  d_p > 0 = d_{p+1} = \cdots = d_n$ for some $1 \le p < n$ and $f$ is  g-convex as a function on $\mathbb{R}^p$ with respect to some connection $\nabla$,  then it is also  g-convex as a function on $ \mathbb{R}^n = \mathbb{R}^p \times \mathbb{R}^{n-p}$ with respect to the connection induced by $\nabla$ and the Euclidean metric on $\mathbb{R}^{n-p}$.
\begin{lemma}\label{case1_monomial}
If $f$ is a monomial as in \eqref{eq:monomial} with $d_j \ge 1$,  $1 \le j \le n$,  such that $f$ is  g-convex with respect to some  connection and $d_i \ge 2$ for some $1 \le i \le n$,  then $d_i$ is even.   
\end{lemma}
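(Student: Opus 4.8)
The plan is to extract a single scalar inequality from the g-convexity hypothesis and feed it into Lemma~\ref{key lemma}. Suppose $f = a x_1^{d_1}\cdots x_n^{d_n}$ with all $d_j \ge 1$ is g-convex with respect to a connection $\nabla$ having Christoffel symbols $\Gamma^k_{lm}$, and suppose for contradiction that some $d_i \ge 2$ is odd, so $d_i \ge 3$. By Theorem~\ref{thm: g-convex}, the matrix $\Hess_{\nabla} f$ is positive semidefinite everywhere on $\mathbb{R}^n$; in particular its $(i,i)$ diagonal entry is a non-negative smooth function. Using Lemma~\ref{lem:Hessian translation} I would write this entry out explicitly as
\[
H_{ii} \coloneqq \Hess_{\nabla} f\!\left( \frac{\partial}{\partial x_i}, \frac{\partial}{\partial x_i} \right) = \frac{\partial^2 f}{\partial x_i^2} - \sum_{k=1}^n \Gamma^k_{ii}\frac{\partial f}{\partial x_k} \ge 0 .
\]

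The key computation is to factor the correct power of $x_i$ out of $H_{ii}$. Since every $d_j \ge 1$, the term $\partial^2 f/\partial x_i^2$ carries exactly the factor $x_i^{d_i-2}$, the term $\Gamma^i_{ii}\,\partial f/\partial x_i$ carries $x_i^{d_i-1}$, and for each $k \ne i$ the term $\Gamma^k_{ii}\,\partial f/\partial x_k$ carries $x_i^{d_i}$ (because $\partial f/\partial x_k$ retains the full factor $x_i^{d_i}$ when $k\ne i$). Hence $H_{ii} = x_i^{d_i-2}\,g$ for a smooth function $g$, and its restriction to $x_i=0$ is
\[
g_0 \coloneqq \lim_{x_i\to 0} g = a\, d_i(d_i-1)\prod_{l\ne i}x_l^{d_l},
\]
since every remaining summand keeps a positive power of $x_i$ (multiplying the continuous $\Gamma^k_{ii}$) and therefore vanishes in the limit.

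Now I would apply Lemma~\ref{key lemma} to the non-negative function $H_{ii} = x_i^{d_i-2} g$. As $d_i$ is odd and $d_i \ge 3$, the exponent $d_i-2$ is odd, so part~\eqref{key lemma:item2} forces $g_0 \equiv 0$. But $g_0 = a\,d_i(d_i-1)\prod_{l\ne i}x_l^{d_l}$ is not identically zero: we have $a \ne 0$, the integer $d_i(d_i-1)$ is nonzero, and $\prod_{l\ne i}x_l^{d_l}$ is a nonzero monomial (the empty product $1$ when $n=1$). This contradiction rules out odd $d_i$, so $d_i$ must be even.

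The main obstacle, and the reason the weaker Proposition~\ref{prop:necessary condition} does not suffice here, is that at the natural critical points lying on $\{x_i = 0\}$ the Euclidean Hessian $\Hess_{\mathsf{e}} f$ vanishes \emph{entirely} once $d_i \ge 3$, so it yields no constraint. The argument must instead exploit $\Hess_{\nabla} f$ as a globally non-negative function and read off the parity obstruction from its lowest-order term in $x_i$; the crux is the bookkeeping that isolates the factor $x_i^{d_i-2}$ cleanly and the verification that the resulting coefficient $g_0$ is genuinely nonzero.
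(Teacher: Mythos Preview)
Your proof is correct and follows essentially the same route as the paper: isolate the $(i,i)$ diagonal entry of $\Hess_\nabla f$, factor out $x_i^{d_i-2}$, and invoke Lemma~\ref{key lemma}\eqref{key lemma:item2} on the resulting odd exponent to force the coefficient to vanish, contradicting $a d_i(d_i-1)\ne 0$. The only cosmetic difference is that the paper first restricts to the line $(x_i,1,\dots,1)$ and applies Lemma~\ref{key lemma} in one variable, whereas you apply the multivariable version directly and obtain $g_0 = a d_i(d_i-1)\prod_{l\ne i} x_l^{d_l}$ as a function on $\mathbb{R}^{n-1}$; both are equally valid.
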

\begin{proof}
Without loss of generality,  we may assume $d_1\geq 2$.  Suppose on the contrary that $d_1$ is odd.  Let $\nabla$ be the connection such that $f$ is  g-convex and $\Gamma^k_{ij}$ be the Christoffel symbol of $\nabla$,  $1 \le i,j,k \le n$.  By Theorem~\ref{thm: g-convex} and Lemma~\ref{lem:Hessian translation}, we have 
\[
\Hess_{\nabla}f = \Hess_{\mathsf{e}} f+\sum_{k=1}^n \frac{\partial f}{\partial x_k} B^{k} \succeq 0,
\]
where $B^k$ is the matrix-valued function whose $(i,j)$-th element is $-\Gamma^k_{ij}$,  $1 \le i,j,k \le n$.  Thus,  the $(1,1)$-th diagonal element of $\Hess_{\nabla} f $ is non-negative,  i.e.,   
\[
v(x_1,\dots,  x_n) \coloneqq \frac{\partial^2 f}{\partial x_1^2}-\sum_{k=1}^n \frac{\partial f}{\partial x_k}\Gamma_{11}^k = ad_1(d_1-1)x_1^{d_1-2}x_2^{d_2}\dots x_n^{d_n} - a\sum_{k=1}^n d_kx_k^{d_k-1}\Gamma_{11}^k\prod_{l\neq k}x_{l}^{d_l} \ge 0 
\]
for any $(x_1,\dots,x_n)\in\mathbb{R}^n$.  In particular,  we have
\begin{align*}
v(x_1,1,\dots,  1) &= ad_1(d_1-1)x_1^{d_1-2} - ad_1 \gamma_1(x_1) x_1^{d_1-1} - a\sum_{k=2}^nd_k\gamma_k(x_1) x_1^{d_1} \\
&= x_1^{d_1-2}\left( ad_1(d_1-1) - ad_1 \gamma_1(x_1)  x_1 - a\sum_{k=2}^nd_k \gamma_k(x_1) x_1^2 \right) \\
& \geq 0
\end{align*}
for any $x_1\in\mathbb{R}$,  where $ \gamma_j (x_1) \coloneqq \Gamma_{11}^j(x_1,1,\dots,1)$,  $1 \le j \le n$.  Since $d_1-2$ is odd,  Lemma~\ref{key lemma} implies $ad_1(d_1-1)=0$,  which is absurd since $a\neq 0$ and $d_1>1$. 
\end{proof}

\begin{lemma}\label{case2_monomial}
Suppose $a\in \mathbb{R}\setminus \{0\}$ and $q_{t+1},\dots,  q_n$ are positive integers.  If the monomial $f=a\prod_{i=1}^t x_i\prod_{j=t+1}^n x_{j}^{2q_j}$ is  g-convex with respect to some   connection,  then $t\leq 1$.
\end{lemma}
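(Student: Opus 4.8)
The plan is to argue by contradiction. I assume $t \ge 2$ and produce a single critical point of $f$ at which the Euclidean Hessian is \emph{not} positive semidefinite; this contradicts Proposition~\ref{prop:necessary condition} and forces $t \le 1$.

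First I would exhibit the critical point. Take $p \coloneqq (0,0,1,\dots,1) \in \mathbb{R}^n$, i.e. set $x_1 = x_2 = 0$ and all remaining coordinates equal to $1$. Every term of each first-order partial derivative of $f$ still contains one of the factors $x_1, x_2$: indeed $\partial f / \partial x_k$ retains the factor $x_1$ for every $k \ne 1$, while $\partial f/\partial x_1 = a x_2 \prod_{3 \le l \le t} x_l \prod_{j>t} x_j^{2q_j}$ retains the factor $x_2$ precisely because $t \ge 2$. Hence $\grad_{\mathsf{e}} f(p) = 0$, so $p$ is a critical point of $f$.

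Next I would compute the $2 \times 2$ principal submatrix of $\Hess_{\mathsf{e}} f(p)$ indexed by $\{1,2\}$. Since $f$ is of degree one in each of $x_1$ and $x_2$, we have $\partial^2 f/\partial x_1^2 = \partial^2 f/\partial x_2^2 = 0$ identically, whereas $\partial^2 f/\partial x_1 \partial x_2 = a \prod_{3\le l \le t} x_l \prod_{j>t} x_j^{2q_j}$ evaluates to $a$ at $p$ (the first product being empty, hence $1$, when $t=2$). Thus the submatrix in question is $\begin{bsmallmatrix} 0 & a \\ a & 0 \end{bsmallmatrix}$, whose determinant $-a^2$ is negative because $a \ne 0$.

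Finally, since every principal minor of a positive semidefinite matrix is nonnegative, the matrix $\Hess_{\mathsf{e}} f(p)$ fails to be positive semidefinite, contradicting Proposition~\ref{prop:necessary condition}; I therefore conclude $t \le 1$. The only steps demanding care are the verification that $p$ is a genuine critical point---this is exactly where the standing assumption $t \ge 2$ enters, guaranteeing that both $x_1$ and $x_2$ occur to the first power---and the observation that the nonzero choice of the remaining coordinates keeps the off-diagonal entry $\partial^2 f/\partial x_1 \partial x_2$ away from zero. Neither presents a real difficulty, so there is no substantial obstacle beyond this short verification; in particular, unlike the earlier monomial lemmas, no appeal to Lemma~\ref{key lemma} is needed, since evaluating at a critical point reduces the g-convex Hessian directly to $\Hess_{\mathsf{e}} f$.
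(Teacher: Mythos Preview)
Your proof is correct and follows essentially the same approach as the paper's: both argue by contradiction from $t\ge 2$, exhibit $(0,0,1,\dots,1)$ as a critical point, and show that the leading $2\times 2$ principal minor of $\Hess_{\mathsf{e}} f$ there equals $-a^2<0$, contradicting Proposition~\ref{prop:necessary condition}. Your write-up is in fact slightly more explicit about why $t\ge 2$ is needed for the point to be critical, which is a welcome clarification.
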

\begin{proof}
If $t\geq 2$,  then $(0,0,1,\dots,1)$ is a critical point of $f$ since 
\[
\grad_{\textsf{e}} f = \left( a\prod_{i=2}^t x_i \prod_{j=t+1}^n x_{j}^{2q_j},ax_1 \prod_{i=3}^t x_i\prod_{j=t+1}^n x_{j}^{2q_j},\dots,2q_nax_n^{2q_n-1} \prod_{i=1}^t x_i\prod_{j=t+1}^{n-1} x_{j}^{2q_j} \right).
\]
However,  the $2 \times 2$ leading principal minor of $\Hess_{\textsf{e}} f$ at $(0,0,1,\dots,1)$ is
\begin{align*}
\det\left( \begin{bmatrix}
        \frac{\partial^2 f}{\partial x_1^2}(0,0,1,\dots,1) & \frac{\partial^2 f}{\partial x_1\partial x_2}(0,0,1,\dots,1)  \\
        \frac{\partial^2 f}{\partial x_1\partial x_2}(0,0,1,\dots,1) & \frac{\partial^2 f}{\partial x_2^2}(0,0,1,\dots,1)
    \end{bmatrix} \right) 
    = a^2\det\left( \begin{bmatrix}
        0&1\\1&0
    \end{bmatrix} \right)  = -a^2 < 0.
\end{align*}
This contradicts Proposition~\ref{prop:necessary condition}.
\end{proof}

\begin{lemma}\label{case3_monomial}
Suppose $a\in \mathbb{R}\setminus \{0\}$ and $q_{1},\dots,  q_n$ are positive integers.  If the monomial $f=ax_1\prod_{j=2}^n x_{j}^{2q_j}$ is  g-convex with respect to some   connection,  then $n\leq 2$.
\end{lemma}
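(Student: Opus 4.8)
The plan is to argue by contradiction: assume $n\ge 3$ and that $f$ is g-convex with respect to some connection $\nabla$, and then exhibit points at which the $\nabla$-Hessian fails to be positive semidefinite, contradicting condition~\eqref{thm: g-convex:item3} of Theorem~\ref{thm: g-convex}. Let $\{\Gamma_{ij}^k\}$ be the Christoffel symbols of $\nabla$ and abbreviate $f_i\coloneqq \partial f/\partial x_i$ and $f_{ij}\coloneqq \partial^2 f/\partial x_i\partial x_j$. By Lemma~\ref{lem:Hessian translation}, the $(i,j)$ entry of $\Hess_\nabla f$ is $H_{ij}=f_{ij}-\sum_k\Gamma_{ij}^k f_k$, and g-convexity forces $(H_{ij})\succeq 0$ everywhere. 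Note that a pure critical-point argument in the spirit of Proposition~\ref{prop:necessary condition} is not enough here: when all exponents are $\ge 4$ the Euclidean Hessian vanishes identically on $C_f$, so I must use the pointwise semidefiniteness together with Lemma~\ref{key lemma}. The key idea is that although the $\Gamma_{ij}^k$ are unknown, each $f_k$ carries a high power of the variables, so on a suitable curve the Christoffel contributions become negligible to leading order and the obstruction visible in the Euclidean second derivatives survives.

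Concretely, I would restrict everything to the curve $x_1=1$, $x_2=x_3=s$, and $x_j=1$ for $4\le j\le n$; this is exactly where the hypothesis $n\ge 3$ enters, since I need two distinct even-power variables $x_2,x_3$. A direct computation along this curve gives $f_1=a\,s^{2q_2+2q_3}$, $f_2,f_3=O(s^{2q_2+2q_3-1})$, and $f_k=O(s^{2q_2+2q_3})$ for $k\ge 4$, whereas $f_{22},f_{33},f_{23}$ are all of order exactly $s^{2q_2+2q_3-2}$, with leading coefficients $a\cdot 2q_2(2q_2-1)$, $a\cdot 2q_3(2q_3-1)$, and $a\cdot 4q_2q_3$, respectively. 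Since each $\Gamma_{ij}^k$ is smooth, hence bounded near $s=0$, and every $f_k$ vanishes to order strictly larger than $2q_2+2q_3-2$, dividing by $s^{2q_2+2q_3-2}$ annihilates all Christoffel terms in the limit. Writing $H_{22},H_{33},H_{23}$ for the entries restricted to the curve, this yields
\[
\lim_{s\to 0}\frac{1}{s^{2q_2+2q_3-2}}\begin{bmatrix} H_{22} & H_{23}\\ H_{23} & H_{33}\end{bmatrix}
= a\begin{bmatrix} 2q_2(2q_2-1) & 4q_2q_3\\ 4q_2q_3 & 2q_3(2q_3-1)\end{bmatrix}\eqqcolon aM_0 .
\]

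Finally I would extract the contradiction from the $2\times 2$ principal minor $G(s)\coloneqq H_{22}H_{33}-H_{23}^2$, which is nonnegative because it is a principal minor of the positive semidefinite matrix $(H_{ij})$. By the previous step $G(s)=s^{2(2q_2+2q_3-2)}\Psi(s)$ for a smooth $\Psi$ with $\Psi(0)=a^2\det(M_0)=a^2\cdot 4q_2q_3\,(1-2q_2-2q_3)$; since $q_2,q_3\ge 1$ we have $1-2q_2-2q_3<0$, so $\Psi(0)<0$. On the other hand the exponent $2(2q_2+2q_3-2)$ is even and $G\ge 0$, so part~\eqref{key lemma:item1} of Lemma~\ref{key lemma}, applied in the single variable $s$, forces $\Psi(0)\ge 0$, a contradiction. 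Hence $n\le 2$. The only genuine work is the bookkeeping in the second paragraph — checking that every $f_k$ vanishes to order $>2q_2+2q_3-2$ along the chosen curve so that the arbitrary Christoffel symbols drop out, and verifying $\det(M_0)<0$ — both of which are routine. I expect the main obstacle to be precisely this isolation of the Euclidean second-order data from the unknown connection, which the diagonal slice $x_2=x_3=s$ is engineered to accomplish; note also that this device is unavailable when $n=2$, consistent with the stated bound being $n\le 2$ rather than anything smaller.
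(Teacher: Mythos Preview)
Your proof is correct. Both your argument and the paper's proceed by contradiction via the same $2\times 2$ principal minor obstruction, obtaining exactly the same decisive constant $4q_iq_j(1-2q_i-2q_j)<0$; the only difference is in how the restriction is made. The paper fixes $x_1=\cdots=x_{n-2}=1$, keeps $x_{n-1}$ and $x_n$ as two independent variables, and then has to run an iterated limit (first showing $\eta_3(0,x_n)=0$ via the odd case of Lemma~\ref{key lemma}, then sending $x_{n-1}\to 0$ followed by $x_n\to 0$). Your diagonal slice $x_2=x_3=s$ collapses this to a single variable, so the contradiction is read off directly from one even-order factorisation $G(s)=s^{2(2q_2+2q_3-2)}\Psi(s)$ together with part~\eqref{key lemma:item1} of Lemma~\ref{key lemma}. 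This is a genuine simplification: your route avoids the two-variable bookkeeping and the auxiliary vanishing of $\eta_3$, at the cost only of the (routine) observation that each $f_k$ restricted to the curve vanishes to order $\ge 2q_2+2q_3-1$ so that the quotients $H_{ij}/s^{2q_2+2q_3-2}$ extend smoothly across $s=0$.
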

\begin{proof}
Let $\nabla$ be the connection such that $f$ is  g-convex.  Assume on the contrary that $n\geq 3$.  We consider the determinant of the $2 \times 2$ lower right submatrix of $\Hess_{\nabla} f$: 
\[
        D \coloneqq \det\left( \begin{bmatrix}
            \frac{\partial^2 f}{\partial x_{n-1}^2}-\sum_{k=1}^n\frac{\partial f}{\partial x_k}\Gamma_{n-1,n-1}^{k}&\frac{\partial^2 f}{\partial x_{n-1}\partial x_n}-\sum_{k=1}^n\frac{\partial f}{\partial x_k}\Gamma_{n-1,n}^{k}\\
            \frac{\partial^2 f}{\partial x_{n-1}\partial x_n}-\sum_{k=1}^n\frac{\partial f}{\partial x_k}\Gamma_{n,n-1}^{k}&\frac{\partial^2 f}{\partial x_{n}^2}-\sum_{k=1}^n\frac{\partial f}{\partial x_k}\Gamma_{n,n}^{k}
        \end{bmatrix} \right).  
\]
By a direct calculation,  we obtain
\[
    D_0(x_{n-1},x_n) =a^2x_{n-1}^{4q_{n-1}-4}x_n^{4q_n-4}(\eta_1x_{n-1}^2x_{n}^2+\eta_2x_{n-1}x_{n}^4+\eta_3x_{n-1}x_n^4)
\]
for some $\eta_i\in \mathcal{C}^{\infty}(\mathbb{R}^2)$,  $i=1,2,3$ such that $\eta_1(0,0)=4q_nq_{n-1}(1-2q_n-2q_{n-1})<0$.  Denote $D_1 \coloneqq \eta_1 x_n^2x_{n-1}^2+\eta_2x_{n-1}^4x_n+\eta_3x_{n-1}x_n^4$.  Then it is clear that $D_1$ is a non-negative on $\mathbb{R}^2$.

For a fixed $x_{n} \neq 0$,   we view $D_1$ as a function of $x_{n-1}$ and write $D_1 = x_{n-1} (\eta_1 x_n^2  x_{n-1} + \eta_2 x_n  x_{n-1}^3 + \eta_3 x_n^4)$.  Lemma~\ref{key lemma} indicates that $\eta_3(0,x_n) =0$ for any fixed $x_n \neq 0$.  By continuity,  we conclude that $\eta_3(0,0)=0$ . For $(x_{n-1},  x_n) \in \mathbb{R}^2$  such that $x_{n-1}x_n\neq 0$,  we notice that 
\[
    \frac{D_1}{x_{n-1}^2x_n^2}=\eta_1+\frac{\eta_2}{x_{n}}x_{n-1}^2+\frac{\eta_3}{x_{n-1}}x_n^2.
\]
Thus for each $x_n \ne 0$,  we have 
\[
 \lim_{x_n\to 0}    \lim_{x_{n-1}\to 0}\frac{D_1}{x_{n-1}^2x_n^2} =  \lim_{x_n\to 0} \left( \eta_1(0,x_n)+x_n^2\cdot\frac{\partial \eta_3}{\partial x_{n-1}}(0,x_n) \right) = \eta_1(0,0) < 0.
\]
This contradicts to the fact that $D_1$ is a non-negative function. 
\end{proof}
As a consequence of Lemmas~\ref{case1_monomial}--\ref{case3_monomial},  we may characterize monomials that are  g-convex with respect some connection.

\begin{theorem}[g-convex monomials]\label{monomial}
Given a monomial $f=ax_1^{d_1}x_2^{d_2}\dots x_n^{d_n} $ where $a \neq 0$ and $d_1,\dots,  d_n$ are non-negative integers ,  the following are equivalent:
\begin{enumerate}[(a)]
\item $f$ is  g-convex with respect to some   connection.  \label{monomial:item1}
\item $f$ is convex.  \label{monomial:item2}
\item $f=ax_j^{d_j}$ for some $1 \le j \le n$,  where $a>0$ and $d_j$ is even. \label{monomial:item3}
\end{enumerate}
\end{theorem}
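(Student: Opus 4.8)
The plan is to prove the cyclic chain $(c)\Rightarrow(b)\Rightarrow(a)\Rightarrow(c)$, putting essentially all of the work into the last arrow, which is exactly what Lemmas~\ref{case1_monomial}--\ref{case3_monomial} were built for. The two short implications come first. For $(c)\Rightarrow(b)$: when $f=a x_j^{d_j}$ with $a>0$ and $d_j$ even, $f$ is a positive multiple of an even power of a single coordinate (constant in the remaining variables), hence convex in the ordinary sense. For $(b)\Rightarrow(a)$: a convex $f$ satisfies $\Hess_{\mathsf{e}} f\succeq 0$ everywhere, so Theorem~\ref{thm: g-convex}\eqref{thm: g-convex:item3} shows it is g-convex with respect to the Euclidean connection $\nabla_{\mathsf{e}}$, giving $(a)$.

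For $(a)\Rightarrow(c)$ I would proceed as follows. Using the reduction recorded just before Lemma~\ref{case1_monomial}, assume $d_i\ge 1$ for every $i$, so that each variable genuinely appears. Lemma~\ref{case1_monomial} then forces every exponent $\ge 2$ to be even, so each $d_i$ is either $1$ or even; Lemma~\ref{case2_monomial} forces at most one exponent to equal $1$. This leaves two regimes: the \emph{all-even} case $f=a\prod_{j}x_j^{2q_j}$, and the \emph{mixed} case $f=a x_1\prod_{j\ge 2}x_j^{2q_j}$ with a single first-power variable. In the mixed case Lemma~\ref{case3_monomial} bounds $n\le 2$, so only $a x_1 x_2^{2q_2}$ (and the degenerate affine $a x_1$, which I would treat as a boundary exception) survive; the all-even case must be cut down to a single variable.

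The hard part will be eliminating the genuinely multivariate survivors, namely the all-even monomials with $n\ge 2$ and the mixed monomial $a x_1 x_2^{2q_2}$. These are precisely the monomials that defeat the cheap test of Proposition~\ref{prop:necessary condition}: their Euclidean Hessian is positive semidefinite along the whole (non-discrete) critical locus, so no sign obstruction is visible at the level of $\Hess_{\mathsf{e}}f$. The real obstruction is connection-dependent, and I expect to extract it exactly as in the proofs of Lemmas~\ref{case2_monomial} and \ref{case3_monomial}: form the determinant of a $2\times 2$ block of $\Hess_{\nabla}f$ in two chosen variables, expand it near the origin, and apply the local-sign Lemma~\ref{key lemma} to the leading term, whose coefficient (of the shape $4q q'(1-2q-2q')$) is strictly negative and contradicts positive semidefiniteness. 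Once the analysis collapses $f$ to a single even-power variable $a x_j^{d_j}$, it remains to fix the sign of $a$: for $d_j=2$ this follows from Proposition~\ref{prop:necessary condition} at the critical point $x_j=0$, and for $d_j>2$, where $\Hess_{\mathsf{e}}f$ vanishes there, I would instead invoke the univariate classification Theorem~\ref{snc_for_dim1}\eqref{snc_for_dim1:item2} applied to $f'=a d_j x_j^{d_j-1}$, whose positive cofactor forces $a d_j>0$ and hence $a>0$. This produces $(c)$ and closes the cycle.
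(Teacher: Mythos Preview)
Your plan is the paper's plan: the trivial chain $(c)\Rightarrow(b)\Rightarrow(a)$, then Lemmas~\ref{case1_monomial}--\ref{case3_monomial} to whittle down the cases, then the $2\times 2$ principal minor of $\Hess_\nabla f$ combined with Lemma~\ref{key lemma} to dispose of the surviving two-variable candidates, and finally Theorem~\ref{snc_for_dim1} to pin down the sign of $a$.  In fact your case bookkeeping is sharper than the paper's.  The paper asserts that Lemmas~\ref{case1_monomial}--\ref{case3_monomial} already force $f$ into one of the two shapes $ax_ix_j^{d_j}$ or $ax_j^{d_j}$, and then writes out the determinant argument only for $ax_1x_2^{2p}$; but, as you correctly note, the three lemmas do \emph{not} eliminate the all-even multivariate case $a\prod_j x_j^{2q_j}$ with $n\ge 2$ (Lemma~\ref{case3_monomial} explicitly requires one exponent equal to $1$).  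Your proposal to treat that case by the same $2\times 2$ minor computation is the right fix, and your predicted leading coefficient $4qq'(1-2q-2q')<0$ is precisely what the expansion of $\det\Hess_\nabla(ax_i^{2q}x_j^{2q'})$ near the origin yields; the rest goes through as in the proof of Lemma~\ref{case3_monomial}.

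Your ``boundary exception'' $f=ax_1$ is a genuine defect of the theorem as stated rather than of your argument: a nonzero linear monomial is convex, hence g-convex, yet it does not satisfy (c).  The paper's proof has the same blind spot (its post-lemma dichotomy also excludes $ax_1$, and negative constants have the same problem).  You should not try to ``treat'' it; instead record that the equivalence tacitly assumes $\deg f\ge 2$, or amend (c) to allow $d_j\le 1$.
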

\begin{proof}
Implications that \eqref{monomial:item2} $\implies$ \eqref{monomial:item1} and \eqref{monomial:item3} $\implies$ \eqref{monomial:item2} are trivial.  Thus,  it is sufficient to prove \eqref{monomial:item1} $\implies$ \eqref{monomial:item3}.  Suppose that $f$ is  g-convex with respect to a connection $\nabla$ with the Christoffel symbols $\Gamma^k_{ij}$,  $1 \le i,  j ,  k \le n$.  Then Lemmas \ref{case1_monomial}--\ref{case3_monomial} indicate that either $f=ax_i x_j^{d_j}$ or $f = a x_j^{d_j}$ for some $a \ne 0$,  $1 \le i,  j \le n$ and positive even integer $d_j$.  First we want to prove that $f=ax_i x_j^{d_j}$ is not possible.  To achieve the goal,  we may,  without loss of generality,  assume that $n = 2$ and $f=ax_1 x_2^{2p}$,  $a \ne 0,  p \ge 1$. 

By a direct calculation,  we obtain  
\[
        \Hess_{\nabla} f = \begin{bsmallmatrix}
            -ax_2^{2p}\Gamma_{11}^1-2pax_1x_2^{2p-1}\Gamma_{11}^2&2pax_2^{2p-1}-ax_2^{2p}\Gamma_{12}^1-2pax_1x_2^{2p-1}\Gamma_{12}^2 \\
            2pax_2^{2p-1}-ax_2^{2p}\Gamma_{12}^1-2pax_1x_2^{2p-1}\Gamma_{12}^2&2p(2p-1)ax_1x_2^{2p-2}-ax_2^{2p}\Gamma_{22}^1-2pax_1x_2^{2p-1}\Gamma_{22}^2  
                    \end{bsmallmatrix}.  
\]
Theorem~\ref{thm: g-convex} implies that $\Hess_{\nabla} f \succeq 0$,  thus $\det( \Hess_\nabla f ) \ge 0$.  Since 
\scriptsize{\[
\det( H_\nabla(f)) =   a^2x_2^{4p-4}\left[ (x_2^2\Gamma_{11}^1+2px_1x_2\Gamma_{11}^2)((2p-4p^2)x_1+x_2^2\Gamma_{22}^1+2px_1x_2\Gamma_{22}^2)-(-2px_2+x_2^2\Gamma_{12}^1+2px_1x_2\Gamma_{12}^2)^2 \right],
\]}\normalsize

we may conclude that 
\[
B\coloneqq (x_2^2\Gamma_{11}^1+2px_1x_2\Gamma_{11}^2)((2p-4p^2)x_1+x_2^2\Gamma_{22}^1+2px_1x_2\Gamma_{22}^2)-(-2px_2+x_2^2\Gamma_{12}^1+2px_1x_2\Gamma_{12}^2)^2\geq 0.
\]
For any fixed $x_1 \neq 0$,  we regard $B$ as a function of $x_2$.  Then Lemma~\ref{key lemma} implies $\Gamma_{11}^2(x_1,0)=0$.  So $\lim_{x_2\to 0} \Gamma_{11}^2(x_1,x_2)/x_2 =( \partial \Gamma_{11}^2/\partial x_2 ) (x_1,0)$.
For $x_2\neq 0$,  we consider $B/x^2_2$.  Since $B$ is non-negative, and $p \ge 1$,  we obtain a contradiction: $0\leq \lim_{x_1 \to 0} \lim_{x_2\to 0}\frac{B(x_1,x_2)}{x_2^2}= -4p^2$.  Thus $f=ax_j^{d_j}$ for some even $d_j$.  Since $f$ is univariate and  geodesic,  we conclude that $a > 0$ by Theorem \ref{snc_for_dim1}.
\end{proof}

Given positive integers $n,d$ and a positive real number $r$,  we denote 
\begin{align*}
M_{n,d}(r) &\coloneqq \left\lbrace a\prod_{k=1}^n x_k^{d_k} \in \mathbb{R}[x_1,\dots,  x_n]_{\le d}: |a| \le r, \; 0 \le d_1,\dots, d_n \le n\;,\sum_{k=1}^n d_k\leq d \right\rbrace ,\\ 
N_{n,d} &\coloneqq \left\lbrace (d_1,\dots,d_n)\in \mathbb{Z}^n: 0 \le d_1,\dots, d_n \le n,  \; \sum_{k=1}^n d_k\leq d \right\rbrace.
\end{align*}
By definition $M_{n,d}(\infty) \coloneqq \bigcup_{r > 0} M_{n,d}(r)$ is the space of all monomials of degree at most $d$.  Clearly,  we have $M_{n,d}(r)\setminus \{0\} \cong \left(  [-r,r]\setminus \{0\} \right) \times N_{n,d}$.  Thus,  we may equip $M_{n,d}(r)$ with the uniform probability distribution.  
\begin{theorem}[Dimension and Density III]\label{thm:DD3}
For any positive integers $n,  d$,  we have 
\[
\dim (M_{n,d}(\infty) \cap A_{n,d}) = 1.
\]
Moreover,   for any positive real number $r$,  the probability that $f \in M_{n,d}(r)$ is  g-convex with respect to some connection is $\frac{n(\lfloor \frac{d}{2}\rfloor+1)}{2\binom{n+1+d}{n}}$.
\end{theorem}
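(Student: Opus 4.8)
The plan is to read off both assertions from Theorem~\ref{monomial}, which already does the analytic work: a monomial $a\,x_1^{d_1}\cdots x_n^{d_n}$ is g-convex with respect to some connection if and only if it has the form $a\,x_j^{e}$ with $a>0$ and $e$ even. Hence
\[
M_{n,d}(\infty)\cap A_{n,d}=\bigl\{\,a\,x_j^{e}\ :\ 1\le j\le n,\ e\text{ even},\ 0\le e\le d,\ a>0\,\bigr\},
\]
and the entire statement reduces to understanding this explicit set.

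For the dimension claim I would note that, for each admissible pair $(j,e)$, the slice $\{a\,x_j^{e}:a>0\}$ is a ray, i.e.\ homeomorphic to $(0,\infty)$ and therefore of topological dimension $1$. Since there are only finitely many admissible pairs $(j,e)$, the set $M_{n,d}(\infty)\cap A_{n,d}$ is a finite union of such rays. Each ray is an $F_\sigma$ subset of the ambient finite-dimensional vector space, so the finite sum theorem for covering dimension shows that the dimension of the union equals the maximum of the dimensions of its pieces, namely $1$; this gives $\dim\bigl(M_{n,d}(\infty)\cap A_{n,d}\bigr)=1$.

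For the density claim I would work through the product description $M_{n,d}(r)\setminus\{0\}\cong\bigl([-r,r]\setminus\{0\}\bigr)\times N_{n,d}$, under which sampling $f$ uniformly amounts to choosing an exponent vector uniformly from $N_{n,d}$ and, independently, a coefficient uniformly from $[-r,r]$. By the characterization, $f=a\,x^{(d_1,\dots,d_n)}$ lies in $A_{n,d}$ precisely when its exponent vector is supported on a single coordinate with an even value and $a>0$. Conditioning on the exponent vector, an admissible shape contributes the probability $\tfrac12$ that $a>0$ while an inadmissible one contributes $0$, so the desired probability is $\tfrac12$ times the fraction of admissible exponent vectors in $N_{n,d}$. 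Two elementary counts then finish the proof: the number of admissible exponent vectors, obtained by choosing one of the $n$ variables together with one of the $\lfloor d/2\rfloor+1$ even exponents $e$ with $0\le e\le d$, and the cardinality $|N_{n,d}|$, which a stars-and-bars argument identifies with the denominator $\binom{n+1+d}{n}$ in the statement. Assembling these yields $\dfrac{n(\lfloor d/2\rfloor+1)}{2\binom{n+1+d}{n}}$.

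The conceptual content is entirely supplied by Theorem~\ref{monomial}; the only genuine obstacle is the combinatorial bookkeeping in the density computation. I expect the delicate point to be the consistent treatment of the degenerate low-degree shapes — above all the constant shape $e=0$ — both in the enumeration of admissible exponent vectors and in the counting of $N_{n,d}$, since it is exactly this accounting that must be arranged so as to reproduce the stated closed form. The topological-dimension claim, by contrast, is immediate once the set is recognized as a finite union of rays.
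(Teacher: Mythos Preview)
Your proposal is correct and follows essentially the same route as the paper: both arguments invoke Theorem~\ref{monomial} to identify $M_{n,d}(\infty)\cap A_{n,d}$ explicitly as a finite collection of half-lines, read off the dimension from that description, and then compute the probability as the ratio of the count of admissible exponent vectors (times $\tfrac12$ for the sign of $a$) to $|N_{n,d}|$. Your write-up is in fact somewhat more careful than the paper's (you justify the dimension of a finite union via the sum theorem and you flag the bookkeeping around the constant shape $e=0$), but the underlying argument is the same.
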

\begin{proof}
By Theorem \ref{monomial},  we have 
\[
A_{n,d} \cap ( M_{n,d}(r)\setminus \{0\} )  = \left\lbrace ax_{i}^{2j}:  1 \le i\le n,\; 0 \le j \le  \left\lfloor \frac{d}{2} \right\rfloor, r \ge a > 0\right\rbrace \simeq (0,r] \times \left\lbrace 0 ,1,\dots, \left\lfloor \frac{d}{2} \right\rfloor\right\rbrace^n.
\]
In particular,  $A_{n,d} \cap M_{n,d}(\infty)$ has dimension $1$.  Since $|N_{n,d}| = \binom{n+1+d}{n}$,  we obtain that 
\[
\frac{\Vol(A_{n,d} \cap M_{n,d}(r))}{\Vol(M_{n,d}(r))} = \frac{n(\lfloor \frac{d}{2}\rfloor+1)}{2\binom{n+1+d}{n}}.
\]
\end{proof}

\section{g-convex additively separable functions}
A function $f\in \mathcal{C}^{\infty} (\mathbb{R}^n)$ is \emph{additively separable} if there exists a partition of variables
\[
\{x_1,\dots,x_n\} = \bigsqcup_{t=1}^s P_t
\]
such that $f=\sum_{t = 1}^s  f_t$ for some $f_t \in  \mathcal{C}^{\infty}(\mathbb{R}^{P_t})$,  $1 \le t \le s$.   Here $ \mathcal{C}^{\infty}(\mathbb{R}^{P_t})$ denotes the space of smooth functions whose variables are contained in the subset $P \subseteq \{x_1,\dots,  x_n\}$.  By permuting variables,  we may assume that for each $1 \le t \le s$,  $P_t =\{x_l:  l\in I_t\}$ where 
\begin{equation}\label{eq:partition}
I_t \coloneqq \{p_t+1,p_t+2,\dots,p_{t}+n_t \} \subseteq \mathbb{N},\quad n_t = \# P_t,\quad 0= p_1<p_2 < \dots < p_s < n. 
\end{equation}
Therefore,  it is sufficient to consider additively separable polynomials of such type.

Let $\{1,\dots,  n\} = \bigsqcup_{t=1}^s I_t$ be a partition as in \eqref{eq:partition}.  Suppose that $\mathbb{R}^n = \bigoplus_{t=1}^s \mathbb{R}^{n_t}$ is the decomposition of $\mathbb{R}^n$ according to the partition.  For any pseudo-Riemannian metric $\mathsf{g}_t$ on $\mathbb{R}^{n_t}$,  $1 \le t \le s$,  we denote by $\mathsf{g} \coloneqq \bigoplus_{t=1}^s \mathsf{g}_t$ the induced pseudo-Riemannian metric on $\mathbb{R}^n$.
\begin{lemma}\label{lem:ASpoly}
Let $\Gamma_{ij}^k$ be the Christorff symbols of $\mathsf{g} $.  Then $\Gamma_{ij}^k=0$ if $i,j,k$ are pairwise not in the same $I_t$ for any $1 \le t \le s$.
\end{lemma}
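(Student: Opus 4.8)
The plan is to exploit the block-diagonal structure of the metric $\mathsf{g} = \bigoplus_{t=1}^s \mathsf{g}_t$ and push it through the formula $\partial_k \mathsf{g}_{ij} = \sum_l \Gamma_{ik}^l \mathsf{g}_{jl} + \sum_l \Gamma_{jk}^l \mathsf{g}_{il}$ from \eqref{eq_LC_connection}. The two facts I need about $\mathsf{g}$ are: (1) $\mathsf{g}_{ij} = 0$ whenever $i$ and $j$ lie in different blocks $I_s \neq I_t$, since the metric is a direct sum; and (2) each entry $\mathsf{g}_{ij}$ with $i,j \in I_t$ depends only on the variables $\{x_l : l \in I_t\}$, so $\partial \mathsf{g}_{ij}/\partial x_k = 0$ whenever $k \notin I_t$. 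Both follow directly from the definition of the induced metric. The inverse matrix $(\mathsf{g}^{ij})$ inherits the same block structure, so $\mathsf{g}^{kl} = 0$ whenever $k,l$ lie in different blocks.

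First I would rewrite the Christoffel symbols explicitly via the standard inversion of \eqref{eq_LC_connection}, namely $\Gamma_{ij}^k = \tfrac{1}{2}\sum_{l} \mathsf{g}^{kl}(\partial_i \mathsf{g}_{jl} + \partial_j \mathsf{g}_{il} - \partial_l \mathsf{g}_{ij})$, which holds for a Levi-Civita connection. The claim concerns indices $i,j,k$ that are \emph{pairwise} in distinct blocks, so say $i \in I_a$, $j \in I_b$, $k \in I_c$ with $a,b,c$ distinct. Because of the block structure of $\mathsf{g}^{kl}$, only terms with $l \in I_c$ survive the sum over $l$. For each such $l \in I_c$ I would examine the three derivative terms $\partial_i \mathsf{g}_{jl}$, $\partial_j \mathsf{g}_{il}$, and $\partial_l \mathsf{g}_{ij}$ and show each vanishes: the entry $\mathsf{g}_{jl}$ has $j \in I_b$ and $l \in I_c$ with $b \neq c$, hence $\mathsf{g}_{jl} = 0$ identically by fact (1), so its derivative vanishes; likewise $\mathsf{g}_{il} = 0$ since $i \in I_a$, $l \in I_c$, $a \neq c$; and $\mathsf{g}_{ij} = 0$ since $i \in I_a$, $j \in I_b$, $a \neq b$. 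Thus every surviving summand is a derivative of the zero function, giving $\Gamma_{ij}^k = 0$.

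The argument is essentially a bookkeeping exercise, so I do not anticipate a serious obstacle; the only thing to be careful about is that the hypothesis is "pairwise not in the same block," which is exactly what forces \emph{all three} of $\mathsf{g}_{jl}, \mathsf{g}_{il}, \mathsf{g}_{ij}$ to be off-diagonal (cross-block) entries and hence zero. Were only two of the three indices in distinct blocks, some term could survive, so the pairwise-distinctness is genuinely used. I would note in passing that one does not even need fact (2) here—the vanishing comes purely from the off-block entries of $\mathsf{g}$ being zero—but fact (2) will matter for the broader structural decomposition this lemma feeds into, so I would keep both observations available. A clean write-up would state the Koszul-type formula, fix $i\in I_a, j \in I_b, k \in I_c$ with $a,b,c$ distinct, and then dispatch each of the three terms in a single line.
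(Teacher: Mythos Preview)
Your proposal is correct and is essentially identical to the paper's proof: both use the Koszul formula $\Gamma_{ij}^k = \tfrac{1}{2}\sum_l \mathsf{g}^{kl}(\partial_i \mathsf{g}_{jl} + \partial_j \mathsf{g}_{il} - \partial_l \mathsf{g}_{ij})$, restrict the sum to $l$ in the same block as $k$ via the block structure of $(\mathsf{g}^{kl})$, and then observe that each of $\mathsf{g}_{jl}, \mathsf{g}_{il}, \mathsf{g}_{ij}$ is an off-block entry and hence identically zero. Your remark that fact~(2) is not actually needed here is also accurate---the paper likewise only uses the vanishing of off-block metric entries.
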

\begin{proof}
By definition,  we have 
\[
         \Gamma_{ij}^k=\sum_{v=1}^n \frac{1}{2}g^{kv} \left( \frac{\partial g_{iv}}{\partial x_j}+\frac{\partial g_{jv}}{\partial x_i}-\frac{\partial g_{ij}}{\partial x_v} \right) \quad 1\le  i,j,k \le n.
\]
where the matrix $(g^{ij})_{i,j=1}^n$ is the inverse of $(g_{ij})_{i,j=1}^n$.  Since $\mathsf{g}  = \bigoplus_{t=1}^s \mathsf{g}_t$,  $(g_{ij})_{i,j=1}^n = \diag(G_{1},\dots,G_{s})$ for some $G_{t}\in \mathbb{R}^{n_t\times n_t}$,  $1 \le t \le s$.  Suppose $k\in I_t$.  Then $i,  j \not\in I_t$ and $g_{ij}=g_{iv}=g_{jv}= g^{ku}= 0$ for any $v\in I_t$ and  $u\not\in I_t$.  Thus,  we obtain
\[
\Gamma_{ij}^k=\sum_{v\in I_t}\dfrac{1}{2}g^{kv}\left( \frac{\partial g_{iv}}{\partial x_j}+\frac{\partial g_{jv}}{\partial x_i} \right)=0.
\]
\end{proof}

\begin{theorem}[g-convex additively separable functions]\label{snc_for_seperable_case}
An additively separable function $f = \sum_{t=1}^s f_t \in \mathcal{C}^{\infty}(\mathbb{R}^n)$ is  g-convex with respect to some   connection if and only if one of the following holds:
       \begin{enumerate}[(a)]
           \item $f$ has no critical point.\label{snc_for_seperable_case:item1}
           \item For each $1 \le t \le s$,  $f_t$  is  g-convex with respect to some  connection. \label{snc_for_seperable_case:item2}
       \end{enumerate}
Moreover,  if for each $1 \le t \le s$,  $f_t$ is g-convex with respect to some pseudo-Riemmanian metric,  then so is $f$.
   \end{theorem}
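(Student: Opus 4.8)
The plan is to prove the three parts separately, starting with the easier direction and using the sufficient conditions from Section~\ref{sec:criteria}. For the moreover clause, I would first establish that a suitable direct-sum construction of connections does the job. Let me outline each step.

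\textbf{Step 1: the ``if'' direction.} Suppose \eqref{snc_for_seperable_case:item1} holds, i.e.\ $f$ has no critical point. Then Proposition~\ref{thm:sufficient0} immediately produces a connection with respect to which $f$ is g-convex (indeed, one can realize any prescribed Hessian, in particular a positive semidefinite one). Now suppose instead that \eqref{snc_for_seperable_case:item2} holds. The key observation is that g-convexity is preserved under the direct-sum construction: for each $t$ choose a connection $\nabla^{(t)}$ on $\mathbb{R}^{n_t}$ with respect to which $f_t$ is g-convex, so $\Hess_{\nabla^{(t)}} f_t \succeq 0$ by Theorem~\ref{thm: g-convex}. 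I would assemble a block-diagonal connection $\nabla$ on $\mathbb{R}^n = \bigoplus_{t=1}^s \mathbb{R}^{n_t}$ by declaring its Christoffel symbols to be those of $\nabla^{(t)}$ when $i,j,k$ all lie in the same block $I_t$ and zero otherwise. Because $f = \sum_t f_t$ with $f_t$ depending only on the variables in $I_t$, the mixed second partials $\partial^2 f/\partial x_i \partial x_j$ vanish across blocks, and by Lemma~\ref{lem:Hessian translation} the Hessian $\Hess_{\nabla} f$ is block-diagonal with $t$-th block equal to $\Hess_{\nabla^{(t)}} f_t \succeq 0$. Hence $\Hess_{\nabla} f \succeq 0$ everywhere and $f$ is g-convex by Theorem~\ref{thm: g-convex}.

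\textbf{Step 2: the ``only if'' direction.} Assume $f$ is g-convex with respect to some connection but \eqref{snc_for_seperable_case:item1} fails, so $f$ has a critical point $p = (p_1,\dots,p_s)$ with $p_t \in \mathbb{R}^{n_t}$. Because $\grad_{\textsf{e}} f = \bigoplus_t \grad_{\textsf{e}} f_t$ (the gradient splits across blocks), $p$ being critical for $f$ forces each $p_t$ to be critical for $f_t$. I must then show each $f_t$ is g-convex with respect to some connection. The natural route is to fix $t$ and show $f_t$ satisfies one of the two sufficient conditions of Section~\ref{sec:criteria}; if $f_t$ has no critical point we are done by Proposition~\ref{thm:sufficient0}, otherwise I would try to verify the hypothesis of Proposition~\ref{sufficient1}, namely that $\Hess_{\mathsf{g}_t} f_t \succeq 0$ in a neighborhood of each critical point of $f_t$. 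The mechanism should be that the global g-convexity of $f$, restricted to the affine slice through $p$ in the $t$-th block, transfers positive semidefiniteness of the full Hessian down to the $t$-th diagonal block; here I would invoke Proposition~\ref{prop:necessary condition} applied to the restriction, exploiting that $p_t$ is critical for $f_t$ so the correction terms involving Christoffel symbols drop out.

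\textbf{Step 3: the moreover clause.} If each $f_t$ is g-convex with respect to a \emph{pseudo-Riemannian metric} $\mathsf{g}_t$, then form $\mathsf{g} = \bigoplus_{t=1}^s \mathsf{g}_t$. By Lemma~\ref{lem:ASpoly} its Levi-Civita connection is block-diagonal, its Christoffel symbols vanish across blocks, and so $\Hess_{\mathsf{g}} f$ is again block-diagonal with blocks $\Hess_{\mathsf{g}_t} f_t \succeq 0$; thus $f$ is g-convex with respect to the genuine metric $\mathsf{g}$. \textbf{The main obstacle} I anticipate is Step~2: the converse must correctly handle the case where $f$ does have a critical point but one block $f_t$ is, on its own, neither free of critical points nor obviously satisfying the neighborhood-positivity hypothesis of Proposition~\ref{sufficient1}. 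The subtle point is propagating local positive semidefiniteness of the diagonal blocks of $\Hess_{\nabla} f$ from the fact that the whole matrix is positive semidefinite, and ensuring that this gives g-convexity of $f_t$ rather than merely a pointwise Hessian condition (recall Example~\ref{ex:necessary} shows pointwise positivity at critical points alone is insufficient). Reconciling this likely requires arguing that a critical point of $f_t$ lifts to critical points of $f$ on the relevant slice and then using that g-convexity of $f$ along geodesics contained in that slice forces convexity of $f_t$ directly.
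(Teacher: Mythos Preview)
Your Steps~1 and~3 are correct and match the paper. The gap is exactly where you flag it, in Step~2. Your first proposed route, through Proposition~\ref{sufficient1}, does not work: g-convexity of $f$ with respect to an arbitrary $\nabla$ gives no control over the \emph{Euclidean} Hessian of $f_t$ away from critical points, so the neighbourhood hypothesis of that proposition cannot be verified in general. Your fallback, restricting to geodesics of $\nabla$ lying in an affine slice, also fails since geodesics of $\nabla$ need not stay in the slice.

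The idea you are missing is that the $t$-th diagonal block of $\Hess_\nabla f$, after freezing the other variables at a critical point, is \emph{itself} the Hessian of $f_t$ with respect to a genuine connection on $\mathbb{R}^{n_t}$. Concretely: if $u=(u^1,\dots,u^s)$ is a critical point of $f$ (so each $u^t$ is critical for $f_t$, as you note), evaluate $\Hess_\nabla f$ at $(x^1,u^2,\dots,u^s)$. Since $\partial f/\partial x_k$ vanishes for every $k\in I_t$ with $t\ge 2$, the only surviving correction terms are those with $k\in I_1$, and the upper-left $n_1\times n_1$ block reads
\[
\Hess_{\mathsf{e}} f_1(x^1)\;-\;\sum_{k\in I_1}\frac{\partial f_1}{\partial x_k}(x^1)\,\bigl(\Gamma^k_{ij}(x^1,u^2,\dots,u^s)\bigr)_{i,j\in I_1}\;\succeq\;0\qquad\text{for all }x^1\in\mathbb{R}^{n_1}.
\]
By Lemma~\ref{lem:Hessian translation} this is precisely $\Hess_{\nabla'}f_1$ for the connection $\nabla'$ on $\mathbb{R}^{n_1}$ with Christoffel symbols $\Gamma^k_{ij}(\,\cdot\,,u^2,\dots,u^s)$, $i,j,k\in I_1$. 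Hence $f_1$ is g-convex with respect to $\nabla'$, with no appeal to Proposition~\ref{thm:sufficient0} or~\ref{sufficient1} needed. The point is not ``pointwise positivity at critical points'' (your worry via Example~\ref{ex:necessary}) but \emph{global} positivity in $x^1$ of an expression that is literally a Hessian for a frozen connection.
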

\begin{proof}
The last assertion follows immediately from Lemma~\ref{lem:ASpoly}. Without loss of generality,  we suppose that the partition of variables for $f = \sum_{t=1}^s f_t$ is given as in \eqref{eq:partition}.  Assume that $f$ is  g-convex with respect to a connection $\nabla$ with Christorff symbols $\Gamma_{ij}^k$ and $f$ has a critical point $u=(u_1,\dots,u_n)\in\mathbb{R}^n$.  Since 
\[
\grad_{\textsf{e}} f= ( \grad_{\textsf{e}} f_1,\dots,  \grad_{\textsf{e}}f_s),
\]
we have $\grad_{\mathsf{e}} f_t (u^t)=0$,  where $u^t \coloneqq (u_{p_t+1},\dots,u_{p_t+n_t})$,  $1 \le t \le s$.
    
By Lemma~\ref{lem:Hessian translation} and Theorem~\ref{thm: g-convex},  we have
\[
\Hess_\nabla f = \diag(\Hess_{\mathsf{e}} f_1(x^1),\dots,  \Hess_{\mathsf{e}} f_t(x^t) )  - \sum_{k=1}^n \frac{\partial f}{\partial x_k}B^k(x) \succeq 0,
\] 
where for each $1 \le t \le s$ and $1 \le k \le n$,  $x^{t} \coloneqq (x_{p_t+1},\dots,x_{p_t+n_t})$ and $B^k \coloneqq (\Gamma_{ij}^k)_{i,j=1}^n$.  In particular,  the function 
\begin{align*}
H(x_1,\dots,  x_{n_1}) &\coloneqq \Hess_{\nabla} f (x_1,\dots,  x_{n_1} ,  u_2,\dots,u_n) \\
&= \diag(\Hess_{\mathsf{e}} f_1(x^1), \Hess_{\mathsf{e}} f_2(u^2), \dots,  \Hess_{\mathsf{e}} f_t(u^t) ) - \sum_{k=1}^{n_1} \frac{\partial f_1}{\partial x_k} B^k(x^1,u^2,\dots, u^t)
\end{align*}
is an $n\times n$ positive semi-definite matrix for all $(x_1,\dots, x
_{n_1}) \in \mathbb{R}^{n_1}$.  Let $H_{n_1}$ be the upper left $n_1 \times n_1$ submatrix of $H$.  Then 
\[
0 \preceq H_{n_1} = \Hess_{\mathsf{e}} f_1 - \sum_{k = 1}^{n_1}\frac{\partial f_1}{\partial x_k} B^k_{n_1},
\]
where $B^k_{n_1}$ is the upper left $n_1 \times n_1$ submatrix of $B^k$,  $1 \le k \le n_1$.  By construction of $B^k_{n_1}$,  this implies that $f_1$ is  g-convex with respect to some connection on $\mathbb{R}^{n_1}$.   Similarly,  we may prove that $f_2,\dots,  f_s$ are  g-convex with respect to some  connection,  respectively. 

Conversely,  if \eqref{snc_for_seperable_case:item1} holds,  then Proposition~\ref{thm:sufficient0} ensures that $f$ is  g-convex with respect to some connection.  Suppose that \eqref{snc_for_seperable_case:item2} holds.  According to Theorem~\ref{thm: g-convex},  for each $1 \le t \le s$,  there exist Christorff symbols $\widetilde{\Gamma}_{ij}^k\in \mathcal{C}^{\infty}(\mathbb{R}^{n_t})$ where $i,j,k\in I_t$ such that for any $y\in \mathbb{R}^{n_t}$, 
    \begin{equation}
H_t(y) \coloneqq \Hess_{\mathsf{e}} f_t (y) - \sum_{v=1}^{n_t}\frac{\partial f_t}{\partial y_v}(y) \widetilde{B}^{v} \succeq 0,
    \end{equation}
   where $\widetilde{B}^v \coloneqq (\widetilde{\Gamma}_{ij}^{v + p_t}), i,j\in I_t,   1\le  v \le n_t$.  We define 
\[
        \Gamma_{ij}^k(x^1,\dots,x^s) \coloneqq \begin{cases}
            \widetilde{\Gamma}_{ij}^k(x^t) &\quad i,j,k\in I_t,\; 1\le t \le s \\
            0&\quad \text{otherwise}
        \end{cases}
\] 
where $x^{t} \coloneqq (x_{p_t+1},\dots,x_{p_t+n_t})$,  $1 \le t \le s$.  Let $\nabla$ be the  connection defined by $\Gamma_{ij}^k$'s.  Then clearly we have 
\[
\hess_{\nabla} f (x^1,\dots,x^s)= \diag(H_1(x^1),\dots, H_s(x^s)) \succeq 0
\]
and this completes the proof.
\end{proof}
\begin{remark}
We have the following observations about Theorem~\ref{snc_for_seperable_case}.
\begin{itemize}
\item[$\diamond$] For an additively separable function $f = \sum_{t=1}^s f_t$,  it is clear that
\[
    \Hess_{\textsf{e}} f (x^1,\dots,x^s) = \diag(\Hess_{\textsf{e}} f_1 (x^1),\dots, \Hess_{\textsf{e}}  f_s (x^s)).
\]
Therefore,  $f$ is convex if and only if each $f_t$ is g-convex.  However,  Theorem~\ref{snc_for_seperable_case} indicates that for an  g-convex additively separable function,  either all its summands are also  g-convex,  or the function has no critical point.  Example~\ref{projection of the higher dimension} below shows that the latter situation indeed occurs,  in which case summands are not necessarily  g-convex.

\item[$\diamond$] In general,  for each $1 \le t \le s$,  $\mathbb{R}^{n_t}$ is a totally geodesic submanifold of $\mathbb{R}^n$ equipped with the Euclidean metric.  This ensures the equivalence between the g-convexity of an additively separable function and that of its summands.  However,  if we equip $\mathbb{R}^n$ with an arbitrary Riemannian metric,  then $\mathbb{R}^{n_t}$ is not necessarily a totally geodesic submanifold anymore and this results in \eqref{snc_for_seperable_case:item1} of Theorem~\ref{snc_for_seperable_case}. 


\item[$\diamond$] Theorem~\ref{snc_for_seperable_case} holds for any smooth additively separable function.  Moreover,  condition~\eqref{snc_for_seperable_case:item1} is clearly equivalent to that  for some $1 \le t \le s$,  $f_t$ has no critical point.
\end{itemize}
\end{remark}

\begin{example}\label{projection of the higher dimension}
We consider $f = x_1^3+x_2 \in \mathbb{R}[x_1,x_2]$.  Then $\grad_{\mathsf{e}} f = (3x_1^2,1)^\tp$ which is nonzero for all $(x_1,x_2) \in\mathbb{R}^2$.  Thus by Proposition~\ref{thm:sufficient0},  $f$ is  g-convex with respect to some connection.  On the other side,  $f$ is additively separable since $f(x_1,x_2)=f_1(x_1)+f_2(x_2)$ where $f_{1}(x_1)=x_1^3,f_{2}(x_{2})=x_2$.  According to Theorem~\ref{snc_for_dim1},  $f_1$ is not  g-convex with respect to any  connection.
\end{example}
As an application,  we apply Theorem~\ref{snc_for_seperable_case} to additively separable polynomials.  Given positive integers $n$ and $d$ and a positive real number $r$,  we denote 
\[
S_{n,d}(r) \coloneqq \left\lbrace \sum_{j=1}^n f_j: f_j = \sum_{k=0}^d a_{jk} x_j^k \in \mathbb{R}[x_j]_{\le d},  \;  |a_{jk}| \le r,  \;  1 \le j \le n, \;  1 \le k \le d
\right\rbrace.
\]
We recall from \eqref{eq:Pd} that 
\[
\left\lbrace \sum_{k=0}^d a_{jk} x_j^k \in \mathbb{R}[x_j]_{\le d},  \;  |a_{jk}| \le r,  \; 1 \le k \le d
\right\rbrace \simeq [-r,r]^{d+1}
\] for each $1 \le j \le n$.  This leads to $S_{n,d}(r) \simeq [-r,r]^{dn} \times [-nr,nr]$.  Moreover,  $S_{n,d}(\infty) \coloneqq \bigcup_{r > 0} S_{n,d}(r)$ consists of additively separable polynomials of degree at most $d$.
\begin{theorem}[Dimension and density IV]\label{snc_for_seperable_case_univariate}
For any positive integers $n$ and $d$,  $\dim (S_{n,d}(\infty) \cap A_{n,d}) = nd+1$.  Moreover,  for each $r > 0$,  if we equip $S_{n,d}(r)$ with the uniform probability distribution,  then the probability that $f \in S_{n,d}(r)$ is  g-convex with respect to some  connection is $O(1/\log d)$.
\end{theorem}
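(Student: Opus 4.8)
The plan is to reduce both assertions to the univariate results of the previous section by way of the structural Theorem~\ref{snc_for_seperable_case}. Write $f=\sum_{j=1}^n f_j$ with $f_j\in\mathbb{R}[x_j]_{\le d}$, and recall that under $S_{n,d}(r)\simeq[-r,r]^{dn}\times[-nr,nr]$ the $dn$ non-constant coefficients $a_{jk}$ ($1\le j\le n$, $1\le k\le d$) are uniform on $[-r,r]$, while the g-convexity of $f$ and of each $f_j$ is unaffected by the constant term. Since $\grad_{\textsf{e}} f=(f_1'(x_1),\dots,f_n'(x_n))$, the polynomial $f$ has a critical point if and only if every $f_j'$ has a real root; equivalently, $f$ has no critical point if and only if some $f_j$ has no critical point. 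Hence, by Theorem~\ref{snc_for_seperable_case}, $f$ is g-convex with respect to some connection if and only if either some $f_j$ has no critical point, or every $f_j$ is g-convex.

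For the dimension I would establish the lower bound $\dim(S_{n,d}(\infty)\cap A_{n,d})\ge nd+1$ using the second alternative. By Proposition~\ref{dim_of_DCP} we have $\dim A_{1,d}=d+1$; in fact its proof produces a nonempty open subset $W\subseteq\mathbb{R}[x]_{\le d-1}$ with $W\subseteq D_{d-1}$, and since adding a constant preserves g-convexity, $\partial^{-1}(W)$ is a nonempty open subset of $\mathbb{R}[x]_{\le d}$ contained in $A_{1,d}$. Imposing $f_j'\in W$ for every $j$ constrains only the non-constant coefficients of each summand, so the corresponding subset of $S_{n,d}(\infty)$ is open, nonempty, and contained in $A_{n,d}$ because every summand is then g-convex. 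Consequently $\dim(S_{n,d}(\infty)\cap A_{n,d})\ge\dim S_{n,d}(\infty)=nd+1$, and as the reverse inequality is trivial we obtain equality.

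For the density the key point is the event inclusion
\[
\{f\text{ is g-convex}\}\subseteq\bigcup_{j=1}^n\{f_j\text{ is g-convex}\}.
\]
Indeed, if some $f_j$ has no critical point then that $f_j$ is already g-convex by Theorem~\ref{snc_for_dim1}, while if every $f_j$ is g-convex the inclusion is immediate; so in either alternative some summand is g-convex. Because the g-convexity of $f_j$ depends only on $f_j'$, hence only on $(a_{j1},\dots,a_{jd})$, the law of $f_j$ relevant here coincides with that of a uniform element of $P_d(r)$, and Theorem~\ref{prob_dim_1} gives $\mathbb{P}(f_j\text{ is g-convex})=O(1/\log d)$ uniformly in $j$. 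A union bound over the fixed number $n$ of summands then yields $\mathbb{P}(f\text{ is g-convex})\le\sum_{j=1}^n\mathbb{P}(f_j\text{ is g-convex})=O(1/\log d)$.

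The step I would check most carefully is the compatibility of distributions: one must confirm that marginalizing the uniform law on $S_{n,d}(r)$ onto the non-constant coefficients of a single summand $f_j$ reproduces exactly the uniform law on $[-r,r]^d$ that governs g-convexity in $P_d(r)$, despite the many-to-one identification $\sum_j a_{j0}\mapsto c$ of the constant terms. No independence across the summands is needed, since only a union bound is used, and because $n$ is held fixed while $d\to\infty$ the factor $n$ is absorbed into the $O(1/\log d)$.
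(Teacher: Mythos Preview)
Your proposal is correct and follows the same route as the paper, which simply records the result as a direct consequence of Theorem~\ref{snc_for_seperable_case} and Theorem~\ref{prob_dim_1}. Your write-up merely unpacks that one-liner: for the dimension you invoke Proposition~\ref{dim_of_DCP} to obtain a full-dimensional open set in each factor, and for the density you make explicit the event inclusion $\{f\text{ g-convex}\}\subseteq\bigcup_j\{f_j\text{ g-convex}\}$ (using Theorem~\ref{snc_for_dim1} to absorb the ``no critical point'' alternative) followed by a union bound; both are exactly the steps the paper's terse proof leaves implicit.
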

\begin{proof}
This is a direct consequence of Theorems~\ref{snc_for_seperable_case} and \ref{prob_dim_1}.
\end{proof}

\bibliographystyle{plain}
\bibliography{ref}

\begin{thebibliography}{10}

\bibitem{Ab2007}
P.-A. Absil, R.~Mahony, and R.~Sepulchre.
\newblock {\em Optimization algorithms on matrix manifolds}.
\newblock Princeton University Press, Princeton, NJ, 2008.
\newblock With a foreword by Paul Van Dooren.

\bibitem{Alekseevski68}
D.~V. Alekseevski\u{\i}.
\newblock Riemannian spaces with unusual holonomy groups.
\newblock {\em Funkcional. Anal. i Prilo\v{z}en.}, 2(2):1--10, 1968.

\bibitem{AD14}
Mich\`ele Audin and Mihai Damian.
\newblock {\em Morse theory and {F}loer homology}.
\newblock Universitext. Springer, London; EDP Sciences, Les Ulis, 2014.
\newblock Translated from the 2010 French original by Reinie Ern\'{e}.

\bibitem{Bavcak14}
Miroslav Ba\v~c\'ak.
\newblock {\em Convex analysis and optimization in {H}adamard spaces},
  volume~22 of {\em De Gruyter Series in Nonlinear Analysis and Applications}.
\newblock De Gruyter, Berlin, 2014.

\bibitem{BO69}
R.~L. Bishop and B.~O'Neill.
\newblock Manifolds of negative curvature.
\newblock {\em Trans. Amer. Math. Soc.}, 145:1--49, 1969.

\bibitem{BN2023}
Nicolas Boumal.
\newblock {\em An introduction to optimization on smooth manifolds}.
\newblock Cambridge University Press, Cambridge, 2023.

\bibitem{boyd2004}
Stephen Boyd and Lieven Vandenberghe.
\newblock {\em Convex optimization}.
\newblock Cambridge University Press, Cambridge, 2004.

\bibitem{Bryant87}
Robert~L. Bryant.
\newblock Metrics with exceptional holonomy.
\newblock {\em Ann. of Math. (2)}, 126(3):525--576, 1987.

\bibitem{BFGOWW19}
Peter B\"{u}rgisser, Cole Franks, Ankit Garg, Rafael Oliveira, Michael Walter,
  and Avi Wigderson.
\newblock Towards a theory of non-commutative optimization: Geodesic 1st and
  2nd order methods for moment maps and polytopes.
\newblock In {\em 2019 IEEE 60th Annual Symposium on Foundations of Computer
  Science (FOCS)}, pages 845--861, 2019.

\bibitem{CLMW11}
Emmanuel~J. Cand\`es, Xiaodong Li, Yi~Ma, and John Wright.
\newblock Robust principal component analysis?
\newblock {\em J. ACM}, 58(3):Art. 11, 37, 2011.

\bibitem{CG72}
Jeff Cheeger and Detlef Gromoll.
\newblock On the structure of complete manifolds of nonnegative curvature.
\newblock {\em Ann. of Math. (2)}, 96:413--443, 1972.

\bibitem{CLS11}
David~A. Cox, John~B. Little, and Henry~K. Schenck.
\newblock {\em Toric varieties}, volume 124 of {\em Graduate Studies in
  Mathematics}.
\newblock American Mathematical Society, Providence, RI, 2011.

\bibitem{DC1976}
Manfredo~P. do~Carmo.
\newblock {\em Differential geometry of curves and surfaces}.
\newblock Prentice-Hall, Inc., Englewood Cliffs, NJ, 1976.
\newblock Translated from the Portuguese.

\bibitem{EAS99}
Alan Edelman, Tom\'as~A. Arias, and Steven~T. Smith.
\newblock The geometry of algorithms with orthogonality constraints.
\newblock {\em SIAM J. Matrix Anal. Appl.}, 20(2):303--353, 1999.

\bibitem{Edwards85}
Allen~L. Edwards.
\newblock {\em Multiple regression and the analysis of variance and
  covariance}.
\newblock A Series of Books in Psychology. W. H. Freeman and Company, New York,
  second edition, 1985.

\bibitem{Fenchel49}
W.~Fenchel.
\newblock On conjugate convex functions.
\newblock {\em Canad. J. Math.}, 1:73--77, 1949.

\bibitem{GW74}
R.~E. Greene and H.~Wu.
\newblock Integrals of subharmonic functions on manifolds of nonnegative
  curvature.
\newblock {\em Invent. Math.}, 27:265--298, 1974.

\bibitem{GW76}
R.~E. Greene and H.~Wu.
\newblock {$C\sp{\infty }$} convex functions and manifolds of positive
  curvature.
\newblock {\em Acta Math.}, 137(3-4):209--245, 1976.

\bibitem{GWlinearal1975}
Werner Greub.
\newblock {\em Linear algebra}, volume No. 23 of {\em Graduate Texts in
  Mathematics}.
\newblock Springer-Verlag, New York-Berlin, fourth edition, 1975.

\bibitem{GM69}
Detlef Gromoll and Wolfgang Meyer.
\newblock On complete open manifolds of positive curvature.
\newblock {\em Ann. of Math. (2)}, 90:75--90, 1969.

\bibitem{MR1242973}
P.~M. Gruber and J.~M. Wills, editors.
\newblock {\em Handbook of convex geometry. {V}ol. {A}, {B}}.
\newblock North-Holland Publishing Co., Amsterdam, 1993.

\bibitem{Grunbaum03}
Branko Gr\"unbaum.
\newblock {\em Convex polytopes}, volume 221 of {\em Graduate Texts in
  Mathematics}.
\newblock Springer-Verlag, New York, second edition, 2003.
\newblock Prepared and with a preface by Volker Kaibel, Victor Klee and
  G\"unter M.\ Ziegler.

\bibitem{HDW}
Daniel Hug and Wolfgang Weil.
\newblock {\em Lectures on convex geometry}, volume 286 of {\em Graduate Texts
  in Mathematics}.
\newblock Springer, Cham, 2020.

\bibitem{Jolliffe02}
I.~T. Jolliffe.
\newblock {\em Principal component analysis}.
\newblock Springer Series in Statistics. Springer-Verlag, New York, second
  edition, 2002.

\bibitem{Kalai95}
Gil Kalai.
\newblock Combinatorics and convexity.
\newblock In {\em Proceedings of the {I}nternational {C}ongress of
  {M}athematicians, {V}ol.\ 1, 2 ({Z}\"urich, 1994)}, pages 1363--1374.
  Birkh\"auser, Basel, 1995.

\bibitem{KN79}
George Kempf and Linda Ness.
\newblock The length of vectors in representation spaces.
\newblock In {\em Algebraic geometry ({P}roc. {S}ummer {M}eeting, {U}niv.
  {C}openhagen, {C}openhagen, 1978)}, volume 732 of {\em Lecture Notes in
  Math.}, pages 233--243. Springer, Berlin, 1979.

\bibitem{KN1996vol1}
Shoshichi Kobayashi and Katsumi Nomizu.
\newblock {\em Foundations of differential geometry. {V}ol. {I}}.
\newblock Wiley Classics Library. John Wiley \& Sons, Inc., New York, 1996.
\newblock Reprint of the 1963 original, A Wiley-Interscience Publication.

\bibitem{KZ05}
Andrew~J. Kurdila and Michael Zabarankin.
\newblock {\em Convex functional analysis}.
\newblock Systems \& Control: Foundations \& Applications. Birkh\"auser Verlag,
  Basel, 2005.

\bibitem{Lasserre00}
Jean~B. Lasserre.
\newblock Global optimization with polynomials and the problem of moments.
\newblock {\em SIAM J. Optim.}, 11(3):796--817, 2000/01.

\bibitem{lee1997}
John~M. Lee.
\newblock {\em Riemannian manifolds}, volume 176 of {\em Graduate Texts in
  Mathematics}.
\newblock Springer-Verlag, New York, 1997.
\newblock An introduction to curvature.

\bibitem{Lee2003}
John~M. Lee.
\newblock {\em Introduction to smooth manifolds}, volume 218 of {\em Graduate
  Texts in Mathematics}.
\newblock Springer, New York, second edition, 2013.

\bibitem{LV17}
Yin~Tat Lee and Santosh~S. Vempala.
\newblock Geodesic walks in polytopes.
\newblock In {\em S{TOC}'17---{P}roceedings of the 49th {A}nnual {ACM}
  {SIGACT}, {S}ymposium on {T}heory of {C}omputing}, pages 927--940. ACM, New
  York, 2017.

\bibitem{LV18}
Yin~Tat Lee and Santosh~S. Vempala.
\newblock Convergence rate of {R}iemannian {H}amiltonian {M}onte {C}arlo and
  faster polytope volume computation.
\newblock In {\em S{TOC}'18---{P}roceedings of the 50th {A}nnual {ACM}
  {SIGACT}, {S}ymposium on {T}heory of {C}omputing}, pages 1115--1121. ACM, New
  York, 2018.

\bibitem{Littlewood1938}
J.~E. Littlewood and A.~C. Offord.
\newblock On the {N}umber of {R}eal {R}oots of a {R}andom {A}lgebraic
  {E}quation.
\newblock {\em J. London Math. Soc.}, 13(4):288--295, 1938.

\bibitem{LSCJ17}
Yuanyuan Liu, Fanhua Shang, James Cheng, Hong Cheng, and Licheng Jiao.
\newblock Accelerated first-order methods for geodesically convex optimization
  on riemannian manifolds.
\newblock In I.~Guyon, U.~Von Luxburg, S.~Bengio, H.~Wallach, R.~Fergus,
  S.~Vishwanathan, and R.~Garnett, editors, {\em Advances in Neural Information
  Processing Systems}, volume~30. Curran Associates, Inc., 2017.

\bibitem{MS15}
Diane Maclagan and Bernd Sturmfels.
\newblock {\em Introduction to tropical geometry}, volume 161 of {\em Graduate
  Studies in Mathematics}.
\newblock American Mathematical Society, Providence, RI, 2015.

\bibitem{Merkulov99}
Sergei Merkulov and Lorenz Schwachh\"{o}fer.
\newblock Classification of irreducible holonomies of torsion-free affine
  connections.
\newblock {\em Ann. of Math. (2)}, 150(1):77--149, 1999.

\bibitem{MR1963}
J.~Milnor.
\newblock {\em Morse theory}, volume No. 51 of {\em Annals of Mathematics
  Studies}.
\newblock Princeton University Press, Princeton, NJ, 1963.
\newblock Based on lecture notes by M. Spivak and R. Wells.

\bibitem{NT02}
Yu.\~E. Nesterov and M.~J. Todd.
\newblock On the {R}iemannian geometry defined by self-concordant barriers and
  interior-point methods.
\newblock {\em Found. Comput. Math.}, 2(4):333--361, 2002.

\bibitem{Nesterov00}
Yurii Nesterov.
\newblock Squared functional systems and optimization problems.
\newblock In {\em High performance optimization}, volume~33 of {\em Appl.
  Optim.}, pages 405--440. Kluwer Acad. Publ., Dordrecht, 2000.

\bibitem{NY2018}
Yurii Nesterov.
\newblock {\em Lectures on convex optimization}, volume 137 of {\em Springer
  Optimization and Its Applications}.
\newblock Springer, Cham, second edition, 2018.

\bibitem{pp05}
Cristina~Liliana Pripoae and Gabriel~Teodor Pripoae.
\newblock Differential and affine differential invariants for convexity on
  differentiable manifolds.
\newblock In {\em Proc. Balkan Conference on Oper. Research, Constanta
  May25-28,2005,Bucuresti 2007, 165-170}.

\bibitem{pripoae2013generalized}
Cristina~Liliana Pripoae and Gabriel~Teodor Pripoae.
\newblock Generalized convexity in the affine differential setting.
\newblock In {\em {T}he {I}nternational {C}onference of {D}ifferential
  {G}eometry and {D}ynamical {S}ystems ({DGDS}-2013);10-13 October 2013},
  Bucharest Romania: BSG Proceeding 21., pages 156--166.

\bibitem{Rapcsak91}
T.~Rapcs\'ak.
\newblock Geodesic convexity in nonlinear optimization.
\newblock {\em J. Optim. Theory Appl.}, 69(1):169--183, 1991.

\bibitem{rockafellar-1970a}
R.~Tyrrell Rockafellar.
\newblock {\em Convex analysis}.
\newblock Princeton Landmarks in Mathematics. Princeton University Press,
  Princeton, NJ, 1997.
\newblock Reprint of the 1970 original, Princeton Paperbacks.

\bibitem{s1973}
B.~G. Schmidt.
\newblock Conditions on a connection to be a metric connection.
\newblock {\em Comm. Math. Phys.}, 29:55--59, 1973.

\bibitem{SH15}
Suvrit Sra and Reshad Hosseini.
\newblock Conic geometric optimization on the manifold of positive definite
  matrices.
\newblock {\em SIAM J. Optim.}, 25(1):713--739, 2015.

\bibitem{SVY18}
Suvrit Sra, Nisheeth~K. Vishnoi, and Ozan Y\i~ld\i z.
\newblock On geodesically convex formulations for the {B}rascamp-{L}ieb
  constant.
\newblock In {\em Approximation, randomization, and combinatorial optimization.
  {A}lgorithms and techniques}, volume 116 of {\em LIPIcs. Leibniz Int. Proc.
  Inform.}, pages Art. No. 25, 15. Schloss Dagstuhl. Leibniz-Zent. Inform.,
  Wadern, 2018.

\bibitem{SV17}
Damian Straszak and Nisheeth~K. Vishnoi.
\newblock Real stable polynomials and matroids: optimization and counting.
\newblock In {\em S{TOC}'17---{P}roceedings of the 49th {A}nnual {ACM}
  {SIGACT}, {S}ymposium on {T}heory of {C}omputing}, pages 370--383. ACM, New
  York, 2017.

\bibitem{UC1994}
Constantin Udri\c~ste.
\newblock {\em Convex functions and optimization methods on {R}iemannian
  manifolds}, volume 297 of {\em Mathematics and its Applications}.
\newblock Kluwer Academic Publishers Group, Dordrecht, 1994.

\bibitem{Nisheeth18}
Nisheeth~K. Vishnoi.
\newblock Geodesic convex optimization: Differentiation on manifolds,
  geodesics, and convexity, 2018.

\bibitem{Wiesel12}
Ami Wiesel.
\newblock Geodesic convexity and covariance estimation.
\newblock {\em IEEE Trans. Signal Process.}, 60(12):6182--6189, 2012.

\bibitem{Wu71}
H.~Wu.
\newblock A structure theorem for complete noncompact hypersurfaces of
  nonnegative curvature.
\newblock {\em Bull. Amer. Math. Soc.}, 77:1070--1071, 1971.

\bibitem{WuXingYuan}
Xingyuan Wu, Rong Shao, and Yiran Zhu.
\newblock Jacobi-free and complex-free method for finding simultaneously all
  zeros of polynomials having only real zeros.
\newblock {\em Comput. Math. Appl.}, 46(8-9):1387--1395, 2003.

\bibitem{Yau74}
Shing~Tung Yau.
\newblock Non-existence of continuous convex functions on certain {R}iemannian
  manifolds.
\newblock {\em Math. Ann.}, 207:269--270, 1974.

\bibitem{YWL22}
Ke~Ye, Ken Sze-Wai Wong, and Lek-Heng Lim.
\newblock Optimization on flag manifolds.
\newblock {\em Math. Program.}, 194(1-2):621--660, 2022.

\bibitem{ZS2016}
Hongyi Zhang and Suvrit Sra.
\newblock First-order methods for geodesically convex optimization.
\newblock In Vitaly Feldman, Alexander Rakhlin, and Ohad Shamir, editors, {\em
  29th Annual Conference on Learning Theory}, volume~49 of {\em Proceedings of
  Machine Learning Research}, pages 1617--1638, Columbia University, New York,
  New York, USA, 2016. PMLR.

\bibitem{ZW13}
Teng Zhang, Ami Wiesel, and Maria~Sabrina Greco.
\newblock Multivariate generalized {G}aussian distribution: convexity and
  graphical models.
\newblock {\em IEEE Trans. Signal Process.}, 61(16):4141--4148, 2013.

\end{thebibliography}

\end{document}